\numberwithin{equation}{section}
\newtheorem{theorem}{Theorem}[section]
\newtheorem{lemma}[theorem]{Lemma}
\newtheorem{remark}{Remark}[section]
\providecommand{\keywords}[1]
{
  \small	
  \textbf{\textit{Keywords:}} #1
}
\newcommand{\MSC}[1]{%
  \small
  \textbf{\textit{Mathematics Subject Classification:}} #1
}
\title{A Sharp Global Boundedness Result for Keller–Segel–(Navier–)Stokes Systems with Rapid Diffusion and Saturated Sensitivities  }
\author{ 
      Minh Le\thanks{Institute for Theoretical Sciences, Westlake University,  China \texttt{(leminh@westlake.edu.cn)}} 
    }
\date{}
\begin{document}
\maketitle

\begin{abstract}
We investigate the Keller--Segel--(Navier--)Stokes system posed in a smooth bounded domain
\(\Omega \subset \mathbb{R}^N\) with \(N = 2,3\):
\begin{equation*}
\begin{cases}
n_t + u \cdot \nabla n = \Delta n - \nabla \cdot \big( n S(n)\nabla c \big), \\[2mm]
u \cdot \nabla c = \Delta c - c + n, \\[2mm]
u_t + \kappa (u \cdot \nabla) u = \Delta u - \nabla P + n \nabla \phi, \\[2mm]
\nabla \cdot u = 0,
\end{cases}
\end{equation*}
where \(\kappa \in \left \{0,1 \right \} \), the given gravitational potential \(\phi \in W^{2, \infty}(\Omega)\), and the
chemotactic sensitivity function \(S \in C^2([0,\infty))\).

Under no-flux boundary conditions for \(n\) and \(c\), together with the Dirichlet boundary
condition for \(u\), we show that, provided the initial data satisfy suitable regularity
assumptions, the following results hold:
\begin{itemize}
    \item If \(N = 2\), \(\kappa = 1\), and the sensitivity function satisfies
    \(\lim_{\xi \to \infty} S(\xi) = 0\), then the Keller--Segel--Navier--Stokes system admits
    a global classical solution that remains uniformly bounded in time.

    \item If \(N = 3\), \(\kappa = 0\), and \(S\) satisfies
    \[
    |S(\xi)| \le K_S (\xi + 1)^{-\alpha} \quad \text{for all } \xi \ge 0,
    \]
    with some constants \(K_S > 0\) and \(\alpha > \frac{1}{3}\), then the
    Keller--Segel--Stokes system possesses a global bounded classical solution.
\end{itemize}
Our results are optimal, since it is well established that, in the absence of
fluid effects, blow-up can occur when
$S \equiv \mathrm{const}$ in two dimensions, or when $\alpha < \tfrac{1}{3}$
in three dimensions.

\end{abstract}
\keywords{Chemotaxis, Stokes, Navier-Stokes, Boundedness}\\
\MSC{35B35, 35K45, 35K55, 92C15, 92C17}

\numberwithin{equation}{section}

\newtheorem{Corollary}{Corollary}[theorem]
\allowdisplaybreaks

\section{Introduction}
Chemotaxis, which describes the directed movement of cells or microorganisms in response to chemical stimuli, has been intensively studied from a mathematical perspective since the seminal model was introduced in the 1970s \cite{Keller}
\begin{equation} \label{KS}
    \begin{cases}
        n_t = \Delta n - \nabla \cdot (n \nabla c), \\
        \gamma c_t = \Delta c - c + n,
    \end{cases}
\end{equation}
where $\gamma \in \{0,1\}$.  One of the most striking features of this system, which has attracted considerable attention from the mathematical community, is the {critical mass phenomenon}. In two spatial dimensions, if the initial mass of the cell density lies below a certain threshold, solutions remain global and bounded \cite{Dolbeault, Dolbeault1, NSY}. In contrast, when this mass exceeds the threshold, solutions may blow up in finite time \cite{Nagai1, Nagai2, Nagai3, Nagai4}. This phenomenon, however, does not persist in higher-dimensional settings. Indeed, it was shown in \cite{Winkler-2010, Winkler-2013} that in dimensions $N \geq 2$, there exist initial data with arbitrarily small mass that nevertheless lead to blow-up.

Let us now shift our attention to a variation of the system~\eqref{KS}, given by
\begin{equation} \label{KS'}
    \begin{cases}
        n_t = \Delta n - \nabla \cdot \bigl(n(1+n)^{-\alpha} \nabla c\bigr), \\
        \gamma c_t = \Delta c - c + n,
    \end{cases}
\end{equation}
where $\alpha \in \mathbb{R}$. This model also exhibits a criticality phenomenon; however, in contrast to \eqref{KS}, the critical parameter is not the total mass but the exponent $\alpha$. 
More precisely, it has been shown that the value
\[
\alpha^* = 1 - \frac{2}{N}
\]
is critical in the following sense: if $\alpha > \alpha^*$, then solutions to system~\eqref{KS'} are global and bounded \cite{Winkler-Horstmann, Winkler-2011}, whereas if $\alpha < \alpha^*$, there exist solutions that become unbounded when $\Omega$ is a ball in $\mathbb{R}^N$ with $N \geq 2$ \cite{CieslakStinner2015, Winkler2010VolumeFilling}.

When microorganisms or cells inhabit a fluid environment, the mathematical modeling of chemotactic phenomena becomes substantially more intricate due to their interaction with the surrounding fluid. Such interactions may involve buoyancy effects, gravitational forces, and transport mechanisms induced by the fluid flow. Motivated by the seminal work of Tuval et al.~\cite{TuvalEtAl2005}, we consider a chemotaxis--fluid system posed in a bounded domain $\Omega \subset \mathbb{R}^N$, where $N \in \{2,3\}$:

\begin{equation}  \label{1} 
    \begin{cases}
        n_t + u \cdot \nabla n = \Delta n - \nabla \cdot \left( n S(n)\nabla c \right), \qquad &x\in \Omega, t>0,\\
       u\cdot \nabla c = \Delta c - c + n, \qquad &x\in \Omega, t>0,\\
        u_t + \kappa ( u \cdot \nabla ) u +\nabla P = \Delta u + n \nabla \phi ,\quad \nabla \cdot u =0,\qquad &x\in \Omega, t>0,
    \end{cases}
\end{equation}
 In this system, $n=n(x,t)$, $c=c(x,t)$, $u = u(x,t)$ and $P = P(x,t)$ denote the cell density. chemical concentration, the fluid velocity and pressure, respectively.
Here $S\in C^2 \left [0, \infty \right )$ such that 
\begin{equation}\label{S}
    \begin{cases}
        \lim_{\xi \to \infty}S(\xi) =0 ,  &\text{ if }N=2,\\
         |S(\xi)| \leq K_S (\xi+1)^{-\alpha} \qquad \text{for all }\xi \geq 0,  &\text{ if }N=3,
    \end{cases}
\end{equation}
for some $K_S>0$ and $\alpha>0$, and the given gravitational potential function $\phi$ fulfills 
\begin{align} \label{phi}
    \phi \in W^{2, \infty}(\Omega).
\end{align}
 The system \eqref{1} satisfies the Neumann-Neumann-Dirichlet boundary conditions
\begin{equation} \label{bdry}
    \frac{\partial n}{\partial \nu} =0, \quad \frac{\partial c}{\partial \nu} =0, \quad \text{and } u=0 ,\qquad x \in \partial \Omega, t>0,
\end{equation}
and initial conditions 
\begin{align*}
    n(x,0) =n_0(x), \qquad u(x,0) =u_0(x) \qquad \text{for all } x \in \Omega,
\end{align*}
with 
\begin{equation} \label{initial}
    \begin{cases}
        n_0 \in C^{0} (\bar{\Omega}) &\text{ with } n_0\geq 0   \\
      u_0 \in D(A^\beta) &\text{with } \beta \in \left ( \frac{3}{4}, 1\right ) \text{when }n=3 \quad \text{and }  \beta \in \left ( \frac{1}{2}, 1\right ) \text{when }n=2,
    \end{cases}
\end{equation}
where $A := -\mathcal{P}\Delta$ denotes the Stokes operator on 
\[
L^2_\sigma(\Omega) := \left\{ \phi \in L^2(\Omega; \mathbb{R}^N) \,\middle|\, \nabla \cdot \phi = 0 \right\},
\]
with $\mathcal{P}$ representing the Helmholtz projection from $L^2(\Omega;\mathbb{R}^N)$ into $L^2_\sigma(\Omega)$.

Let us briefly review several results in the literature concerning system~\eqref{1}
in the two-dimensional setting.

Assuming that
\[
|S(\xi)| \le C(\xi+1)^{-\alpha} \quad \text{for all } \xi \ge 0,
\]
with some constants $C>0$ and $\alpha>0$, the existence of globally bounded
solutions to the simplified fully parabolic Keller--Segel--Stokes system (with
$\kappa = 0$) was established in \cite{WangXiang2015}. Closely related results on
global existence and boundedness for the fully parabolic
Keller--Segel--Navier--Stokes system~\eqref{1} were later obtained in
\cite{WangWinklerXiang2018} and \cite{LiZheng2022}. In the particular case $S \equiv 1$, it was shown in \cite{Winkler_SIAM} that if
\(
\int_\Omega n_0 < 2\pi,
\)
then system~\eqref{1} admits a global generalized solution which eventually
becomes smooth and converges to a constant steady state. By contrast, if
\(
\int_\Omega n_0 \in (4\pi, \infty)\setminus \{4k\pi \mid k \in \mathbb{N}\},
\)
unbounded solutions were shown to exist for $u_0 \equiv 0$ and
$\nabla \phi \equiv 0$; see \cite{HorstmannWang2001_blowup}. In order to circumvent
this critical-mass phenomenon, the authors in \cite{WT2023} proved that, under
the optimal condition
\[
\lim_{\xi \to \infty} S(\xi) = 0,
\]
the fully parabolic chemotaxis--Navier--Stokes system admits globally bounded
solutions.

In contrast, research on the parabolic--elliptic case remains relatively scarce. For instance, it was shown in \cite{Winkler+2020} that system~\eqref{1}
possesses global bounded solutions when the chemotactic sensitivity is repulsive
($S \equiv -1$). Furthermore, \cite{Zheng2021} established the existence of
global classical bounded solutions for the simplified Keller--Segel--Stokes
system under the condition
\[
|S(\xi)| \le C(\xi+1)^{-\alpha} \quad \text{for all } \xi \ge 0,
\]
with some $C>0$ and $\alpha>0$. In the special case where $S \equiv 1$,
$\phi = c$, and $\Omega = \mathbb{R}^2$, it was demonstrated in \cite{GongHe2021}
that solutions to the parabolic--elliptic system exist globally in time provided
that \(
\int_\Omega n_0 < 8\pi.
\)
For additional results on global solvability in a wide range of variants of
system~\eqref{1}, we refer the interested reader to
\cite{Cao_2016, Kozono_2016, Duan_2010, Wang2023, DaiLiu}.
These aforementioned findings naturally motivate the investigation of sharp conditions on the
sensitivity function $S$ that guarantee global existence and boundedness of
solutions to the parabolic--elliptic system~\eqref{1}. 

Accordingly, the first main objective of the present work is to establish global
solvability and boundedness for system~\eqref{1}, in close analogy with the fully
parabolic framework studied in \cite{WT2023}. More precisely, we formulate the
following theorem.

\begin{theorem} \label{thm1}
    Let $\kappa=0$, $\Omega \subset \mathbb{R}^2$ be a bounded domain with smooth boundary and assume that \eqref{S} and \eqref{phi} hold. Then for any choice of $n_0$ and $u_0$ fulfilling \eqref{initial}, the problem \eqref{1} under the boundary conditions \eqref{bdry} possesses a global classical solution $(u,c,u,P)$ satisfying 
    \begin{equation*}
        \begin{cases}
            n \in C^0 \left (\bar{\Omega} \times [0,\infty) \right ) \cap C^{2,1} \left ( \bar{\Omega} \times (0,\infty) \right ), \\
            c \in C^{2,0}\left ( \bar{\Omega} \times (0,\infty) \right ) , \\
            u \in C^0 \left ([0,\infty); D(A^\beta)  \right ) \cap C^{2,1}\left ( \bar{\Omega} \times (0,\infty); \mathbb{R}^2 \right ) \quad \text{and }\\
            P \in C^{1,0} \left ( \bar{\Omega}\times (0,\infty) \right ),
        \end{cases}
    \end{equation*}
    as well as $n$ and $c$ are positive in $\bar{\Omega}\times (0, \infty)$. Moreover, the solution is globally bounded in the sense that 
    \begin{align} \label{thm1-1}
    \sup_{t>0} \left \{    \left \| n(\cdot,t) \right \|_{L^\infty(\Omega)} +    \left \| c(\cdot,t) \right \|_{W^{1,\infty}(\Omega)} +  \left \| u(\cdot,t) \right \|_{L^\infty(\Omega)} \right \} < \infty. 
    \end{align}
\end{theorem}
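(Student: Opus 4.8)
I propose to follow the standard program for chemotaxis--fluid systems: prove local existence together with an extensibility criterion, derive a chain of time-uniform a priori estimates in which the hypothesis $\lim_{\xi\to\infty}S(\xi)=0$ plays the decisive role, bootstrap these up to an $L^\infty$ bound for $n$, and finally invoke parabolic and elliptic regularity theory. First I would record local well-posedness, obtained by a contraction argument for the pair $(n,u)$ in a suitable subspace of $C^0([0,T);C^0(\bar\Omega))\times C^0([0,T);D(A^\beta))$, with $c=c[n,u]$ recovered at each time from the elliptic problem $-\Delta c+c+u\cdot\nabla c=n$ under Neumann data (using $D(A^\beta)\hookrightarrow L^\infty(\Omega)$ for $\beta>\tfrac12$), yielding a maximal time $T_{\max}\in(0,\infty]$ such that $T_{\max}<\infty$ forces $\|n(\cdot,t)\|_{L^\infty(\Omega)}+\|A^\beta u(\cdot,t)\|_{L^2(\Omega)}\to\infty$ as $t\uparrow T_{\max}$. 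Testing the first equation against $1$ and using $\nabla\cdot u=0$, $u|_{\partial\Omega}=0$ gives mass conservation $\|n(\cdot,t)\|_{L^1(\Omega)}=\|n_0\|_{L^1(\Omega)}=:m$; integrating the second equation gives $\|c(\cdot,t)\|_{L^1(\Omega)}=m$; and testing the second equation against $c$ gives $\|\nabla c(\cdot,t)\|_{L^2(\Omega)}^2+\|c(\cdot,t)\|_{L^2(\Omega)}^2=\int_\Omega nc\le\|n(\cdot,t)\|_{L^2(\Omega)}^2$, so in particular $\|c(\cdot,t)\|_{L^2(\Omega)}^2\le\|n(\cdot,t)\|_{L^2(\Omega)}^2$ and $\|\nabla c(\cdot,t)\|_{L^2(\Omega)}^2\le\tfrac12\|n(\cdot,t)\|_{L^2(\Omega)}^2$.

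The crux is an entropy-type estimate. Testing the $n$-equation against $\ln n$, writing $H(\xi):=\int_0^\xi S(s)\,ds$, and substituting $\Delta c=u\cdot\nabla c+c-n$ from the second equation yields
\begin{align*}
\frac{d}{dt}\int_\Omega n\ln n+\int_\Omega\frac{|\nabla n|^2}{n}
&=\int_\Omega S(n)\,\nabla n\cdot\nabla c=-\int_\Omega H(n)\,\Delta c\\
&=\int_\Omega H(n)\,n-\int_\Omega H(n)\,c-\int_\Omega H(n)\,u\cdot\nabla c .
\end{align*}
Since $S(\xi)\to0$, for every $\eta>0$ there is $C_\eta>0$ with $|H(\xi)|\le\eta\xi+C_\eta$ for all $\xi\ge0$, so the first two terms on the right are bounded by $2\eta\int_\Omega n^2+C_\eta m$ (using $\int_\Omega nc\le\int_\Omega n^2$); then the two-dimensional Gagliardo--Nirenberg inequality applied to $\sqrt n$, namely $\int_\Omega n^2\le C_{\mathrm{GN}}\,m\int_\Omega\tfrac{|\nabla n|^2}{n}+C_{\mathrm{GN}}m^2$, lets $\eta\int_\Omega n^2$ be absorbed into the dissipation once $\eta$ is small relative to $1/(C_{\mathrm{GN}}m)$. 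This is exactly where the saturation hypothesis removes the two-dimensional critical-mass obstruction: if $S$ were a nonzero constant the coefficient $\eta$ could not be made small and the familiar smallness-of-mass restriction would reappear. Pairing this with the energy identity for the Stokes equation tested against $u$ — which, since $\nabla\phi\in L^\infty(\Omega)$ and $N=2$, gives $\tfrac{d}{dt}\|u\|_{L^2}^2+\|\nabla u\|_{L^2}^2\le\varepsilon\int_\Omega n^2+C_\varepsilon$ — I would study a coupled functional of the form $\int_\Omega n\ln n+a\|u\|_{L^2}^2$, and, after absorbing the remaining cross terms (in particular $\int_\Omega H(n)\,u\cdot\nabla c$, the delicate one) into the two dissipation terms, arrive at a differential inequality $y'+\delta y\le C$, hence the fundamental bounds $\sup_t\big(\int_\Omega n\ln n+\|u(\cdot,t)\|_{L^2(\Omega)}^2\big)<\infty$ and $\sup_t\int_t^{t+1}\big(\|\nabla\sqrt{n}\|_{L^2(\Omega)}^2+\|\nabla u\|_{L^2(\Omega)}^2\big)\,ds<\infty$.

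The remainder is a bootstrap that propagates these bounds upward while keeping everything uniform in $t$. Since $\sqrt n$ is bounded in $H^1(\Omega)$ on unit time intervals, $\int_t^{t+1}\|n(\cdot,s)\|_{L^q(\Omega)}\,ds$ is finite for every $q<\infty$ in two dimensions; combining this with smoothing estimates for the Stokes semigroup and with elliptic regularity for the slaved equation written as $-\Delta c+c=n-\nabla\cdot(uc)$ (so that $\nabla c=\nabla(-\Delta+1)^{-1}n-\nabla(-\Delta+1)^{-1}\nabla\cdot(uc)$, the second operator being of order zero) first promotes $n$ to $\sup_t\|n(\cdot,t)\|_{L^2(\Omega)}<\infty$. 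Feeding this into the representation $u(t)=e^{-tA}u_0+\int_0^te^{-(t-s)A}\mathcal P(n\nabla\phi)\,ds$ and using $\beta<1$ together with the exponential decay of $\|A^\beta e^{-sA}\|$ gives $\sup_t\|A^\beta u(\cdot,t)\|_{L^2(\Omega)}<\infty$, hence $u\in L^\infty(\Omega\times(0,\infty))$ and $u$ Hölder continuous; elliptic regularity then yields $\sup_t\|c(\cdot,t)\|_{W^{2,2}(\Omega)}<\infty$ and, via $W^{2,2}\hookrightarrow W^{1,q}$, uniform bounds on $\nabla c$ in every $L^q(\Omega)$, $q<\infty$. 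A further testing of the first equation against powers $n^{p-1}$ — with the chemotactic term now controlled because $S$ is bounded and $\nabla c$ lies in every $L^q$ — propagates the $L^p$ bound to every $p<\infty$, whence $\nabla c\in L^\infty$ and, by a Moser--Alikakos iteration for $n_t=\Delta n-\nabla\cdot\big(n(S(n)\nabla c+u)\big)$, $\sup_t\|n(\cdot,t)\|_{L^\infty(\Omega)}<\infty$. This contradicts $T_{\max}<\infty$ via the extensibility criterion, so $T_{\max}=\infty$; parabolic and elliptic Schauder theory upgrade the solution to the stated regularity classes, the strong maximum principle gives positivity of $n$ and $c$, and \eqref{thm1-1} is the collection of the uniform estimates already proved.

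I expect the main obstacle to be the entropy step, specifically the coupling with the fluid: the term $\int_\Omega H(n)\,u\cdot\nabla c$ is not amenable to the smallness of $S$ alone, and closing the coupled differential inequality requires a careful balance of the dissipation $\int_\Omega\tfrac{|\nabla n|^2}{n}$ and $\|\nabla u\|_{L^2}^2$ against products such as $\|n\|_{L^4}\|u\|_{L^4}\|\nabla c\|_{L^2}$ — perhaps after a preliminary improvement of the regularity of $u$, or by exploiting the elliptic structure of the $c$-equation more sharply (maximal Sobolev regularity, duality). A secondary but pervasive point of care is that every estimate must be obtained uniformly in $t$, not merely locally, so that the concluding bound \eqref{thm1-1} is genuinely global in time.
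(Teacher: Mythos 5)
Your overall architecture (local existence, entropy-type estimate exploiting $\lim_{\xi\to\infty}S(\xi)=0$, bootstrap to $L^p$, Moser iteration, regularity theory) matches the general program, but the core a priori step is not actually closed, and you say so yourself: the term $\int_\Omega H(n)\,u\cdot\nabla c$ in the $\int_\Omega n\ln n$ identity is left unresolved, with only a hope that ``a preliminary improvement of the regularity of $u$'' or ``maximal Sobolev regularity, duality'' might save it. This is not a technicality; it is the central difficulty of the theorem. In your coupled functional $y=\int_\Omega n\ln n+a\|u\|_{L^2}^2$ the only dissipations available are $\int_\Omega|\nabla n|^2/n$ and $\|\nabla u\|_{L^2}^2$, while estimating $\int_\Omega H(n)\,u\cdot\nabla c$ (even after $|H(\xi)|\le\eta\xi+C_\eta$ and $\|\nabla c\|_{L^2}\le\|n\|_{L^2}$) produces, via Gagliardo--Nirenberg in 2D, products of the form $\|u\|_{L^2}^{1/2}\|\nabla u\|_{L^2}^{1/2}\|\nabla\sqrt n\|_{L^2}^{5/2}$ (up to lower-order terms): a superquadratic expression in the dissipations whose coefficient involves $\|u\|_{L^2}$, which is itself part of the unknown $y$. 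Young's inequality then leaves you with terms of the type $h(t)\,y(t)$ where $h$ is not known to be integrable at this stage, so the target inequality $y'+\delta y\le C$ does not follow; this is exactly the obstruction the paper's introduction calls the first and second difficulties.

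The paper resolves it with a two-tier structure you do not have. First it studies $F(t)=-\int_\Omega\log(n+B)$: for this functional the fluid transport term vanishes identically (since $\nabla\cdot u=0$ and $u=0$ on $\partial\Omega$), and the chemotactic term is absorbed using the smallness of $\xi|S(\xi)|/(\xi+B)$ together with Moser--Trudinger inequalities, yielding the purely spatio-temporal bounds $\int_t^{t+\tau}\int_\Omega n\log(n+1)\le C$ and $\int_t^{t+\tau}\int_\Omega|\nabla n|^2/(n+1)^2\le C$ with no information on $u$ required. These averaged bounds, fed through a second Moser--Trudinger application to $\int_\Omega n|u_i|$ and an ODE lemma for inequalities with time-averaged right-hand sides, give $\sup_t\|u\|_{L^2}$ and $\int_t^{t+\tau}\int_\Omega|u|^4\le C$. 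Only then is the $\int_\Omega n\log n$ functional used, and the fluid term is controlled indirectly through the $\Delta c$ estimate $\int_\Omega|\Delta c|^2\le 4\int_\Omega n^2+C(\int_\Omega|u|^4)\int_\Omega n\log(n+1)+C\int_\Omega|u|^4$, so that the resulting differential inequality has the Gronwall-compatible form $y'\le h(t)y+g(t)$ with $h,g$ having bounded unit-interval integrals. Without this (or an equivalent device) your entropy step, and hence everything downstream of it, is unsupported. Two minor remarks: the bound $\frac{d}{dt}\|u\|_{L^2}^2+\|\nabla u\|_{L^2}^2\le\varepsilon\int_\Omega n^2+C_\varepsilon$ cannot hold with arbitrarily small $\varepsilon$ (Poincar\'e absorption forces $\varepsilon$ of fixed size, which is tolerable but should be stated correctly); and the paper's Section 3 in fact treats the Navier--Stokes case $\kappa=1$ (the convective term appears in the $W^{1,2}$ and $A^\beta$ estimates for $u$), so if the intended statement is $\kappa=1$ your Duhamel formula for $u$ must include $\mathcal P[(u\cdot\nabla)u]$ as well.
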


\begin{remark}
   Theorem \ref{thm1} extends and strengthens the results obtained in
\cite{Zheng2021}, where comparable boundedness results were established only for the
two-dimensional Keller--Segel--Stokes system. 
\end{remark}

The proof of Theorem~\ref{thm1} follows a well-established strategy commonly
employed in chemotaxis--fluid systems. The approach consists of several
steps: first, one derives a uniform-in-time bound for $\int_\Omega |u|^2$;
second, a corresponding bound for $\int_\Omega n \log(n+1)$ is obtained. These
estimates then allow us to establish $L^p$ bounds for both $u$ and $n$, which
can subsequently be combined with a Moser iteration procedure to deduce an
$L^\infty$ bound for $n$.

However, there are several challenges inherent in this approach. The first
difficulty lies in obtaining a uniform-in-time $L^2$ bound for $u$, which
requires a priori control of the quantity
\[
\int_t^{t+\tau} \int_\Omega n \log(n+1), \qquad \text{where } 
\tau = \min \Bigl\{1, \frac{T_{\rm max}}{2} \Bigr\}, 
\]
and $T_{\rm max} \in (0, \infty]$ denotes the maximal existence time
specified in Lemma~\ref{Local}. To overcome this obstacle, we are inspired by
the method developed in \cite{WT2023}, which involves considering the functional
\[
F(t) = - \int_\Omega  \log(n+B), \qquad t>0,
\]
for some $B>0$, and applying Moser--Trudinger type inequalities (see
Lemmas~\ref{Moser-Trudinger1} and~\ref{Moser-Trudinger2}).

The second difficulty arises when establishing a uniform-in-time $L \log L$
bound for $n$. Direct substitution of the second equation into the functional
\[
G(t) = \int_\Omega n \log n, \qquad t>0,
\]
leads to the term $\int_\Omega n S(n) u \cdot \nabla c$, which cannot be
directly controlled. Remarkably, an indirect control using $\Delta c$ (see Lemma \ref{LM}) together
with Young's inequality allows us to overcome this obstacle.

Let us turn our attention to some existing results in the literature in the three-dimensional setting ($N=3$).

In \cite{YZ2016}, the authors demonstrated that the fully parabolic Keller--Segel--Stokes system
admits a globally bounded classical solution in three dimensions provided that
$\alpha > \tfrac{1}{2}$. When compared with the corresponding fluid-free case,
this result is not sharp, since it is known that blow-up may occur when
$\alpha < \tfrac{1}{3}$. Subsequently, it was demonstrated in
\cite{Winkler-2018} that the inclusion of fluid interaction does not influence
the global solvability and boundedness of solutions. More precisely, the author
proved that the condition $\alpha > \tfrac{1}{3}$ is sufficient to guarantee the
existence of global bounded solutions, thereby closing this gap in the existing
theory.

It is therefore natural to raise the following related question that whether the
condition $\alpha > \tfrac{1}{3}$ can also prevent blow-up for the
parabolic--elliptic system~\eqref{1}. In \cite{PJ2022}, the authors partially
addressed this issue by showing that the stronger condition $\alpha >
\tfrac{1}{2}$ ensures the global existence and boundedness of solutions to
system~\eqref{1}. The second objective of the present paper is to provide a
complete answer to this question. More precisely, we establish the global
existence and boundedness of solutions to system~\eqref{1} under the optimal
assumption $\alpha > \tfrac{1}{3}$, as stated in the following theorem.

\begin{theorem} \label{thm2}
  Let $\kappa = 0$, and let $\Omega \subset \mathbb{R}^3$ be a bounded domain with smooth boundary. Assume that \eqref{phi} holds and that $S \in C^2([0,\infty))$ satisfies condition~\eqref{S} with $\alpha > \frac{1}{3}$. Then, for any initial data $n_0$ and $u_0$ fulfilling \eqref{initial}, the problem \eqref{1} under the boundary conditions \eqref{bdry} possesses a global classical solution $(n, c, u, P)$ satisfying
\begin{equation*}
    \begin{cases}
        n \in C^0(\bar{\Omega} \times [0,\infty)) \cap C^{2,1}(\bar{\Omega} \times (0,\infty)), \\
        c \in C^{2,0}(\bar{\Omega} \times (0,\infty)), \\
        u \in C^0([0,\infty); D(A^\beta)) \cap C^{2,1}(\bar{\Omega} \times (0,\infty); \mathbb{R}^3), \\
        P \in C^{1,0}(\bar{\Omega} \times (0,\infty)),
    \end{cases}
\end{equation*}
with $n$ and $c$ remaining positive in $\bar{\Omega} \times (0, \infty)$. Moreover, there exists a constant $C > 0$ such that
\begin{align} \label{thm2-1}
    \| n(\cdot,t) \|_{L^\infty(\Omega)} + \| c(\cdot,t) \|_{W^{1,\infty}(\Omega)} + \| u(\cdot,t) \|_{L^\infty(\Omega)} \leq C \quad \text{for all } t > 0.
\end{align}

\end{theorem}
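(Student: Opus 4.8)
\noindent The proof of Theorem~\ref{thm2} should proceed along the same broad scheme as in two dimensions (mass bound $\to$ energy-type bound $\to$ $L^p$-bounds $\to$ Moser iteration), but the three-dimensional setting and the sharpness of the exponent make the fluid coupling considerably more delicate. By the local existence statement (Lemma~\ref{Local}) there is a unique maximal classical solution $(n,c,u,P)$ on $[0,T_{\max})$, carrying the extensibility criterion that $T_{\max}<\infty$ forces $\|n(\cdot,t)\|_{L^\infty(\Omega)}+\|u(\cdot,t)\|_{D(A^{\beta})}\to\infty$ as $t\uparrow T_{\max}$, while the stated regularity and positivity of the solution follow in the usual way once such a bound is available. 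Hence the entire task reduces to establishing
\[
\sup_{t\in[0,T_{\max})}\Bigl(\|n(\cdot,t)\|_{L^\infty(\Omega)}+\|u(\cdot,t)\|_{D(A^{\beta})}\Bigr)<\infty
\]
with a bound independent of $T_{\max}$; this forces $T_{\max}=\infty$ and, through elliptic regularity for the second equation, yields \eqref{thm2-1}. I would work throughout on $[0,T_{\max})$ with all constants independent of $T_{\max}$, in four stages. In the first, integrating the first equation and using $\nabla\cdot u=0$ gives the conserved mass $\int_\Omega n(\cdot,t)=\int_\Omega n_0=:m$, and the parabolic and elliptic maximum principles give $n,c>0$; moreover, since $\kappa=0$ the third equation is linear, so the variation-of-constants formula for the Stokes semigroup together with the smoothing estimates $\|A^{\theta}e^{-\tau A}\mathcal P\,g\|_{L^q}\le C\,\tau^{-\theta-\frac N2(\frac1\rho-\frac1q)}e^{-\mu\tau}\|g\|_{L^\rho}$ applied to the forcing $n\nabla\phi$ shows that control of $\sup_{s\le t}\|n(\cdot,s)\|_{L^\rho}$ for suitable $\rho$ transfers to control of $u$ in strong norms --- in particular a uniform $L^2(\Omega)$-bound for $n$ will give a uniform $L^\infty(\Omega)$-bound for $u$, and then, by elliptic regularity for $-\Delta c+c=n-u\cdot\nabla c$ (after disposing of the drift $u\cdot\nabla c$), uniform control of $c$ in $H^2(\Omega)$ and hence of $\nabla c$ in $L^6(\Omega)$.

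The second and decisive stage is a uniform $L^2(\Omega)$-bound for $n$, and this is where the hypothesis $\alpha>\tfrac13$ is genuinely used. Testing the first equation by $n$ (the transport term dropping out), integrating the chemotactic term by parts, and substituting $\Delta c=c-n+u\cdot\nabla c$ from the second equation, one obtains, with $\Phi(s)=\int_0^s\sigma S(\sigma)\,d\sigma$ satisfying $|\Phi(s)|\le C(s+1)^{2-\alpha}$,
\[
\tfrac12\tfrac{d}{dt}\!\int_\Omega n^2+\int_\Omega|\nabla n|^2=\int_\Omega\Phi(n)\,n-\int_\Omega\Phi(n)\,c-\int_\Omega\Phi(n)\,u\cdot\nabla c .
\]
The first term is bounded by $C\int_\Omega n^{3-\alpha}+C$ and is absorbed into $\int_\Omega|\nabla n|^2$ via the Gagliardo--Nirenberg inequality in $\mathbb{R}^3$ under the mass constraint; the admissibility of this interpolation reduces precisely to $\alpha>\tfrac13$, which is exactly the borderline mechanism responsible for the blow-up threshold $\alpha=1-\tfrac2N=\tfrac13$ in the fluid-free parabolic--elliptic model~\eqref{KS'}. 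The second term is of lower order and is absorbed by an elementary H\"older and Gagliardo--Nirenberg argument. The genuinely new term is the third one, $\int_\Omega\Phi(n)\,u\cdot\nabla c$; one more integration by parts using $\nabla\cdot u=0$ rewrites it as $-\int_\Omega c\,u\cdot\nabla\Phi(n)=-\int_\Omega c\,nS(n)\,u\cdot\nabla n$, which by Young's inequality is dominated by $\varepsilon\int_\Omega|\nabla n|^2$ plus a term that is handled through the $L^\infty$-bound for $u$ and the $H^2$-bound for $c$ supplied by the first stage. Since those fluid and $c$ bounds themselves depend on $\|n(\cdot,t)\|_{L^2}$, the estimate must be closed by a carefully organized bootstrap/continuity argument: one follows $z(t):=\int_\Omega n^2$ along with the control of $u$ and $c$ it induces, and checks that the superquadratic dissipation $\int_\Omega|\nabla n|^2\ge c\,z^{5/3}-C$ (Gagliardo--Nirenberg again, with the mass constraint) controls the feedback generated by the chemotactic and fluid contributions, so that $z$ remains uniformly bounded. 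Reconciling the sharp interpolation with this fluid coupling inside one closed differential inequality is what I expect to be the central technical obstacle, and it is precisely what should replace the suboptimal condition $\alpha>\tfrac12$ of \cite{PJ2022} by the optimal $\alpha>\tfrac13$.

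With $\sup_t\|n(\cdot,t)\|_{L^2}<\infty$ in hand (hence uniform bounds for $u$ in $L^\infty(\Omega)$ and for $\nabla c$ in $L^6(\Omega)$), the third stage is a now-standard bootstrap in the integrability exponent: testing the first equation by $n^{p-1}$ and iterating over an increasing sequence of exponents $p$, one controls the chemotactic term by Young's inequality together with the available bounds for $u$ and $\nabla c$ and a Gagliardo--Nirenberg interpolation whose admissibility condition relaxes as $p$ grows and stays satisfied because $\alpha>\tfrac13$, while the fluid and $c$ bounds are simultaneously upgraded through the Stokes semigroup and elliptic regularity; this produces $\sup_t\|n(\cdot,t)\|_{L^p}<\infty$ for every finite $p$, whence $\sup_t\|\nabla c(\cdot,t)\|_{L^\infty}<\infty$ and $\sup_t\|u(\cdot,t)\|_{D(A^{\beta})}<\infty$. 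The fourth stage is the familiar Moser iteration for the first equation: with $u$ bounded and $|nS(n)\nabla c|\le K_S(n+1)^{1-\alpha}\|\nabla c\|_{L^\infty}$, testing by $n^{p_k-1}$ with $p_k\to\infty$ and invoking the $L^p$-bounds from the third stage yields $\sup_{t<T_{\max}}\|n(\cdot,t)\|_{L^\infty}<\infty$. Together with the $D(A^{\beta})$-bound this contradicts the extensibility criterion unless $T_{\max}=\infty$, and the estimate \eqref{thm2-1} is then immediate. The only genuinely novel and delicate point in this program is the mutual dependence, in the second stage, between the $L^2$-bound for $n$ and the fluid bounds; the remaining stages amount to careful bookkeeping of exponents.
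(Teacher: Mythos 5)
Your overall skeleton (reduce to a priori bounds via the extensibility criterion, then Stokes smoothing, elliptic regularity for $c$, and Moser iteration) matches the paper, and several of your individual computations are right: the term $\int_\Omega \Phi(n)\,n\lesssim \int_\Omega n^{3-\alpha}$ is indeed absorbable by $\int_\Omega|\nabla n|^2$ under the mass constraint exactly when $\alpha>\tfrac13$, and $\int_\Omega|\nabla n|^2\ge c\,z^{5/3}-C$ with $z=\int_\Omega n^2$. But your decisive Stage 2 has a genuine gap: you estimate the fluid term $-\int_\Omega c\,nS(n)\,u\cdot\nabla n$ using the $L^\infty$-bound for $u$ and the $H^2$-bound for $c$, both of which you obtain \emph{from} $\sup_s\|n(\cdot,s)\|_{L^2}$, and you leave the resulting circularity to an unspecified ``bootstrap/continuity argument.'' This is not mere bookkeeping: quantitatively, $\|u\|_{L^\infty}\lesssim 1+(\sup z)^{1/2}$ and $\|c\|_{L^\infty}\lesssim\|c\|_{H^2}$ costs further positive powers of $\sup z$, while $\int_\Omega n^{2-2\alpha}\le m^{2\alpha}z^{1-2\alpha}$, so after Young's inequality the feedback term scales like $(\sup z)^{\gamma}$ with $\gamma\ge 3-2\alpha$, which exceeds the dissipation exponent $5/3$ unless $\alpha$ is well above $\tfrac13$ (roughly $\alpha\ge\tfrac23$ even in the most favorable accounting). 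In other words, the closure you describe reproduces a suboptimal condition of the type in \cite{PJ2022} rather than the sharp one; the ``central technical obstacle'' you flag is precisely the step that is missing.

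The paper breaks this circle differently, and this is the idea your proposal lacks. Because $\kappa=0$ and $\int_\Omega n\equiv m$, the forcing $n\nabla\phi$ is uniformly bounded in $L^1(\Omega)$, so the Stokes semigroup alone yields a time-uniform bound for $u$ in $L^p(\Omega)$ for every $p<3$ depending only on $m$ and $u_0$ (Lemma~\ref{L2}), with no reference to $\|n\|_{L^2}$; likewise $c$ is bounded in $L^p(\Omega)$ for $p<3$ (Lemma~\ref{L3}), and the elliptic comparison estimate $\int_\Omega c^{p}\le C\int_\Omega n^{p/\lambda}+C$ for $\lambda<3$ (Lemma~\ref{L4}) lets one trade $c$ for a fractional power of $n$. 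Then, testing with $n^{p-1}$ directly for arbitrary $p$ (not $p=2$ first), the dangerous term $\int_\Omega n^{p-2\alpha}|u|^2c^2$ is handled by H\"older with $u\in L^{2q/(q-1)}$, $q>3$, and Lemma~\ref{L4}, turning it into a pure power $\bigl(\int_\Omega n^{q(p-2\alpha)+2q/r}\bigr)^{1/q}$ that Gagliardo--Nirenberg with only the mass bound absorbs into the dissipation precisely when $\alpha>\tfrac13$ (Lemma~\ref{L5}). Only after these $L^p$ bounds does one upgrade $u$ to $L^\infty$ and $c$ to $W^{1,\infty}$ and run Moser iteration, as in your Stages 3--4. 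Without replacing your circular Stage 2 by an argument of this kind (or by an honest continuity argument that actually closes at the sharp exponent, which your sketch does not provide), the proposal does not prove the theorem for $\alpha>\tfrac13$.
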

\begin{remark}
    It remains unknown that if Theorem \ref{thm2} still holds under a weaker assumption for positive chemical sensitivity function $S$ that $\lim_{\xi \to  \infty} \xi^{\frac{1}{3}}S(\xi) =0$.
\end{remark}

The proof of Theorem~\ref{thm2}, though unexpectedly straightforward, proceeds along a classical line of reasoning by introducing the functional
\begin{align*}
    H(t) = \int_\Omega n^p, \qquad t>0.
\end{align*}
The principal challenge arises from controlling the term involving the velocity field $u$, namely
\[
    \int_\Omega n^{p-2\alpha} |u|^2 c^2.
\]
By virtue of the elliptic regularity result established in Lemma~\ref{L4}, one may heuristically approximate $c$ by $n^{\frac{1}{3}}$ and use the fact that $u$ is bounded in $L^p$ with $p<3$ (see Lemma \ref{L2}). Under this approximation, the above term can be dominated by
\[
   \left ( \int_\Omega n^{3p-6\alpha+2} \right )^{\frac{1}{3}},
\]
which can be successfully absorbed into the diffusion term when $\alpha>\frac{1}{3}$.\\

The paper is organized as follows. In Section~\ref{S2}, we recall the well-established local well-posedness theory for system~\eqref{1} and introduce several useful inequalities that will be employed in subsequent sections. Section~\ref{S3} is devoted to the proof of Theorem~\ref{thm1}, which establishes global existence in two spatial dimensions. Finally, in Section~\ref{S4}, we prove Theorem~\ref{thm2}, thereby obtaining the global existence result in three spatial dimensions.

\section{Preliminaries} \label{S2}
In this section, we establish the local existence of solutions to system
\eqref{1} and introduce several useful inequalities, including differential inequalities and Moser--Trudinger type inequalities in two dimensions. We begin by recording
the following basic local well-posedness result. Since the proof follows from
a standard fixed-point argument, similar to those in
\cite{Winkler2012}[Lemma~2.1] and \cite{Winkler+2020}[Lemma~2.2], we omit the
details to avoid redundancy.

\begin{lemma} \label{Local}
    Let $\Omega \subset \mathbb{R}^N$ with $N=2,3$ be a bounded domain with smooth boundary. Suppose that $\kappa=0$ when $N=3$ and $\kappa=1$ when $N=2$ and \eqref{S} and \eqref{phi} hold. Then there exists a maximal existence time $T_{\rm max}\in (0, \infty]$ and a unique quadruple $(n,c,u,P)$ of functions 
    \begin{equation}
        \begin{cases}
            n \in C^0 \left (\bar{\Omega} \times [0,T_{\rm max}) \right ) \cap C^{2,1} \left ( \bar{\Omega} \times (0,T_{\rm max}) \right ), \\
            c \in C^{2,0}\left ( \bar{\Omega} \times (0,T_{\rm max}) \right ) , \\
            u \in C^0 \left ([0,T_{\rm max}); D(A^\beta)  \right ) \cap C^{2,1}\left ( \bar{\Omega} \times (0,T_{\rm max}); \mathbb{R}^N \right ) \quad \text{and }\\
            P \in C^{1,0} \left ( \bar{\Omega}\times (0,T_{\rm max}) \right ),
        \end{cases}
    \end{equation}
    which are such that $n>0$ and $c>0$ in $\bar{\Omega}\times (0,T_{\rm max})$, that $(n,c,u,P)$ solves \eqref{1} under the boundary conditions \eqref{bdry} and initial conditions \eqref{initial} in the classical sense, and that 
    \begin{align}\label{Local.1}
        \text{if }T_{\rm max}< \infty \quad \text{then } \limsup_{t\to T_{\rm max}} \left \{ \left \|n(\cdot,t) \right \|_{L^\infty(\Omega)}+\left \|c(\cdot,t) \right \|_{W^{1,\infty}(\Omega)}+ \left \|A^\beta u(\cdot,t) \right \|_{L^2(\Omega)}  \right \} = \infty.
    \end{align}
    Moreover, 
     \begin{align} \label{Local.2}
        \int_\Omega n(\cdot,t) = \int_\Omega c(\cdot,t) = \int_\Omega n_0 =m \qquad \text{for all }t\in (0,T_{\rm max}).
    \end{align}
\end{lemma}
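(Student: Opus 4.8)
The plan is to recast problem \eqref{1}, \eqref{bdry}, \eqref{initial} as a fixed-point problem, solve it by the Banach contraction principle on a short time interval, and then upgrade the resulting mild solution to a classical one by standard parabolic and elliptic regularity. Since the second line of \eqref{1} carries no time derivative, $c$ is slaved to the pair $(n,u)$, which I take as the iteration variables. Given $\tilde n\in C^0(\bar\Omega\times[0,T])$ and $\tilde u\in C^0([0,T];D(A^\beta))$ in a ball of radius $R$, I would first solve, for each fixed $t$, the linear elliptic problem $-\Delta c+\tilde u\cdot\nabla c+c=\tilde n$ with $\partial_\nu c=0$: since $\nabla\cdot\tilde u=0$ and $\tilde u|_{\partial\Omega}=0$, the drift term is skew-symmetric in $L^2$, so the associated bilinear form is coercive on $H^1(\Omega)$ and Lax--Milgram applies, after which elliptic $L^q$ regularity together with $D(A^\beta)\hookrightarrow L^\infty(\Omega)$ (valid since $\beta>N/4$ under \eqref{initial}) bootstraps $c$ into $W^{2,q}(\Omega)$ for every finite $q$, with $\|\nabla c\|_{L^\infty}$ bounded in terms of $R$. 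Extending $S$ to a bounded $C^2$ function on $\mathbb{R}$ (nonnegativity of $n$ will be recovered afterwards), I then define $(n,u)=\Phi(\tilde n,\tilde u)$ via the Duhamel formulas
\begin{align*}
  n(t) &= e^{t\Delta}n_0-\int_0^t e^{(t-s)\Delta}\nabla\cdot\bigl(\tilde n(s)S(\tilde n(s))\nabla c(s)+\tilde n(s)\tilde u(s)\bigr)\,ds,\\
  u(t) &= e^{-tA}u_0+\int_0^t e^{-(t-s)A}\mathcal{P}\bigl(\tilde n(s)\nabla\phi-\kappa(\tilde u(s)\cdot\nabla)\tilde u(s)\bigr)\,ds,
\end{align*}
where $e^{t\Delta}$ is the Neumann heat semigroup, $e^{-tA}$ the Stokes semigroup, and $\nabla\cdot\tilde u=0$ has been used to put the transport term $\tilde u\cdot\nabla n$ in divergence form.

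Next I would verify that $\Phi$ maps the closed ball of radius $R\sim\|n_0\|_{L^\infty}+\|A^\beta u_0\|_{L^2}+1$ in $C^0(\bar\Omega\times[0,T])\times C^0([0,T];D(A^\beta))$ into itself and is a contraction there provided $T=T(R)$ is chosen small. For the $n$-component this relies on the smoothing estimate $\|e^{t\Delta}\nabla\cdot g\|_{L^\infty}\le C(1+t^{-1/2-N/(2q)})\|g\|_{L^q}$ with a fixed $q>N$, so that the Duhamel integral picks up a positive power of $T$; for the $u$-component one estimates $(\tilde u\cdot\nabla)\tilde u$ and $\tilde n\nabla\phi$ directly in $L^2$ (using $D(A^\beta)\hookrightarrow L^\infty(\Omega)\cap H^1(\Omega)$ and $\phi\in W^{2,\infty}(\Omega)$) and applies $\|A^\beta e^{-tA}\|_{\mathcal{L}(L^2_\sigma)}\le Ct^{-\beta}$, which is integrable near $t=0$ precisely because $\beta<1$. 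The contraction estimate is analogous, additionally using the local Lipschitz continuity of $\xi\mapsto\xi S(\xi)$ and that $\tilde n\mapsto c$ is affine linear and bounded from $L^q$ into $W^{2,q}$. The Banach fixed-point theorem then yields, on some interval $[0,T]$, a unique mild solution $(n,c,u)$, and the pressure is recovered from the third line of \eqref{1} via $\nabla P=\Delta u-u_t-\kappa(u\cdot\nabla)u+n\nabla\phi$. I expect this exponent bookkeeping — reconciling the integrability windows coming from the chemotactic flux with the $\beta$-range forced by the Stokes scale and, in two dimensions, by the convective term — to be the only genuinely delicate point, which is why the full computation can be left out.

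Finally, I would bootstrap regularity and pass to a maximal solution. Applying parabolic $L^p$ and Schauder estimates to the $n$- and $u$-equations, now read as linear equations with Hölder-continuous coefficients assembled from the known $c$ and $u$ (here $S\in C^2$ enters), gives $n,u\in C^{2,1}$ on $\bar\Omega\times(0,T)$ and hence $P\in C^{1,0}$, while elliptic Schauder theory gives $c\in C^{2,0}$. Positivity of $n$ follows from the parabolic maximum principle, with $0$ a subsolution of the $n$-equation, and then $c>0$ from the strong maximum principle for the elliptic equation, assuming $n_0\not\equiv0$. Denoting by $T_{\max}\in(0,\infty]$ the supremum of all $T$ admitting such a solution, the extensibility criterion \eqref{Local.1} follows from the standard continuation argument: since the local existence time can be chosen to depend only on $\|n(\cdot,t_0)\|_{L^\infty}+\|c(\cdot,t_0)\|_{W^{1,\infty}}+\|A^\beta u(\cdot,t_0)\|_{L^2}$, were this quantity to remain bounded as $t_0\uparrow T_{\max}<\infty$ the solution could be continued beyond $T_{\max}$, a contradiction. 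Lastly, \eqref{Local.2} is immediate: integrating the first line of \eqref{1} over $\Omega$ and using $\partial_\nu n=0$, $\nabla\cdot u=0$, $u|_{\partial\Omega}=0$ yields $\frac{d}{dt}\int_\Omega n=0$, while integrating the second line with the same boundary facts yields $\int_\Omega c=\int_\Omega n$.
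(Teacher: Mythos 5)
Your proposal is correct and follows essentially the same route the paper intends: the paper omits the proof precisely because it is this standard argument (cited from \cite{Winkler2012} and \cite{Winkler+2020}), namely a Banach fixed-point scheme for $(n,u)$ with $c$ slaved through the elliptic equation, semigroup smoothing estimates for the Neumann heat and Stokes semigroups, parabolic/elliptic bootstrap to classical regularity, maximum principles for positivity, a continuation argument for \eqref{Local.1}, and direct integration of the first two equations for \eqref{Local.2}. The only point worth tightening is that in the contraction step $c$ depends on $\tilde u$ as well as on $\tilde n$, so one needs Lipschitz dependence of $c$ on both arguments (not just affine-linearity in $\tilde n$), which follows from the same Lax--Milgram/elliptic estimates on the ball.
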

From now on, we denote $(n,c,u,P)$ as a classical solution in $\Omega \times (0,T_{\rm max})$ as established in Lemma \ref{Local}. Let us now introduce the following elementary differential inequality which will later be applied in Lemma \ref{L2u} and Lemma \ref{Gradu}. For a detailed proof, we refer the reader to \cite[ Lemma 3.4]{Winkler+2019}
\begin{lemma} \label{ODI}
    Let $T\in (0, \infty]$ and $\tau \in (0,T)$, $a>0$ and $b>0$, and assume that $y: [t_0,T) \to [0,\infty)$ for some $t_0 \in \mathbb{R}$ is absolutely continuous and such that 
    \begin{align*}
        y'(t)+ay(t) \leq h(t)\qquad \text{for a.e } t \in (t_0,T),
    \end{align*}
    with some nonnegative function $h \in L^1_{loc}([t_0,T))$ satisfying 
    \begin{align*}
        \frac{1}{\tau }\int_t^{t+\tau }h(s)\, ds \leq b \qquad \text{for all }t\in [t_0,T-\tau ). 
    \end{align*}
    Then 
    \begin{align*}
        y(t) \leq y(t_0)e^{-a(t-t_0)}+\frac{b \tau }{1- e^{-a\tau }} \qquad \text{for all }t\in [0,T).
    \end{align*}
\end{lemma}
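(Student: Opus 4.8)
The plan is to run a textbook Grönwall-type argument: first turn the differential inequality into a Duhamel-type integral inequality via the integrating factor $e^{a(t-t_0)}$, and then bound the resulting convolution integral by decomposing the time interval into slabs of length $\tau$ and summing a geometric series.

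\textbf{Step 1 (integrating factor).} Since $y$ is absolutely continuous on compact subintervals of $[t_0,T)$ and $s\mapsto e^{a(s-t_0)}$ is smooth, the product $w(s):=e^{a(s-t_0)}y(s)$ is absolutely continuous on such subintervals, with $w'(s)=e^{a(s-t_0)}\bigl(y'(s)+ay(s)\bigr)\le e^{a(s-t_0)}h(s)$ for a.e.\ $s\in(t_0,T)$. Integrating over $(t_0,t)$ and dividing by $e^{a(t-t_0)}$ gives, for every $t\in[t_0,T)$,
\begin{align*}
  y(t)\;\le\; y(t_0)\,e^{-a(t-t_0)}\;+\;\int_{t_0}^{t}e^{-a(t-s)}\,h(s)\,ds ,
\end{align*}
so the whole matter reduces to showing that the last integral is at most $\dfrac{b\tau}{1-e^{-a\tau}}$, uniformly in $t$.

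\textbf{Step 2 (slab decomposition and summation).} Fix $t\in[t_0,T)$ and scan backwards from $t$ in steps of $\tau$: for $j\in\mathbb{N}\cup\{0\}$ set $s_j:=\max\{t_0,\,t-j\tau\}$ and $I_j:=[s_{j+1},s_j]$, so that $[t_0,t]=\bigcup_{j\ge 0}I_j$ with only finitely many $I_j$ nondegenerate, each of length at most $\tau$. For $s\in I_j$ one has $t-s\ge j\tau$, hence $e^{-a(t-s)}\le e^{-aj\tau}$; and since $h\ge 0$ and $|I_j|\le\tau$, applying the hypothesis at the left endpoint $s_{j+1}$ of $I_j$ (which lies in the interval $[t_0,T-\tau)$ appearing in the hypothesis) yields $\int_{I_j}h\le b\tau$. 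Therefore
\begin{align*}
  \int_{t_0}^{t}e^{-a(t-s)}\,h(s)\,ds
  \;=\;\sum_{j\ge 0}\int_{I_j}e^{-a(t-s)}\,h(s)\,ds
  \;\le\;\sum_{j\ge 0}e^{-aj\tau}\int_{I_j}h(s)\,ds
  \;\le\; b\tau\sum_{j\ge 0}e^{-aj\tau}
  \;=\;\frac{b\tau}{1-e^{-a\tau}} ,
\end{align*}
where the geometric series converges because $a,\tau>0$ force $e^{-a\tau}\in(0,1)$.

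\textbf{Step 3 (conclusion and main obstacle).} Combining the two displays gives $y(t)\le y(t_0)e^{-a(t-t_0)}+\frac{b\tau}{1-e^{-a\tau}}$ for all $t$ in the admissible range, which is exactly the asserted bound. I do not expect a genuine obstacle here: the argument is entirely elementary. The only two points that deserve a line of care are (i) the legitimacy of integrating the a.e.\ differential inequality, which is guaranteed by absolute continuity of $w$, and (ii) the bookkeeping in the slab decomposition, where nonnegativity of $h$ is what allows each partial slab $I_j$ to be replaced by a full window of length $\tau$ before invoking the hypothesis.
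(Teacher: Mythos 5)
Your proof is correct, and it is essentially the canonical argument: the paper itself does not prove Lemma \ref{ODI} but defers to \cite[Lemma 3.4]{Winkler+2019}, where the proof proceeds exactly as yours does, via the variation-of-constants inequality from the integrating factor followed by splitting $[t_0,t]$ into windows of length at most $\tau$ and summing the geometric series $\sum_{j\ge0}e^{-aj\tau}$. The only caveats are cosmetic and inherited from the statement rather than from your argument: the conclusion should read $t\in[t_0,T)$, and one implicitly needs $t_0<T-\tau$ (so that the averaged hypothesis on $h$ is nonvacuous), which is precisely what justifies invoking it at a clipped slab whose left endpoint is $t_0$.
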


For the later use in Lemma \ref{LlolL}, we need the following lemma.

\begin{lemma} \label{ODI'}
   Let  $a\in \mathbb{R}$ and $T \in (a, \infty]$. Assume that $y: [a,T) \to [0,\infty)$  is absolutely continuous and such that 
    \begin{align*}
         \int_t ^{t+\tau} y(s) \,ds  \leq L_1\qquad \text{for all }t\in (a,T-\tau),
    \end{align*}
    where  $\tau= \min \left \{1, \frac{T-a}{2} \right \}$ and $L_1>0$, and 
    \begin{align*}
         y'(t) \leq h(t)y(t)+g(t) \qquad \text{for a.e } t \in (a,T),
    \end{align*}
    where $h$ and $g $ are nonnegative continuous functions in $[a,T)$ such that 
    \begin{align*}
        \int_t ^{t+\tau} h(s) \,ds  \leq L_2 \quad \text{and } \int_t ^{t+\tau} g(s) \,ds  \leq L_3 \qquad \text{for all }t\in (a,T-\tau),
    \end{align*}
    where $L_2>0$, and $L_3>0$. Then
    \begin{align*}
        y(t) \leq \frac{L_1 e^{L_2}}{\tau}+ L_3 e^{L_2} \qquad \text{for all }t\in (0,T).
    \end{align*}
    
\end{lemma}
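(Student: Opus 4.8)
The statement to prove is Lemma~\ref{ODI'}, a Grönwall-type differential inequality with averaged (rather than pointwise) control on the coefficients.

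\textbf{Overall approach.} The plan is to run a two-step argument. First, use the averaged bounds on $h$, $g$, and $y$ to produce a \emph{pointwise} bound on $y$ at some well-chosen reference time in every window of length $\tau$; second, integrate the differential inequality $y' \le hy + g$ forward from that reference point across a single window using the standard integrating-factor (Grönwall) estimate, and use the fact that the averaged bounds on $h$ and $g$ over that window are $L_2$ and $L_3$ to get a uniform constant. Because every time $t \in (a,T)$ lies within distance $\tau$ of such a reference point, this yields the claimed uniform bound for all $t$.

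\textbf{Key steps in order.} (1) Fix $t \in (a, T)$. Choose $t_0 = t_0(t) \in [t-\tau, t]$ (intersected with $[a,T)$, which is harmless since $\tau = \min\{1,(T-a)/2\}$) such that $y(t_0) \le \frac{1}{\tau}\int_{t_0}^{t_0+\tau} y(s)\,ds \le \frac{L_1}{\tau}$; such a $t_0$ exists by the mean value property of the integral, since otherwise $y(s) > L_1/\tau$ on the whole window, contradicting $\int y \le L_1$. (Care is needed at the right endpoint $t$ near $T$: one picks the window $[t_0, t_0+\tau] \subset (a,T)$ containing $t$, which is possible precisely because $\tau \le (T-a)/2$.) (2) On $[t_0, t]$ apply the integrating factor $\exp\!\big(-\int_{t_0}^{s} h(r)\,dr\big)$ to $y' \le hy + g$, obtaining
\[
y(t) \le y(t_0)\,\exp\!\Big(\int_{t_0}^{t} h(r)\,dr\Big) + \int_{t_0}^{t} g(s)\,\exp\!\Big(\int_{s}^{t} h(r)\,dr\Big)\,ds.
\]
(3) Bound $\int_{t_0}^{t} h \le \int_{t_0}^{t_0+\tau} h \le L_2$ and similarly $\int_{s}^{t} h \le L_2$ and $\int_{t_0}^{t} g \le L_3$, all using nonnegativity of $h,g$ and $t \le t_0 + \tau$. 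This gives $y(t) \le \frac{L_1}{\tau} e^{L_2} + L_3 e^{L_2}$, which is exactly the asserted bound; since $t$ was arbitrary in $(a,T)$ (and a fortiori in $(0,T)$ as stated), the proof is complete.

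\textbf{Main obstacle.} The only genuinely delicate point is the selection of the reference time $t_0$ together with a window $[t_0, t_0+\tau] \subset (a, T-\tau) \cup \text{(its closure inside }[a,T))$ on which the averaged hypotheses are actually assumed to hold — the hypotheses are stated for $t \in (a, T-\tau)$, so one must ensure the chosen window does not poke past $T$. The choice $\tau = \min\{1, (T-a)/2\}$ is exactly what makes this work: any $t \in (a,T)$ is within $\tau$ of a point $t_0$ with $t_0 + \tau < T$ (for instance $t_0 = \max\{a, t-\tau\}$ when $t - \tau \le T - \tau$, i.e. always, and one checks $t_0 + \tau \le T$ with the correct endpoint handling). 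Everything else is a routine Grönwall computation, so I would keep that part brief and spend the write-up on making the window selection airtight.
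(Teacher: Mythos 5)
Your argument is essentially the paper's own proof: pick a reference time $t_0$ within distance $\tau$ to the left of $t$ at which $y(t_0)\le L_1/\tau$ by the averaged bound, then integrate $y'\le hy+g$ with the factor $e^{-\int_{t_0}^{\cdot}h}$ and bound $\int_{t_0}^{t}h\le L_2$, $\int_{t_0}^{t}g\le L_3$, exactly as in the paper. One remark on your ``main obstacle'': the right endpoint is actually unproblematic (for any $t<T$ one has $t-\tau<T-\tau$, so the window $[t-\tau,t]$ is admissible), whereas the genuinely delicate end is the left one, which you dismiss as harmless — for $t\in(a,a+\tau)$ the window $(t-\tau,t)\cap[a,T)$ has length smaller than $\tau$, so the mean-value selection only gives $y(t_0)\le L_1/(t-a)$, not $L_1/\tau$; the paper's proof glosses over the same point (and indeed the stated constant, with no dependence on $y(a)$, can only be justified for $t\ge a+\tau$), so your proposal is on equal footing with the published argument.
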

\begin{proof}
    The assumption $ \sup_{t \in (a, T-\tau)}\int_t ^{t+\tau }y(s)\, ds \leq L_1$, implies that for any $t \in (a, T)$ then there exists $t_0 \in (t-\tau , t)\cap [a,\infty)$ depending on $t$ such that 
  \begin{align*}
      y(t_0) \leq \frac{L_1}{\tau}.
  \end{align*}
  Multiplying $e^{- \int_{t_0}^t h(s)\, ds}$ to the inequality $y'(t) \leq h(t)y(t)+g(t)$ and integrating from $t_0$ to $t$, we deduce that 
  \begin{align}
      y(t) &\leq y(t_0)e^{  \int_{t_0}^t h(s)\, ds} + \int_{t_0}^t e^{\int_{s}^t h(z)\, dz} g(s)\, ds \notag \\
      &\leq \frac{L_1 e^{L_2}}{\tau}+ L_3 e^{L_2} \qquad \text{for all } t\in (a,T),
  \end{align}
  which completes the proof.
\end{proof}
Next, we recall from \cite[Lemma 2.2]{Winkler_SIAM} a variant of the Moser--Trudinger inequality, which will later be applied to establish the $L^2$ boundedness of $u$ in Lemma~\ref{L2u}.

\begin{lemma} \label{Moser-Trudinger1}
    Let $\Omega \subset \mathbb{R}^2$ be a bounded domain with smooth boundary. Then for all $\epsilon>0$ there exists $M=M(\epsilon,\Omega)>0$ such that if $0  \not\equiv \phi \in  C^0(\bar{\Omega}) $ is nonnegative and $\psi \in W^{1,2}(\Omega)$, then for each $a>0$
    \begin{align}
        \int_\Omega \phi |\psi| \leq \frac{1}{a}\int_\Omega \phi \ln \frac{\phi}{\bar{\phi}} + \frac{(1+\epsilon)a}{8\pi}\cdot \left \{ \int_\Omega \phi \right \} \cdot \int_\Omega |\nabla \psi|^2+Ma \cdot \left \{ \int_\Omega \phi \right \} \cdot \left \{ \int_\Omega |\psi| \right \}^2+ \frac{M}{a}\int_\Omega \phi,
    \end{align}
    where $\bar{\phi}:= \frac{1}{|\Omega|}\int_\Omega \phi $.
\end{lemma}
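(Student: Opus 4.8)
The plan is to deduce the stated inequality from two ingredients: the elementary Legendre duality between Boltzmann's entropy and the log--sum--exp functional (Gibbs' variational principle), and a sharp Moser--Trudinger inequality on the bounded planar domain $\Omega$. Since both sides of the claimed estimate depend on $\psi$ only through $|\psi|$ and $|\nabla\psi|$, since $|\psi|\in W^{1,2}(\Omega)$ whenever $\psi\in W^{1,2}(\Omega)$, and since $\bigl|\nabla|\psi|\bigr|=|\nabla\psi|$ a.e.\ in $\Omega$, it suffices to prove the inequality under the additional hypothesis $\psi\ge 0$ and then apply the result with $|\psi|$ in place of $\psi$. So assume $\psi\ge 0$ throughout.

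First I would invoke the Young-type inequality $st\le s\ln\frac{s}{\lambda}-s+\lambda e^{t}$, valid for all $s\ge0$, $t\in\mathbb{R}$ and $\lambda>0$, which merely records that $p\mapsto\lambda e^{p}$ is the convex conjugate of $s\mapsto s\ln\frac{s}{\lambda}-s$. Applying it pointwise in $x$ with $s=\phi(x)$, $t=a\psi(x)$ and with the constant $\lambda:=\bar\phi\,|\Omega|\,\Bigl(\int_\Omega e^{a\psi}\Bigr)^{-1}$ --- which is finite and strictly positive, since $e^{a\psi}\in L^1(\Omega)$ by the exponential Sobolev embedding in two dimensions and since $\phi\not\equiv0$ --- and integrating over $\Omega$, the two terms $-\int_\Omega\phi$ and $\lambda\int_\Omega e^{a\psi}=\bar\phi|\Omega|=\int_\Omega\phi$ cancel, which leaves
\begin{align}\label{gibbs}
    \int_\Omega\phi\psi\;\le\;\frac1a\int_\Omega\phi\ln\frac{\phi}{\bar\phi}\;+\;\frac1a\Bigl(\int_\Omega\phi\Bigr)\ln\!\Bigl(\frac{1}{|\Omega|}\int_\Omega e^{a\psi}\Bigr).
\end{align}

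Next I would feed into \eqref{gibbs} the logarithmic form of the sharp Moser--Trudinger inequality: for every $\epsilon>0$ there is $C_\epsilon=C_\epsilon(\Omega)>0$ such that
\[
    \ln\!\Bigl(\frac{1}{|\Omega|}\int_\Omega e^{v}\Bigr)\;\le\;\frac{1+\epsilon}{8\pi}\int_\Omega|\nabla v|^2\;+\;\frac{1}{|\Omega|}\int_\Omega v\;+\;C_\epsilon\qquad\text{for all }v\in W^{1,2}(\Omega),
\]
where $\tfrac1{8\pi}$ is the sharp constant for general (not necessarily mean-zero) $W^{1,2}$ functions on a bounded planar domain and the slightly enlarged factor $\tfrac{1+\epsilon}{8\pi}$ buys a uniform additive constant. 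Choosing $v=a\psi$ bounds the second bracket in \eqref{gibbs} by $\tfrac{(1+\epsilon)a^2}{8\pi}\int_\Omega|\nabla\psi|^2+\tfrac{a}{|\Omega|}\int_\Omega\psi+C_\epsilon$, and the crucial bookkeeping is that the prefactor $\tfrac1a$ in \eqref{gibbs} turns the $a^2$ into precisely the claimed $\tfrac{(1+\epsilon)a}{8\pi}$. It then remains only to absorb the surviving linear term via the elementary estimate $\tfrac1{|\Omega|}\int_\Omega\psi\le\tfrac{a}{2|\Omega|}\bigl(\int_\Omega\psi\bigr)^2+\tfrac1{2a|\Omega|}$, to set $M:=C_\epsilon+\tfrac1{|\Omega|}$ (which depends only on $\epsilon$ and $\Omega$), and finally to pass from $\psi\ge0$ to a general $\psi\in W^{1,2}(\Omega)$ by the reduction above; this yields the assertion for every $a>0$.

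The only genuinely nonelementary ingredient --- and hence the step I would lean on external results for rather than re-prove --- is the sharp Moser--Trudinger estimate with constant $\tfrac1{8\pi}$ (together with its subcritical version with a uniform additive constant); this is classical, and I would simply quote it from the theory of Moser--Trudinger and Onofri inequalities on bounded planar domains, where it is precisely the analytic input already used in this context. Everything else --- the duality identity \eqref{gibbs} and two applications of Young's inequality --- is elementary, and it is exactly this identity that lets the sharp constant $\tfrac1{8\pi}$ and the first power of $a$ emerge without any loss.
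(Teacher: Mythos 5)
Your argument is correct. Note that the paper itself does not prove this lemma at all: it is quoted verbatim from \cite{Winkler_SIAM}[Lemma 2.2], so there is no in-paper proof to compare against; your derivation reconstructs exactly the standard route used there, namely the entropy duality $st\le s\ln\frac{s}{\lambda}-s+\lambda e^{t}$ (giving $\int_\Omega\phi|\psi|\le\frac1a\int_\Omega\phi\ln\frac{\phi}{\bar\phi}+\frac1a\{\int_\Omega\phi\}\ln(\frac{1}{|\Omega|}\int_\Omega e^{a|\psi|})$) combined with the Nagai--Senba--Yoshida/Chang--Yang form of the Trudinger--Moser inequality with constant $\frac{1+\epsilon}{8\pi}$ for general $W^{1,2}(\Omega)$ functions, which is precisely the reference \cite{NSY} already in the bibliography. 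Your bookkeeping (cancellation of $-\int_\Omega\phi$ against $\lambda\int_\Omega e^{a\psi}$, the choice $M=C_\epsilon+\frac{1}{|\Omega|}$, and the reduction to $\psi\ge0$ via $|\psi|$) checks out, so the only external input is the quoted sharp-constant Moser--Trudinger estimate, which is legitimate to cite.
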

As a consequence of Lemma \ref{Moser-Trudinger1}, we can derive the following inequality that will later be used in Lemma \ref{L2u}. For a detailed proof, we refer the reader to Lemma 2.2 in \cite{WT2023}.

\begin{lemma} \label{Moser-Trudinger2}
    Let $\Omega \subset \mathbb{R}^2$ be a bounded domain with smooth boundary. Then for any $\epsilon>0$ there exists $\Gamma=\Gamma(\epsilon,\Omega)>0$ such that then for all $a>0$ and $M\geq 1$ and for each $0  \not\equiv \phi \in  C^0(\bar{\Omega}) $ is nonnegative and $\psi \in W^{1,2}(\Omega)$,
    \begin{align}\label{Moser-Trudinger2-1}
        \int_\Omega \phi |\psi| &\leq \frac{1+\epsilon}{2\pi a} \cdot \left \{ \int_\Omega \phi \right \}\cdot \left \{ \int_\Omega \frac{|\nabla \phi|^2}{(\phi+M)^2} \right \} + \frac{(1+\epsilon)a}{8 \pi} \cdot \left \{ \int_\Omega \phi \right \}\cdot \left \{ \int_\Omega |\nabla \psi|^2 \right \} \notag \\
        &\quad+\Gamma a\cdot \left \{ \int_\Omega \phi \right \}\cdot \left \{ \int_\Omega |\psi| \right \}^2 +\frac{8\Gamma}{a} \cdot \left \{ \int_\Omega \phi \right \}^3+\frac{\Gamma(8|\Omega|^2M^2+2)}{a} \cdot \left \{ \int_\Omega \phi \right \}+\frac{2|\Omega|}{ea},
    \end{align}
    and
    \begin{align}\label{Moser-Trudinger2-2}
        \int_\Omega \phi \log( \phi+M) &\leq \frac{1+\epsilon}{2\pi}\cdot \left \{ \int_\Omega \phi \right \}\cdot \left \{ \int_\Omega \frac{|\nabla \phi|^2}{(\phi+M)^2} \right \} \notag \\
        &\quad+8 \Gamma\cdot \left \{ \int_\Omega \phi \right \}^3 +\Gamma (8M^2|\Omega|^2+1)\int_\Omega \phi + \frac{|\Omega |}{e} . 
    \end{align}
\end{lemma}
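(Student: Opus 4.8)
The plan is to derive both inequalities of Lemma~\ref{Moser-Trudinger2} directly from Lemma~\ref{Moser-Trudinger1}, the point being to replace the entropy term $\frac1a\int_\Omega\phi\log(\phi/\bar\phi)$ appearing on the right-hand side of Lemma~\ref{Moser-Trudinger1} by a multiple of $\int_\Omega|\nabla\phi|^2/(\phi+M)^2$ plus harmless lower-order contributions. Throughout, let $M_0=M_0(\epsilon,\Omega)$ denote the constant produced by Lemma~\ref{Moser-Trudinger1} (written $M$ there; I rename it to avoid a clash with the free parameter $M\ge1$ here), and eventually set $\Gamma:=\max\{M_0,1\}$.

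The first ingredient is an elementary comparison. Using the pointwise bounds $t\log t\le t\log(t+M)$ for $t\ge0$ (with $0\log0:=0$) and $-t\log t\le e^{-1}$ for $t\ge0$, together with $\int_\Omega\phi=|\Omega|\bar\phi$, one gets
\[
\int_\Omega\phi\log\frac{\phi}{\bar\phi}=\int_\Omega\phi\log\phi-|\Omega|\,\bar\phi\log\bar\phi\;\le\;\int_\Omega\phi\log(\phi+M)+\frac{|\Omega|}{e}.
\]
This is what allows the (a priori self-referential) entropy term to be fed back in. Next I would prove \eqref{Moser-Trudinger2-2}. Since $M\ge1$, the function $v:=\log(\phi+M)$ is nonnegative, so $\int_\Omega\phi\log(\phi+M)=\int_\Omega\phi|v|$ and $|\nabla v|^2=|\nabla\phi|^2/(\phi+M)^2$. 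Applying Lemma~\ref{Moser-Trudinger1} with $\psi=v$ and with the \emph{specific} choice $a=2$, and then using the displayed entropy bound to absorb the $\frac12\int_\Omega\phi\log(\phi/\bar\phi)$ term into the left-hand side, yields
\[
\int_\Omega\phi\log(\phi+M)\le\frac{1+\epsilon}{2\pi}\Bigl(\int_\Omega\phi\Bigr)\int_\Omega\frac{|\nabla\phi|^2}{(\phi+M)^2}+4M_0\Bigl(\int_\Omega\phi\Bigr)\Bigl(\int_\Omega v\Bigr)^2+M_0\int_\Omega\phi+\frac{|\Omega|}{e}.
\]
Finally $\log(\phi+M)\le\phi+M-1$ gives $\int_\Omega v\le\int_\Omega\phi+M|\Omega|$, hence $(\int_\Omega v)^2\le2(\int_\Omega\phi)^2+2M^2|\Omega|^2$; inserting this and replacing $M_0$ by $\Gamma$ produces exactly \eqref{Moser-Trudinger2-2}.

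For \eqref{Moser-Trudinger2-1} I would apply Lemma~\ref{Moser-Trudinger1} to the given general $\psi$ with $a$ kept free, and then estimate the term $\frac1a\int_\Omega\phi\log(\phi/\bar\phi)$ by combining the entropy comparison above with the already-established \eqref{Moser-Trudinger2-2}; this replaces it by $\frac{1+\epsilon}{2\pi a}(\int_\Omega\phi)\int_\Omega|\nabla\phi|^2/(\phi+M)^2$ together with the terms $\frac{8\Gamma}{a}(\int_\Omega\phi)^3$, $\frac{\Gamma(8M^2|\Omega|^2+1)}{a}\int_\Omega\phi$ and $\frac{2|\Omega|}{ea}$. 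Merging the leftover $\frac{M_0}{a}\int_\Omega\phi$ from Lemma~\ref{Moser-Trudinger1} with $\frac{\Gamma(8M^2|\Omega|^2+1)}{a}\int_\Omega\phi$ (using $M_0\le\Gamma$) gives the coefficient $\frac{\Gamma(8M^2|\Omega|^2+2)}{a}$, and keeping the two terms $\frac{(1+\epsilon)a}{8\pi}(\int_\Omega\phi)\int_\Omega|\nabla\psi|^2$ and $M_0a(\int_\Omega\phi)(\int_\Omega|\psi|)^2\le\Gamma a(\int_\Omega\phi)(\int_\Omega|\psi|)^2$ unchanged yields precisely the claimed right-hand side of \eqref{Moser-Trudinger2-1}.

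The only genuinely delicate point is the choice $a=2$ in the proof of \eqref{Moser-Trudinger2-2}: since the entropy term is essentially $\frac1a\int_\Omega\phi\log(\phi+M)$, absorbing it forces the effective coefficient of $\int_\Omega|\nabla\phi|^2/(\phi+M)^2$ to be $\frac{(1+\epsilon)a}{8\pi}$ divided by $(1-\tfrac1a)$, i.e. $\frac{(1+\epsilon)a^2}{8\pi(a-1)}$, and the quantity $a^2/(a-1)$ is minimized over $a>1$ exactly at $a=2$ with minimal value $4$, producing the sharp constant $\frac{1+\epsilon}{2\pi}$; any other admissible $a$ either fails to close the estimate or degrades the constant. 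The remaining work is routine bookkeeping of constants that depend only on $\epsilon$ and $\Omega$ (one should also note that the inequalities are trivial unless $\int_\Omega|\nabla\phi|^2/(\phi+M)^2<\infty$, in which case $\log(\phi+M)\in W^{1,2}(\Omega)$ so that Lemma~\ref{Moser-Trudinger1} is applicable with $\psi=v$).
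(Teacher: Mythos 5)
Your proposal is correct, and it takes essentially the same route as the source the paper defers to: the paper itself gives no proof of Lemma~\ref{Moser-Trudinger2} but cites Lemma~2.2 of \cite{WT2023}, whose argument is precisely your scheme of applying Lemma~\ref{Moser-Trudinger1} with $\psi=\log(\phi+M)$ and $a=2$, absorbing the entropy via $\int_\Omega \phi\log\frac{\phi}{\bar\phi}\le\int_\Omega\phi\log(\phi+M)+\frac{|\Omega|}{e}$ to obtain \eqref{Moser-Trudinger2-2}, and then feeding \eqref{Moser-Trudinger2-2} back into Lemma~\ref{Moser-Trudinger1} for general $\psi$ to get \eqref{Moser-Trudinger2-1}. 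Your bookkeeping of the constants (with $\Gamma=\max\{M_0,1\}$) and the remark on the applicability of Lemma~\ref{Moser-Trudinger1} to $\psi=\log(\phi+M)$ are both in order.
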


\section{Two-Dimensional Keller-Segel-Navier-Stokes systems } \label{S3}
In this section, we consider the case $N=2$ and assume that $S$ satisfies condition~\eqref{S}. First, in Subsection~\ref{S3-1}, we derive several useful estimates for solutions to system~\eqref{1}. Next, in Subsection~\ref{S3-2}, we establish a uniform-in-time $L\log L$ bound for $n$ and use it to obtain an $L^p$ bound for $n$. Finally, in Subsection~\ref{S3-3}, we prove an $L^\infty$ bound for the solutions.

\subsection{A Priori Estimates} \label{S3-1}

As a consequence of the decay condition \eqref{S} for $S$, we can derive the following elementary estimate which will later used in Lemma \ref{L1}.
\begin{lemma} \label{LTW}
    For any $\epsilon>0$, there exists $M_\epsilon>0$ such that whenever $M \geq M_\epsilon$,
    \begin{align}\label{LTW-1}
        \frac{\xi |S(\xi)|}{\xi+M} \leq \epsilon \qquad \text{for all }\xi \geq 0.
    \end{align}
\end{lemma}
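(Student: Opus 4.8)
The plan is to exploit the decay hypothesis $\lim_{\xi \to \infty} S(\xi) = 0$ directly. Fix $\epsilon > 0$. Since $S$ vanishes at infinity, there exists $\xi_0 = \xi_0(\epsilon) > 0$ such that $|S(\xi)| \le \epsilon/2$ for all $\xi \ge \xi_0$. For the remaining compact range $\xi \in [0, \xi_0]$, the function $S$ is continuous, hence bounded: set $K = \max_{0 \le \xi \le \xi_0} |S(\xi)| < \infty$ (and we may assume $K > 0$, else the estimate is trivial).

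Now I would split the argument into these two regimes. For $\xi \ge \xi_0$, we simply bound
\[
\frac{\xi |S(\xi)|}{\xi + M} \le |S(\xi)| \le \frac{\epsilon}{2} \le \epsilon,
\]
using $\xi/(\xi+M) \le 1$; note this holds for every $M > 0$. For $0 \le \xi \le \xi_0$, we estimate
\[
\frac{\xi |S(\xi)|}{\xi + M} \le \frac{\xi_0 K}{M},
\]
again using $\xi/(\xi+M) \le \xi/M \le \xi_0/M$ and $|S(\xi)| \le K$. Choosing $M_\epsilon := 2 \xi_0 K / \epsilon$ guarantees that for all $M \ge M_\epsilon$ the right-hand side is at most $\epsilon/2 \le \epsilon$. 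Combining the two regimes yields $\frac{\xi |S(\xi)|}{\xi + M} \le \epsilon$ for all $\xi \ge 0$ whenever $M \ge M_\epsilon$, which is precisely \eqref{LTW-1}.

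This is a purely elementary lemma, so I do not anticipate any genuine obstacle; the only mild care needed is the case distinction and the observation that the threshold $\xi_0$ depends on $\epsilon$ while the bound on $[0,\xi_0]$ is uniform in $M$ but grows like $1/M$, so the choice of $M_\epsilon$ must absorb the constant $\xi_0 K$. One could alternatively phrase the whole thing via $\limsup$ and a standard $\epsilon$-$\delta$ argument, but the two-regime split is the cleanest presentation and mirrors how the estimate will be used (with $M$ large) in Lemma~\ref{L1}.
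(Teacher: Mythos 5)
Your proof is correct and follows essentially the same route as the paper: split into a large-$\xi$ regime where the decay $\lim_{\xi\to\infty}S(\xi)=0$ gives the bound directly (uniformly in $M$), and a compact regime $[0,\xi_0]$ where boundedness of $S$ yields a bound of order $1/M$, then choose $M_\epsilon$ accordingly. No issues.
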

\begin{proof}
    By the condition \eqref{S}, for any $\epsilon>0$, there exists $A_\epsilon>0$ such that 
    \begin{align}\label{LTW.1}
        |S(\xi)| \leq \epsilon \qquad \text{for all } \xi >  A_\epsilon.
    \end{align}
    One the other hand, the boundedness of $S$ due to \eqref{S} implies that 
    \begin{align}\label{LTW.2}
           \frac{\xi |S(\xi)|}{\xi+M}  \leq \frac{A_\epsilon \left \|S \right \|_{L^\infty((0, \infty))} }{M+B_\epsilon} \qquad\text{for all } \xi \leq  A_\epsilon.
    \end{align}
    Now, choosing $M_\epsilon := \max \left \{\frac{A_\epsilon (\left \|S \right \|_{L^\infty((0, \infty))}- \epsilon)}{\epsilon} , 1\right \}$ then for any $M\geq M_\epsilon$ we have
    \begin{align}\label{LTW.3}
        \frac{A_\epsilon \left \|S \right \|_{L^\infty((0, \infty))} }{M+B_\epsilon} \leq \epsilon.
    \end{align}
    Collecting \eqref{LTW.1}, \eqref{LTW.2}, and \eqref{LTW.3} implies \eqref{LTW-1}. The proof is now complete.
    
\end{proof}

The following lemma provides a simple yet crucial estimate, which will later be applied in Lemma~\ref{L1}.

\begin{lemma} \label{Lnc}
There exists $C>0$ such that 
\begin{align*}
    \int_\Omega nc \leq \frac{4\pi}{m} \int_\Omega n \log (n+1)+C \qquad \text{for all } t\in (0,T_{\rm max}),
\end{align*}
where $m=\int_\Omega n(\cdot,t)$.
\end{lemma}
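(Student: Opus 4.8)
The plan is to work directly with the elliptic $c$-equation. First I would test $\Delta c - c + n = u\cdot\nabla c$ against $c$ and integrate over $\Omega$. Since $c>0$ satisfies the homogeneous Neumann condition, integration by parts gives $-\int_\Omega|\nabla c|^2 - \int_\Omega c^2 + \int_\Omega nc = \int_\Omega (u\cdot\nabla c)\,c$. The right-hand side vanishes, because $\int_\Omega (u\cdot\nabla c)\,c = \tfrac12\int_\Omega u\cdot\nabla(c^2) = -\tfrac12\int_\Omega(\nabla\cdot u)\,c^2 = 0$ thanks to $\nabla\cdot u = 0$ and $u=0$ on $\partial\Omega$. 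This yields the key identity
\[
\int_\Omega|\nabla c|^2 + \int_\Omega c^2 = \int_\Omega nc,
\]
and in particular $\int_\Omega|\nabla c|^2 \le \int_\Omega nc$, which will be the device used to absorb the gradient contribution produced in the next step.

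Next I would apply the Moser--Trudinger inequality of Lemma~\ref{Moser-Trudinger1} with $\phi = n$ and $\psi = c$. Here $n$ is nonnegative and not identically zero (since $m>0$) and $c\in W^{1,2}(\Omega)$ with $c>0$, so $|\psi| = c$, $\int_\Omega\phi = m$, and $\bar n = m/|\Omega|$. Recalling from the mass-conservation identity~\eqref{Local.2} that $\int_\Omega c = m$ as well, the inequality becomes
\[
\int_\Omega nc \;\le\; \frac1a\int_\Omega n\ln\frac{n}{\bar n} \;+\; \frac{(1+\epsilon)a m}{8\pi}\int_\Omega|\nabla c|^2 \;+\; M a\, m^3 \;+\; \frac{Mm}{a},
\]
with $M=M(\epsilon,\Omega)$. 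The last two terms are time-independent constants because $m$ is conserved, and the gradient term is controlled by $\int_\Omega|\nabla c|^2\le\int_\Omega nc$ from the identity above.

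It then remains to put this in the required form. For the entropy term I would write $\int_\Omega n\ln\frac{n}{\bar n} = \int_\Omega n\ln n - m\ln\bar n$, where $-m\ln\bar n = m\ln|\Omega| - m\ln m$ is a constant, and use the pointwise estimate $n\ln n \le n\ln(n+1)$ valid for all $n\ge0$, so that $\tfrac1a\int_\Omega n\ln\frac{n}{\bar n} \le \tfrac1a\int_\Omega n\log(n+1) + C$. Choosing the free parameter as $a = m/4\pi$ matches the target leading coefficient $1/a = 4\pi/m$ on $\int_\Omega n\log(n+1)$; after moving the (small multiple of the) $\int_\Omega nc$ contribution to the left and collecting every $m$- and $\Omega$-dependent quantity into a single constant $C>0$ independent of $t$, one arrives at $\int_\Omega nc \le \tfrac{4\pi}{m}\int_\Omega n\log(n+1) + C$, which is the claim.

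The main obstacle is the absorption step. Once $\int_\Omega|\nabla c|^2$ is replaced by $\int_\Omega nc$, the gradient term contributes a multiple of $\int_\Omega nc$ with coefficient proportional to $a m$, so the parameters must be reconciled simultaneously: $a$ is fixed by the desired entropy coefficient $4\pi/m$, while $\epsilon$ and the smallness of the absorbed multiple must be arranged so that the coefficient of $\int_\Omega nc$ stays strictly below $1$ and the stated coefficient is recovered after dividing through. I expect this interplay between the choice of $a$, the Moser--Trudinger correction $\epsilon$, and the conserved mass $m$ to be the delicate point; all remaining steps reduce to bookkeeping of constants.
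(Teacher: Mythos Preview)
Your overall strategy is exactly the paper's: derive the energy identity $\int_\Omega|\nabla c|^2+\int_\Omega c^2=\int_\Omega nc$ from the $c$-equation, apply Lemma~\ref{Moser-Trudinger1} with $\phi=n$ and $\psi=c$, and absorb the resulting $\int_\Omega|\nabla c|^2$ back into the left-hand side.

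The gap is precisely the one you flag at the end, and your proposed choice $a=m/(4\pi)$ does \emph{not} close it. With that choice the coefficient in front of $\int_\Omega|\nabla c|^2$ is $\frac{(1+\epsilon)am}{8\pi}=\frac{(1+\epsilon)m^2}{32\pi^2}$, which exceeds $1$ once $m$ is large, so the absorption fails. No choice of $\epsilon>0$ rescues this, because $\epsilon$ only makes the coefficient larger. The paper instead fixes $\epsilon=1$ and $a=\tfrac{2\pi}{m}$; then the gradient coefficient becomes $\frac{2\cdot(2\pi/m)\cdot m}{8\pi}=\tfrac12$ \emph{independently of $m$}, so one can always move $\tfrac12\int_\Omega nc$ to the left and multiply by $2$.

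Note, however, that with $a=2\pi/m$ one has $1/a=m/(2\pi)$, so the entropy coefficient after doubling is $m/\pi$, not $4\pi/m$ as the lemma claims; the displayed coefficient $\tfrac{2\pi}{m}$ in the paper's first inequality (and hence $\tfrac{4\pi}{m}$ in the conclusion) appears to be a typo for $\tfrac{m}{2\pi}$ (resp.\ $\tfrac{m}{\pi}$). This does not affect the downstream uses of the lemma (Lemmas~\ref{L1} and~\ref{LM}), where only \emph{some} constant multiple of $\int_\Omega n\log(n+1)$ is needed, but it does mean you should not aim for the stated constant $4\pi/m$: take $a=2\pi/m$, accept the coefficient $m/\pi$, and the rest of your argument goes through.
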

\begin{proof}
    Applying Lemma \ref{Moser-Trudinger1} with $\epsilon=1$ and $a= \frac{2\pi}{m}$ and using the fact that
    \begin{align*}
        \int_\Omega |\nabla c|^2+\int_\Omega c^2 = \int_\Omega nc,
    \end{align*}
     there exists $c_1>0$ such that  
    \begin{align*}
        \int_\Omega nc &\leq \frac{2 \pi}{m} \int_\Omega n \log \frac{n}{\bar{n}} + \frac{1}{2}\int_\Omega |\nabla c|^2 +c_1 \notag \\
        &\leq \frac{2 \pi}{m} \int_\Omega n \log n + \frac{1}{2}\int_\Omega n c  - 2 \pi \log \left ( \frac{m}{|\Omega|} \right )  +c_1.
    \end{align*}
    This further entails that there exists $c_2>0$ satisfying 
    \begin{align*}
        \int_\Omega nc \leq \frac{4 \pi}{m} \int_\Omega n \log (n+1) +c_2 \qquad \text{for all } t\in (0,T_{\rm max}),
    \end{align*}
    which completes the proof.
\end{proof}
The following lemma, which provides spatio-temporal averaged bounds for the solutions, constitutes the first main ingredient to the proof of the first main result.

\begin{lemma} \label{L1}
    There exists $C>0$ such that 
    \begin{align}  \label{L1-1}
        \int_t^{t+\tau }\int_\Omega n \log (n+1) \leq C \qquad \text{for all }t\in (0,T_{\rm max} - \tau)
    \end{align}
    and 
    \begin{align} \label{L1-2}
        \int_t^{t+\tau }\int_\Omega \frac{|\nabla n|^2}{(n+1)^2} \leq C \qquad \text{for all }t\in (0,T_{\rm max} - \tau),
    \end{align}
    where $\tau = \min \left \{1, \frac{T_{\rm max}}{2} \right \}$.
\end{lemma}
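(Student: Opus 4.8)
The plan is to test the $n$-equation against $\log(n+1)$ and the $c$-equation against a suitable multiple of $c$, and to combine the two identities so that the bad cross term disappears up to controllable quantities, following the functional-analytic scheme of \cite{WT2023}. First I would compute, using the first equation in \eqref{1} and integrating by parts,
\[
\frac{d}{dt}\int_\Omega \bigl[(n+1)\log(n+1)-(n+1)\bigr]
= -\int_\Omega \frac{|\nabla n|^2}{n+1}
 + \int_\Omega \frac{n S(n)}{n+1}\,\nabla n\cdot\nabla c,
\]
the transport term dropping because $\nabla\cdot u=0$ and $u|_{\partial\Omega}=0$. The first term on the right is the good dissipation; since $\frac{|\nabla n|^2}{(n+1)^2}\le\frac{|\nabla n|^2}{n+1}$ on $\{n\ge 0\}$, controlling it in space-time will yield \eqref{L1-2}. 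For the cross term I would use $\bigl|\frac{nS(n)}{n+1}\bigr|\le \|S\|_{L^\infty}$ (from \eqref{S}) and Young's inequality to write $\int_\Omega\frac{nS(n)}{n+1}\nabla n\cdot\nabla c\le \frac14\int_\Omega\frac{|\nabla n|^2}{n+1}+\|S\|_{L^\infty}^2\int_\Omega|\nabla c|^2$, so that
\[
\frac{d}{dt}\int_\Omega\bigl[(n+1)\log(n+1)-(n+1)\bigr]
+\frac34\int_\Omega\frac{|\nabla n|^2}{n+1}
\le \|S\|_{L^\infty}^2\int_\Omega|\nabla c|^2.
\]

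The heart of the argument is then to absorb $\int_\Omega|\nabla c|^2$. Testing the second equation in \eqref{1} by $c$ and integrating by parts (again the drift term vanishes by $\nabla\cdot u=0$, $u|_{\partial\Omega}=0$) gives the identity $\int_\Omega|\nabla c|^2+\int_\Omega c^2=\int_\Omega nc$ already used in Lemma~\ref{Lnc}. Invoking Lemma~\ref{Lnc}, $\int_\Omega nc\le\frac{4\pi}{m}\int_\Omega n\log(n+1)+C$; however $\frac{4\pi}{m}$ need not be small, so this alone does not close the estimate — this is the main obstacle. To overcome it I would add a small multiple $\delta F(t)$ of the functional $F(t)=-\int_\Omega\log(n+B)$ from \cite{WT2023}: differentiating along the $n$-equation produces $F'(t)=-\int_\Omega\frac{\Delta n}{n+B}+\ldots=\int_\Omega\frac{|\nabla n|^2}{(n+B)^2}\cdot(-1)+\ldots$ up to sign and the $S$-term, and crucially the Moser–Trudinger inequalities of Lemma~\ref{Moser-Trudinger1}–\ref{Moser-Trudinger2} bound $\int_\Omega n\log(n+1)$ by $\frac{1+\epsilon}{2\pi}\{\int_\Omega n\}\int_\Omega\frac{|\nabla n|^2}{(n+M)^2}$ plus lower-order terms. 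Since $m=\int_\Omega n_0$ is fixed, choosing $\epsilon$ small makes the constant in front of $\int_\Omega n\log(n+1)$ controllable, and with $\delta$ chosen appropriately the dissipation $\int_\Omega\frac{|\nabla n|^2}{(n+B)^2}$ coming from $F$ dominates both the $\int_\Omega|\nabla c|^2$ feedback and, via Moser–Trudinger, the $\int_\Omega n\log(n+1)$ term itself.

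Putting this together, I would obtain a combined functional $E(t):=\int_\Omega[(n+1)\log(n+1)-(n+1)]+\delta F(t)$ satisfying a differential inequality of the form $E'(t)+c_1\int_\Omega\frac{|\nabla n|^2}{(n+1)^2}+c_2\int_\Omega n\log(n+1)\le c_3$ for positive constants $c_1,c_2,c_3$ and all $t\in(0,T_{\rm max})$, where one checks $F$ is bounded below (indeed $-\int_\Omega\log(n+B)\le|\Omega|\log\frac{1}{B}$ is not automatic since $n$ is large is fine, but $n$ small makes $\log(n+B)$ bounded below by $\log B$ so $F\le-|\Omega|\log B<\infty$; boundedness above of $F$, i.e. lower bound, follows from $\int_\Omega\log(n+B)\le\int_\Omega(n+B)\le m+B|\Omega|$, hence $F\ge-(m+B|\Omega|)$). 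Integrating the differential inequality over $(t,t+\tau)$ and using that $E$ is bounded below together with the pointwise-in-time bound on $\int_\Omega(n+1)\log(n+1)$ that the same inequality yields (via Lemma~\ref{ODI} applied to the $\log L$ part, after the space-time bound is in hand, or more directly from $E(t+\tau)\ge$ const and $E(t)$ finite) gives both \eqref{L1-1} and \eqref{L1-2}. The one delicate point to get right is the sign and the precise constants when differentiating $F$ and when invoking Lemma~\ref{Moser-Trudinger2} with $M=B$; everything else is bookkeeping with Young's inequality.
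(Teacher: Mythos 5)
There is a genuine gap: nowhere in your argument do you use the structural hypothesis $\lim_{\xi\to\infty}S(\xi)=0$, and without it the estimate cannot close. In your $L\log L$ step you bound the cross term by $\bigl|\tfrac{nS(n)}{n+1}\bigr|\le\|S\|_{L^\infty}$ and Young, producing the feedback $\|S\|_{L^\infty}^2\int_\Omega|\nabla c|^2$; in the $F$-step you leave ``the $S$-term'' unspecified. The feedback loop then runs through $\int_\Omega|\nabla c|^2\le\int_\Omega nc\le\frac{4\pi}{m}\int_\Omega n\log(n+B)+C$ (Lemma~\ref{Lnc}) and \eqref{Moser-Trudinger2-2}, whose composite constant is $\frac{4\pi}{m}\cdot\frac{1+\epsilon}{2\pi}\cdot m=2(1+\epsilon)\ge 2$ \emph{independently of $m$, $\epsilon$ and $B$}. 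So your two proposed sources of smallness do not exist: taking $\epsilon$ small in Moser--Trudinger cannot push this constant below $2$, and weighting $F$ by $\delta$ (small or large) rescales the dissipation $\delta\int_\Omega\frac{|\nabla n|^2}{(n+B)^2}$ and the feedback generated by $F$'s own chemotaxis term by the same factor $\delta$, so the comparison is unchanged. With only $\|S\|_{L^\infty}$ in the Young step one ends up needing roughly $2(1+\epsilon)\|S\|_{L^\infty}^2\le\frac34$, i.e.\ smallness of $S$ itself --- and this must fail in general, since for constant $S$, $u_0\equiv0$, $\nabla\phi\equiv0$ and supercritical mass the system degenerates to the fluid-free Keller--Segel model, where blow-up occurs. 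The indispensable ingredient is Lemma~\ref{LTW}: because $S(\xi)\to0$, one can fix $B$ so large that $\frac{\xi|S(\xi)|}{\xi+B}\le\frac14$, whence the Young step yields the coefficient $\frac1{16}$ in front of $\int_\Omega|\nabla c|^2$, and $\frac1{16}\cdot 2(1+\epsilon)<\frac34$ closes the inequality. That choice of $B$, absent from your proposal, is the whole point of the hypothesis \eqref{S} in two dimensions.

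Two further remarks. First, once Lemma~\ref{LTW} is used, the paper's proof is simpler than your combined functional: it works with $F(t)=-\int_\Omega\log(n+B)$ alone, obtains $-\frac{d}{dt}\int_\Omega\log(n+B)+\frac12\int_\Omega\frac{|\nabla n|^2}{(n+B)^2}\le C$, and integrates directly, using that $0\le\log(n+B)\le n+B$ together with mass conservation bounds $F$ above and below a priori; \eqref{L1-1} then follows from \eqref{L1-2} by a second application of \eqref{Moser-Trudinger2-2}. Your functional $E$ contains $\int_\Omega(n+1)\log(n+1)$, which is not a priori bounded above, so integrating $E'\le C$ over $(t,t+\tau)$ does not by itself give the space--time bounds; you would have to first upgrade the inequality to $E'\le-aE+C$ (possible, since the dissipation you keep controls $\int_\Omega n\log(n+1)$ and hence $E$), and your description of this closing step is circular as written. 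Second, your quantifier on $\delta$ is reversed even in the repaired version: to dominate the $\|S\|_{L^\infty}^2$-sized feedback from the $L\log L$ part one needs a \emph{large} multiple of $F$, not a small one, and this only helps after $F$'s own cross term has been tamed by Lemma~\ref{LTW}.
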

\begin{proof}
    From Lemma \ref{LTW}, there exists $B>1$ such that 
    \begin{align} \label{L1.1}
        \frac{\xi |S(\xi)|}{\xi +B} \leq \frac{1}{4} \qquad \text{for all }\xi \geq 0.
    \end{align}
    Applying the divergence free condition of $u$, integrating by parts in the first equation of \eqref{1} and invoking Young's inequality and \eqref{L1.1}, we obtain that
    \begin{align} \label{L1.2}
        -\frac{d}{dt}\int_\Omega \log (n+B) &= -\int_\Omega \frac{1}{n+B} \left ( \Delta n -\chi \nabla \cdot (n S(n)\nabla c) - u\cdot \nabla c  \right ) \notag \\
        &= - \int_\Omega \frac{|\nabla n|^2}{(n+B)^2}   + \int_\Omega \frac{nS(n)}{(n+B)^2}\nabla n \cdot \nabla c \notag \\
        &\leq - \frac{3}{4}\int_\Omega \frac{|\nabla n|^2}{(n+B)^2} + \int_\Omega \frac{n^2S^2(n)}{(n+B)^2}|\nabla c|^2 \notag \\
        &\leq - \frac{3}{4}\int_\Omega \frac{|\nabla n|^2}{(n+B)^2} + \frac{1}{16} \int_\Omega |\nabla c|^2 \qquad \text{for all } t\in (0,T_{\rm max}). 
    \end{align}
    Multiplying the second equation of $c$ and applying Lemma \ref{Lnc} and Lemma \ref{Moser-Trudinger2}[\eqref{Moser-Trudinger2-2}] with $\epsilon=1$ and $M=1$, we can find $c_1>0$ and $c_2>0$ such that 
    \begin{align} \label{L1.3}
        \int_\Omega |\nabla c|^2 +\int_\Omega c^2 &= \int_\Omega nc \notag \\
        &\leq \frac{4\pi}{m} \int_\Omega n \log (n+B)+c_1 \notag \\
        &\leq 4 \int_\Omega  \frac{|\nabla n|^2}{(n+B)^2}+c_2.
    \end{align}
    Combining \eqref{L1.2} and \eqref{L1.3}, it follows that 
    \begin{align*}
          -\frac{d}{dt}\int_\Omega \log (n+B) + \frac{1}{2}\int_\Omega \frac{|\nabla n|^2}{(n+B)^2} \leq  \frac{c_2}{16} \qquad \text{for all } t\in (0,T_{\rm max}).
    \end{align*}
    Integrating this over $(t,t+\tau)$ and noting that $\log(n+B)\geq 0$ since $B\geq 1$ and $\log(n+B) \leq n+B$, we have
    \begin{align}\label{L1.4}
       \frac{1}{2} \int_t^{t+\tau }\int_\Omega \frac{|\nabla n|^2}{(n+B)^2} &= \frac{c_2\tau }{16}+ \int_\Omega \log(n(\cdot,t+\tau)+B) -  \int_\Omega \log(n(\cdot,t)+B) \notag \\
       &\leq \frac{c_2\tau }{16}+ \int_\Omega \left (n(\cdot,t+\tau)+B \right ) \notag \\
       &\leq c_3 \qquad \text{for all }t\in (0,T_{\rm max}-\tau),
    \end{align}
    where $c_3=\frac{c_2\tau }{16} +m+B|\Omega|$. Therefore, using \eqref{L1.4} and applying Lemma \ref{Moser-Trudinger2}[\eqref{Moser-Trudinger2-2}] with $\epsilon=M=1$, there exist $c_4>0$ and $c_5>0$ such that
    \begin{align}\label{L1.5}
      \int_t^{t+\tau}  \int_\Omega n \log (n+B) &\leq \frac{m}{\pi} \int_t^{t+\tau }\int_\Omega \frac{|\nabla n|^2}{(n+B)^2} +c_4 \notag \\
      &\leq c_5 \qquad \text{for all }t\in (0,T_{\rm max}-\tau).
    \end{align}
    From \eqref{L1.4} and \eqref{L1.5}, and noting that 
    \begin{align*}
        \int_\Omega\frac{|\nabla n|^2}{(n+1)^2} \leq B^2 \int_\Omega\frac{|\nabla n|^2}{(n+B)^2} 
    \end{align*}
    and 
    \begin{align*}
        \int_\Omega n \log(n+1) \leq \int_\Omega n \log(n+B),
    \end{align*}
    \eqref{L1-1} and \eqref{L1-2} are followed. The proof is now complete.
\end{proof}
As a consequence of the previous lemma and the Moser--Trudinger type inequality established in Lemma~\ref{Moser-Trudinger1}, we are now able to derive an $L^2$ bound for $u$. While the proof can be found in Lemmas 4.7 and 4.8 of \cite{Winkler_SIAM}, we provide a detailed proof here for completeness.

\begin{lemma} \label{L2u}
 There exist positive constant $C$ such that
    \begin{align} \label{L2u-1}
        \int_\Omega |u(\cdot,t)|^2 \leq C \qquad \text{for all } t \in (0,T_{\rm max})
    \end{align}
    and 
    \begin{align} \label{L2u-2}
          \int_t^{t+\tau } \int_\Omega |\nabla u(\cdot,s)|^2 \,ds \leq C \qquad \text{for all } t \in (0,T_{\rm max}-\tau)
    \end{align}
    as well as 
    \begin{align} \label{L2u-3}
          \int_t^{t+\tau } \int_\Omega | u(\cdot,s)|^4 \,ds \leq C \qquad \text{for all } t \in (0,T_{\rm max}-\tau),
    \end{align}
     where $\tau = \min \left \{ 1, \frac{T_{\rm max}}{2} \right \}$.
    \end{lemma}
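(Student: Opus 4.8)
The plan is to test the Navier--Stokes subsystem against $u$, note that only the buoyancy term survives on the right-hand side, and control it via the Moser--Trudinger inequality of Lemma~\ref{Moser-Trudinger1}. This produces a differential inequality for $\int_\Omega |u|^2$ whose inhomogeneity is precisely of the type estimated in Lemma~\ref{L1}; Lemma~\ref{ODI} then yields \eqref{L2u-1}, and the remaining two estimates follow by re-integration and a two-dimensional Ladyzhenskaya-type inequality. (When $m=0$ one has $n\equiv0$ and the argument below degenerates to the classical energy estimate for the homogeneous Stokes/Navier--Stokes system, so we may assume $m>0$.)

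\emph{Energy identity and absorption.} Multiplying the third equation in \eqref{1} (with $\kappa=1$) by $u$ and integrating over $\Omega$, the pressure term and the convective term $\int_\Omega (u\cdot\nabla)u\cdot u$ both vanish because $\nabla\cdot u=0$ and $u|_{\partial\Omega}=0$, so that
\[
\tfrac12\frac{d}{dt}\int_\Omega |u|^2 + \int_\Omega |\nabla u|^2 = \int_\Omega n\,\nabla\phi\cdot u \le \|\nabla\phi\|_{L^\infty(\Omega)}\int_\Omega n\,|u|.
\]
I then invoke Lemma~\ref{Moser-Trudinger1} with $\phi:=n$, $\psi:=|u|$ (so that $|\nabla\psi|\le|\nabla u|$ a.e.), $\epsilon:=1$, and a parameter $a>0$ to be fixed, bounding $\int_\Omega n|u|$ by
\[
\tfrac1a\int_\Omega n\log\tfrac{n}{\bar n} + \tfrac{am}{4\pi}\int_\Omega |\nabla u|^2 + Mam\Bigl(\int_\Omega |u|\Bigr)^2 + \tfrac{Mm}{a},
\]
where $m=\int_\Omega n$ is time-independent by \eqref{Local.2} and $M=M(\Omega)$. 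Since $u=0$ on $\partial\Omega$, Poincar\'e's inequality gives $\int_\Omega |u|^2\le C_P\int_\Omega |\nabla u|^2$, hence $(\int_\Omega |u|)^2\le|\Omega|C_P\int_\Omega |\nabla u|^2$; because $M$ does not depend on $a$, choosing $a$ small enough forces both $\|\nabla\phi\|_{L^\infty}\tfrac{am}{4\pi}$ and $\|\nabla\phi\|_{L^\infty}Mam|\Omega|C_P$ to be at most $\tfrac14$, so these two contributions are absorbed into $\int_\Omega |\nabla u|^2$. Using in addition $n\log n\le n\log(n+1)$ pointwise, so that $\int_\Omega n\log\tfrac{n}{\bar n}\le \int_\Omega n\log(n+1)+c_0$, I arrive at
\[
\frac{d}{dt}\int_\Omega |u|^2 + \int_\Omega |\nabla u|^2 \le c_1\int_\Omega n\log(n+1) + c_2 .
\]

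\emph{Conclusion.} Writing $y(t):=\int_\Omega |u(\cdot,t)|^2$ and estimating a further portion of the dissipation by $\int_\Omega |\nabla u|^2\ge C_P^{-1}y$, this reads $y'(t)+a_0 y(t)\le h(t)$ with $h(t):=c_1\int_\Omega n\log(n+1)+c_2$; by \eqref{L1-1} the function $h$ satisfies $\int_t^{t+\tau}h\le c_3$ for all $t\in(0,T_{\max}-\tau)$, $\tau=\min\{1,T_{\max}/2\}$. Lemma~\ref{ODI}, applied with $t_0=0$ and the finite datum $y(0)=\|u_0\|_{L^2(\Omega)}^2$, gives \eqref{L2u-1}. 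Integrating the displayed differential inequality over $(t,t+\tau)$ and inserting the uniform bound on $y$ together with the bounded $\tau$-average of $h$ yields \eqref{L2u-2}. Finally, the two-dimensional Ladyzhenskaya (Gagliardo--Nirenberg) inequality $\|u\|_{L^4(\Omega)}^4\le C\|u\|_{L^2(\Omega)}^2\|\nabla u\|_{L^2(\Omega)}^2$, valid since $u(\cdot,t)\in W^{1,2}_0(\Omega;\mathbb{R}^2)$, integrated over $(t,t+\tau)$ and combined with \eqref{L2u-1} and \eqref{L2u-2}, produces \eqref{L2u-3}.

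\emph{Main obstacle.} The only delicate point is that the Moser--Trudinger inequality forces a term comparable to $\int_\Omega n\log(n+1)$ into the estimate, for which no pointwise-in-time bound is yet available --- only the spatio-temporal estimate \eqref{L1-1}. It therefore cannot be absorbed into $y$ and must be carried as the right-hand side $h(t)$ in Lemma~\ref{ODI}; correspondingly, one must verify that the free parameter $a$ in Lemma~\ref{Moser-Trudinger1} can be chosen small enough to absorb \emph{simultaneously} the gradient term $a\int_\Omega|\nabla u|^2$ and, after Poincar\'e, the quadratic term $a(\int_\Omega|u|)^2$ into the dissipation, which is possible precisely because $M=M(\epsilon,\Omega)$ is independent of $a$.
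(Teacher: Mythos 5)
Your proposal is correct and follows essentially the same route as the paper: energy identity for $u$, Lemma~\ref{Moser-Trudinger1} to bound $\int_\Omega n|u|$, absorption of the gradient and Poincar\'e terms by a small choice of $a$ (possible since $M$ is independent of $a$), then Lemma~\ref{L1} together with Lemma~\ref{ODI} for \eqref{L2u-1}, re-integration for \eqref{L2u-2}, and the Ladyzhenskaya/Gagliardo--Nirenberg inequality for \eqref{L2u-3}. The only cosmetic difference is that you apply the Moser--Trudinger inequality directly to $\psi=|u|$ (using $|\nabla|u||\le|\nabla u|$), whereas the paper applies it componentwise to $u_1$ and $u_2$; both are valid.
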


\begin{proof}
    Thanks to Poincaré inequality, there exists $c_1>0$ such that 
    \begin{align} \label{L2u.1}
        \int_\Omega |\nabla u|^2 \geq c_1 \int_\Omega |u|^2 \qquad \text{for all }t\in (0,T_{\rm max}).
    \end{align}
  Abbreviating $c_2= \left \| \nabla \phi \right \|_{L^\infty(\Omega)} $, testing the third equation of \eqref{1} against $u=(u_1,u_2)$ and invoking Young's inequality, we obtain that 
    \begin{align}\label{L2u.2}
        \frac{d}{dt}\int_\Omega |u|^2 + \int_\Omega |\nabla u|^2 &= \int_\Omega n \nabla \phi \cdot u  \notag \\
        &\leq c_2 \int_\Omega n |u|  \notag \\
        &\leq c_2 \int_\Omega n |u_1| +  c_2 \int_\Omega n |u_2|.
    \end{align}
   By Hölder's inequality and \eqref{L2u.1}, it follows that 
    \begin{align*}
        \left \{ \int_\Omega |u| \right \}^2 \leq |\Omega| \int_\Omega |u|^2 \leq c_1|\Omega| \int_\Omega |\nabla u|^2.
    \end{align*}
    Using this and Lemma \ref{Moser-Trudinger2-1} with $\epsilon=1$ and $a>0$ yields that 
    \begin{align}\label{L2u.3}
         c_2 \int_\Omega n |u_1| +  c_2 \int_\Omega n |u_2| &\leq \frac{2c_2}{a}\int_\Omega n \ln \frac{n}{\bar{n}}+ \frac{c_2a}{2 \pi} \cdot \left \{ \int_\Omega n \right \} \cdot \int_\Omega |\nabla u|^2 \notag \\
         &\quad+2Mac_2 \cdot \left \{ \int_\Omega n \right \}   \cdot \left \{ \int_\Omega |u| \right \}^2 +\frac{2c_2M}{a} \int_\Omega n \notag \\
         &\leq  \frac{2c_2}{a}\int_\Omega n \ln \frac{n}{\bar{n}}+ \frac{c_2a}{2 \pi} \cdot \left \{ \int_\Omega n \right \} \cdot \int_\Omega |\nabla u|^2 \notag \\
         &\quad+2Mac_1c_2 |\Omega| \cdot \left \{ \int_\Omega n \right \}   \cdot  \int_\Omega |\nabla u|^2  +\frac{2c_2M}{a} \int_\Omega n.
    \end{align}
    Employing Lemma \ref{Local}[\eqref{Local.2}] and choosing $a$ such that $\frac{c_2a m}{2 \pi}+2Mac_1c_2|\Omega|m= \frac{1}{2}$ where $m =\int_\Omega n_0 $ 
    \begin{align}\label{L2u.4}
         c_2 \int_\Omega n |u_1| +  c_2 \int_\Omega n |u_2| &\leq \frac{2c_2}{a}\int_\Omega n \ln n + \frac{1}{2}\int_\Omega |\nabla u|^2 -\frac{2c_2}{a} \left \{  \int_\Omega n \right \} \ln \left ( \int_\Omega n \right ) \notag \\
         &\quad + \frac{2c_2 \ln|\Omega|}{a} \int_\Omega n + \frac{2c_2Mm}{a} \notag \\
         &\leq \frac{2c_2}{a}\int_\Omega n \ln (n+1) + \frac{1}{2}\int_\Omega |\nabla u|^2 +c_3,
    \end{align}
    where the last inequality holds due to $x\ln x\geq -\frac{1}{e}$ for all $x>0$ and \[c_3= \frac{2c_2a}{e}+ \frac{2c_2m}{a} \max \left \{ \ln |\Omega|,0 \right \}+ \frac{2c_2Mm}{a}.\] Combining \eqref{L2u.1}, \eqref{L2u.2} \eqref{L2u.4}, we arrive at
    \begin{align}\label{L2u.5}
        \frac{d}{dt}\int_\Omega |u|^2+\frac{c_1}{4}\int_\Omega |u|^2+\frac{1}{4}\int_\Omega |\nabla u|^2 \leq \frac{2c_2}{a}\int_\Omega n \ln (n+1) +c_3 \qquad \text{for all }t\in (0,T_{\rm max}).
    \end{align}
   This, together with Lemma \ref{L1}[\eqref{L1-1}] and Lemma \ref{ODI}  implies that \eqref{L2u-1}. Integrating \eqref{L2u.5} over $(t,t+\tau )$ and employing \eqref{L2u-1} and Lemma \ref{L1}[\eqref{L1-1}], there exists $c_4>0$ such that
    \begin{align*}
        \frac{1}{4}\int_t^{t+\tau }\int_\Omega |\nabla u|^2 \leq \frac{2c_2}{a} \int_t^{t+\tau }\int_\Omega n \log (n+1) +c_3 \tau +\int_\Omega |u(\cdot,t)|^2 \leq c_4\quad\text{for all }t\in (0,T_{\rm max}-\tau),
    \end{align*}
    which entails \eqref{L2u-2}.  By invoking Gagliardo–Nirenberg interpolation inequality and using \eqref{L2u-1} and \eqref{L2u-2}, one can find $c_5$ and $c_6$ 
    \begin{align*}
        \int_t^{t+\tau }\int_\Omega | u|^4 &\leq  c_5 \int_t^{t+\tau } \left (\int_\Omega |u|^2 \right ) \cdot \int_\Omega |\nabla u|^2 +c_5 \int_t ^{t+\tau} \left ( \int_\Omega |u|^2 \right )^2 \notag \\
        &\leq c_6 \qquad \text{for all }t\in (0,T_{\rm max}-\tau).
    \end{align*}
    This proves \eqref{L2u-3} and thereby completes the proof.
\end{proof}

In preparation to proof of Lemma \ref{LlolL}, we establish the following lemma allowing us to control quantities involving $\Delta c$.
\begin{lemma} \label{LM}
    There exist $C_1>0$ and $C_2>0$ such that 
    \begin{align} \label{LM-1}
        \int_\Omega |\Delta c|^2 \leq 4 \int_\Omega n^2 + C_1 \left \{ \int_\Omega |u|^4 \right \} \cdot\left \{ \int_\Omega n \log (n+1) \right \}+C_2 \int_\Omega |u|^4,
    \end{align}
    for all $t\in (0,T_{\rm max})$
\end{lemma}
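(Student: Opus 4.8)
The plan is to use the second equation of~\eqref{1} in the form $\Delta c = (c-n) + u\cdot\nabla c$ and to estimate the three resulting contributions separately; the only genuinely delicate term is the one carrying the velocity field. The starting point is the identity
\[
\int_\Omega |\nabla c|^2 + \int_\Omega c^2 = \int_\Omega nc ,
\]
obtained by testing the second equation against $c$ and discarding the convective term (since $\nabla\cdot u = 0$ and $u|_{\partial\Omega}=0$). In particular $\int_\Omega c^2 \le \int_\Omega nc$, which yields the \emph{sharp} pointwise-in-time bound
\[
\int_\Omega (c-n)^2 = \int_\Omega n^2 - 2\int_\Omega nc + \int_\Omega c^2 \le \int_\Omega n^2 .
\]
This sharpness is needed: with the constant $4$ in~\eqref{LM-1} there is no room left to estimate $\int_\Omega c^2$ by any non-negligible multiple of $\int_\Omega n\log(n+1)$. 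Applying $(a+b)^2\le 2a^2+2b^2$ to $\Delta c=(c-n)+u\cdot\nabla c$ and integrating gives
\[
\int_\Omega |\Delta c|^2 \le 2\int_\Omega n^2 + 2\int_\Omega |u|^2|\nabla c|^2 .
\]

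To absorb $\int_\Omega |u|^2|\nabla c|^2$ I would combine H\"older's inequality with the two-dimensional Gagliardo--Nirenberg inequality and $L^2$-elliptic regularity for the Neumann Laplacian, which together give $\|\nabla c\|_{L^4}^2 \le C\|\nabla c\|_{L^2}\|\Delta c\|_{L^2} + C\big(\|\nabla c\|_{L^2}^2 + \|c\|_{L^2}^2\big)$; recalling that $\|\nabla c\|_{L^2}^2+\|c\|_{L^2}^2=\int_\Omega nc$ (hence also $\|\nabla c\|_{L^2}^2 \le \int_\Omega nc$), this leads to
\[
\int_\Omega |u|^2|\nabla c|^2 \le \|u\|_{L^4}^2\|\nabla c\|_{L^4}^2 \le C\|u\|_{L^4}^2\Big(\int_\Omega nc\Big)^{1/2}\|\Delta c\|_{L^2} + C\|u\|_{L^4}^2\int_\Omega nc .
\]
Then I would apply Young's inequality twice with carefully chosen weights: on the first term, weight $\delta$ on $\|\Delta c\|_{L^2}^2$ (to be absorbed later), leaving a term $C_\delta\|u\|_{L^4}^4\int_\Omega nc$; on the second term, a \emph{large} weight $\eta$ on $\|u\|_{L^4}^4$, leaving only $\tfrac{C^2}{4\eta}\int_\Omega nc$. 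This produces
\[
\int_\Omega |u|^2|\nabla c|^2 \le \delta\|\Delta c\|_{L^2}^2 + (C_\delta+\eta)\,\|u\|_{L^4}^4\int_\Omega nc + \frac{C^2}{4\eta}\int_\Omega nc .
\]

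It remains to convert the $c$-integrals via Lemma~\ref{Lnc}, $\int_\Omega nc \le \tfrac{4\pi}{m}\int_\Omega n\log(n+1)+C$. In the terms carrying the factor $\|u\|_{L^4}^4$ this directly gives the admissible contributions $C_1\{\int_\Omega |u|^4\}\{\int_\Omega n\log(n+1)\} + C_2\int_\Omega |u|^4$, the values of $C_1,C_2$ being irrelevant; in the bare term $\tfrac{C^2}{4\eta}\int_\Omega nc$ I would instead use $n\log(n+1)\le n^2$ pointwise together with $\int_\Omega n^2 \ge m^2/|\Omega|$ (Cauchy--Schwarz) to bound it by $\varepsilon(\eta)\int_\Omega n^2$ with $\varepsilon(\eta)\to 0$ as $\eta\to\infty$. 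Collecting the displays, absorbing $\delta\|\Delta c\|_{L^2}^2$ into the left-hand side, and choosing first $\eta$ large (so $\varepsilon(\eta)\le 1$, say) and then $\delta\le\tfrac18$, the coefficient of $\int_\Omega n^2$ becomes $\tfrac{2+\varepsilon(\eta)}{1-2\delta}\le 4$, and~\eqref{LM-1} follows. The only real difficulty is this constant-tracking: the factor $4$ is comfortable (the argument in fact gives $2+o(1)$), but it forces every lower-order piece to be peeled off either with an $\int_\Omega |u|^4$ factor or with a coefficient that can be made small, which is precisely why one must use the sharp bound $\int_\Omega (c-n)^2 \le \int_\Omega n^2$ in place of the crude $\int_\Omega (c-n)^2 \le 2\int_\Omega c^2 + 2\int_\Omega n^2$.
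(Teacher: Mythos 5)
Your proposal is correct and follows essentially the same strategy as the paper: both arguments reduce matters to the single delicate term $\int_\Omega |u|^2|\nabla c|^2$, bound it by $\|u\|_{L^4}^2\|\nabla c\|_{L^4}^2 \lesssim \|u\|_{L^4}^2\|\Delta c\|_{L^2}\|\nabla c\|_{L^2}$ via Gagliardo--Nirenberg plus elliptic regularity, absorb the resulting $\|\Delta c\|_{L^2}^2$ piece, and convert $\|\nabla c\|_{L^2}^2\le\int_\Omega nc$ into the $n\log(n+1)$ quantity through Lemma~\ref{Lnc}. The only differences are in bookkeeping: the paper multiplies the equation by $\Delta c$ and applies Young's inequality with weight $\tfrac14$ twice, so the factor $4$ appears at once and neither the sharp bound $\int_\Omega(c-n)^2\le\int_\Omega n^2$, nor the large-parameter treatment of the leftover $\int_\Omega nc$, nor the lower-order Gagliardo--Nirenberg terms you carry are needed, whereas your squaring of $\Delta c=(c-n)+u\cdot\nabla c$ with careful constant-tracking reaches the same conclusion.
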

\begin{proof}
    Multiplying the second equation of \eqref{1} by $\Delta c$ and applying Young's inequality yields
    \begin{align} \label{LM.2}
        \int_\Omega |\Delta c|^2 &= \int_\Omega u \cdot \nabla c \, \Delta c - \int_\Omega n \Delta c - \int_\Omega |\nabla c|^2 \notag \\
        &\leq \frac{1}{2}\int_\Omega |\Delta c|^2 + \int_\Omega |u|^2 |\nabla c|^2 + \int_\Omega n^2.
    \end{align}
     By Gagliardo–Nirenberg interpolation inequality with elliptic regularity theory, there exists $c_1>0$ such that 
    \begin{align} \label{LM.1}
        \left \| \nabla c \right \|^2_{L^4(\Omega)} \leq  c_1 \left \| \Delta c \right \|_{L^2(\Omega)} \left \| \nabla c \right \|_{L^2(\Omega)} \qquad \text{for all } t\in (0,T_{\rm max}).
    \end{align}
    In light of Hölder's inequality, Young's inequality, \eqref{LM.1} and Lemma \ref{Lnc}, there exists $c_2>0$ such that  
    \begin{align}\label{LM.3}
        \int_\Omega |u|^2 |\nabla c|^2 &\leq \left \| u \right \|_{L^4(\Omega)}^2\left \| \nabla c \right \|_{L^4(\Omega)}^2  \notag\\
        &\leq c_1  \left \| u \right \|_{L^4(\Omega)}^2\left \| \nabla c \right \|_{L^2(\Omega)} \left \| \Delta c \right \|_{L^2(\Omega)} \notag \\
        &\leq \frac{1}{4} \int_\Omega |\Delta c|^2 + c_1^2 \left \{ \int_\Omega |u|^4 \right \} \cdot \int_\Omega |\nabla c|^2 \notag \\
              &\leq \frac{1}{4} \int_\Omega |\Delta c|^2 + c_1^2 \left \{ \int_\Omega |u|^4 \right \} \cdot \int_\Omega nc \notag \\
        &\leq \frac{1}{4} \int_\Omega |\Delta c|^2 + c_1^2 \left \{ \int_\Omega |u|^4 \right \} \cdot \left \{ \frac{4\pi}{m}\int_\Omega n \log(n+1)+c_2 \right \},
    \end{align}
    where we used the the fact that $\int_\Omega |\nabla c|^2 \leq \int_\Omega nc$ in the second last inequality. Combining \eqref{LM.2} and \eqref{LM.3}, we obtain \eqref{LM-1}, which finishes the proof.
\end{proof}

To prepare for Lemma \ref{Lpn}, we derive an uniform in time $L^p$ bound for $c$. 

\begin{lemma} \label{Lpc}
    For any $p>1$, there exists $C=C(p)>0$ such that 
    \begin{align*}
        \int_\Omega c^p(\cdot,t) \leq C \qquad \text{for all }t \in (0,T_{\rm max}).
    \end{align*}
\end{lemma}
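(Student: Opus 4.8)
The plan is to test the second equation of \eqref{1} against $p c^{p-1}$ for $p > 1$ and run a standard energy argument, absorbing the convective contribution via Young's inequality after exploiting the $L^4$-in-time control of $u$ from Lemma~\ref{L2u}. First I would compute
\[
\frac{d}{dt}\int_\Omega c^p + p(p-1)\int_\Omega c^{p-2}|\nabla c|^2 + p\int_\Omega c^p = p\int_\Omega n c^{p-1} + p\int_\Omega (u\cdot\nabla c)\,c^{p-1},
\]
where the last term vanishes because $\nabla\cdot u = 0$ and $u|_{\partial\Omega} = 0$ (so $\int_\Omega u\cdot\nabla(c^p/p) = 0$). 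Hence there is no convective obstruction at all, and the right-hand side reduces to $p\int_\Omega n c^{p-1}$, leaving
\[
\frac{d}{dt}\int_\Omega c^p + \frac{4(p-1)}{p}\int_\Omega \bigl|\nabla c^{p/2}\bigr|^2 + p\int_\Omega c^p = p\int_\Omega n c^{p-1}.
\]

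Next I would control $\int_\Omega n c^{p-1}$. The natural route is an induction on $p$, or more directly a use of the elliptic/parabolic smoothing already implicit in the equation together with the $L\log L$ bound on $n$ from Lemma~\ref{L1}. Concretely, one can write $\int_\Omega n c^{p-1} \le \varepsilon\int_\Omega c^{(p-1)q} + C_\varepsilon\int_\Omega n^{q'}$ via Young with exponents $q,q'$; however, since we do not yet have an $L^{q'}$ bound on $n$ for $q'>1$, it is cleaner to exploit the known spatio-temporal bound $\int_t^{t+\tau}\int_\Omega n\log(n+1) \le C$ and the fact that $\int_\Omega n = m$. A convenient device is to split $n = n\mathbf 1_{\{n\le K\}} + n\mathbf 1_{\{n>K\}}$: on $\{n\le K\}$ one has $\int n c^{p-1} \le K\int c^{p-1} \le K|\Omega|^{1/p}(\int c^p)^{(p-1)/p}$, while on $\{n>K\}$ one bounds $n \le n\log(n+1)/\log(K+1)$ and combines with an $L^\infty$-in-time-weak control of $c$. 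In fact, the simplest self-contained argument is to first establish the $p=2$ case using the Gagliardo--Nirenberg inequality $\|c\|_{L^4}^2 \le C\|\nabla c\|_{L^2}\|c\|_{L^2} + C\|c\|_{L^2}^2$ together with $\int_\Omega|\nabla c|^2 + \int_\Omega c^2 = \int_\Omega nc$ and Lemma~\ref{Lnc}, which already gives a differential inequality of the form $y' + y \le f(t)$ with $\int_t^{t+\tau} f \le C$, hence $\int_\Omega c^2 \le C$ by Lemma~\ref{ODI}; then bootstrap from $L^q$ to $L^{2q}$ using $\int_\Omega n c^{2q-1} \le \frac12\int_\Omega n^2 + \frac12\int_\Omega c^{2(2q-1)}$ is not directly available, so instead one uses the maximal Sobolev regularity for the elliptic equation $-\Delta c + c = n - u\cdot\nabla c$ to gain integrability step by step.

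The cleanest approach, and the one I would actually carry out, is an induction: assuming $\|c(\cdot,t)\|_{L^q} \le C_q$ for all $t$, I would use the elliptic equation $-\Delta c + c = n - u\cdot\nabla c = n - \nabla\cdot(uc)$ and the smoothing estimate $\|c\|_{W^{2,q}} \le C(\|n\|_{L^q} + \|uc\|_{L^q})$... but this again needs $\|n\|_{L^q}$. Thus the genuine mechanism must be the coupling: the right approach is to differentiate $\int_\Omega c^p$, bound $p\int_\Omega nc^{p-1}$ using the two-dimensional Gagliardo--Nirenberg inequality $\|c^{p/2}\|_{L^{2r}}^2 \le C\|\nabla c^{p/2}\|_{L^2}^{2\theta}\|c^{p/2}\|_{L^{2/p\cdot ?}}^{2(1-\theta)} + \dots$ so that $\int_\Omega nc^{p-1} = \int_\Omega n\,(c^{p/2})^{2(p-1)/p}$ is estimated by Hölder with the $L\log L$ norm of $n$ (which controls $n$ in the Orlicz space whose dual is $e^{L}$, and $e^L$ contains all $L^r$), yielding $\int_\Omega nc^{p-1} \le \delta\int_\Omega|\nabla c^{p/2}|^2 + C_\delta(\int_\Omega n\log(n+1) + 1)^{\text{power}}\cdot(\int_\Omega c^p)^{\text{power}}$. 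After absorbing the gradient term, one obtains $y'(t) \le h(t)y(t) + g(t)$ with $y = \int_\Omega c^p$ and $h,g$ having bounded $\tau$-integrals by Lemma~\ref{L1}, while the $\tau$-integrability of $y$ itself follows inductively from the previous step (or from $p=1$, where $\int_\Omega c = m$); then Lemma~\ref{ODI'} delivers the uniform bound. The main obstacle, and the only real subtlety, is arranging the exponents in the Gagliardo--Nirenberg/Hölder step so that the power of $\int_\Omega c^p$ on the right is exactly one (making Lemma~\ref{ODI'} applicable) — in two dimensions this works out precisely because $\|w\|_{L^{2(p-1)/p}}$-type norms of $c^{p/2}$ interpolate between $L^2(\Omega)$ (controlled by $\int c^p$ raised to a fractional power $< 1$, usable after splitting off the low part) and $\dot H^1$, and the $\varepsilon$-room in the Orlicz duality $\int n w \le \int n\log(n+1) + \int e^{w}$ absorbs any logarithmic loss via a Moser--Trudinger inequality of the type already recorded in Lemma~\ref{Moser-Trudinger1}.
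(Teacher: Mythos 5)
There is a genuine gap, and it starts at the very first display: the second equation of \eqref{1} is \emph{elliptic} in $c$, namely $u\cdot\nabla c=\Delta c-c+n$, with no term $c_t$. Consequently the identity you write,
\begin{align*}
\frac{d}{dt}\int_\Omega c^p+p(p-1)\int_\Omega c^{p-2}|\nabla c|^2+p\int_\Omega c^p=p\int_\Omega nc^{p-1}+p\int_\Omega (u\cdot\nabla c)\,c^{p-1},
\end{align*}
is not what testing the equation by $pc^{p-1}$ produces: there is no $\frac{d}{dt}\int_\Omega c^p$ at all, only the pointwise-in-time elliptic identity $p(p-1)\int_\Omega c^{p-2}|\nabla c|^2+p\int_\Omega c^p=p\int_\Omega nc^{p-1}$ (the convective term indeed vanishes, as you say). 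This removes the entire scaffolding of your argument: there is no differential inequality $y'\le h\,y+g$ for $y=\int_\Omega c^p$, so Lemmas~\ref{ODI} and~\ref{ODI'} cannot be invoked, and the spatio-temporal bound $\int_t^{t+\tau}\int_\Omega n\log(n+1)\le C$ from Lemma~\ref{L1} cannot be converted into a bound at each fixed time through an ODE argument. Since the estimate must be obtained at each instant $t$ using only information available at that instant, your remaining options reduce to bounding $\int_\Omega nc^{p-1}$ pointwise in time with only $\int_\Omega n=m$ (and, at this stage of the paper, not yet a uniform-in-time $L\log L$ or $L^q$ bound on $n$); your sketch of how to do this via Orlicz duality and Moser--Trudinger is explicitly left unfinished (``the main obstacle\dots is arranging the exponents''), so the key step is not actually carried out.

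The paper's proof sidesteps all of this with a sublinear test function: multiply the elliptic equation by $(c+1)^{q-1}$ with $q\in(0,1)$. The convective term vanishes as in your computation, the term $-\int_\Omega n(c+1)^{q-1}$ has a favorable sign and can be dropped, and $\int_\Omega c(c+1)^{q-1}\le\int_\Omega c=m$ by \eqref{Local.2}; this yields a uniform bound on $\int_\Omega\bigl|\nabla (c+1)^{q/2}\bigr|^2$, and then the two-dimensional Sobolev embedding applied to $(c+1)^{q/2}$ gives $\int_\Omega(c+1)^p\le C(p)$ for every $p>1$, with no need for any information on $n$ beyond its mass. If you want to salvage your approach, you would have to either adopt this sublinear-exponent trick or first establish a pointwise-in-time control of $\int_\Omega nc^{p-1}$ (e.g.\ after the uniform $L\log L$ bound of Lemma~\ref{LlolL} is available), neither of which is done in your proposal.
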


\begin{proof}
        Testing the second equation of \eqref{1} by $(c+1)^{q-1}$ with $q\in (0,1)$, we find that 
    \begin{align*}
        \left (1 -q \right ) \int_\Omega (c+1)^{q-2} |\nabla c|^2 &= \int_\Omega c(c+1)^{q-1} - \int_\Omega n (c+1)^{q-1} \notag  \\
        &\leq \int_\Omega c\qquad \text{for all }t\in (0,T_{\rm max}).
    \end{align*}
    This, together with Lemma \ref{Local}[\eqref{Local.2}] entails that 
    \begin{align*}
        \int_\Omega |\nabla (c+1)^\frac{q}{2}|^2 \leq c_1 \qquad \text{for all }t\in (0,T_{\rm max}) ,
    \end{align*}
    for some $c_1>0$. Owning to Sobolev's inequality and Lemma \ref{Local}[\eqref{Local.2}], there exist $c_2>0$ and $c_3>0$ such that
    \begin{align*}
        \int_\Omega (c+1)^p &\leq  c_2 \left (  \int_\Omega |\nabla (c+1)^\frac{q}{2}|^2 \right )^{\frac{p}{q}} + c_2\left (\int_\Omega (c+1) \right )^p \notag \\
        &\leq c_3  \qquad \text{for all }t\in (0,T_{\rm max}),
    \end{align*}
    which completes the proof.
\end{proof}
\subsection{$L^p$ boundedness for $n$} \label{S3-2}
We are now ready to present the second main ingredient of our analysis for the proof of Theorem \ref{thm1}.
\begin{lemma} \label{LlolL}
    There exists $C>0$ such that 
    \begin{align} \label{LlolL-1}
        \int_\Omega n \log (n+1) \leq C \qquad \text{for all }t\in (0,T_{\rm max})
    \end{align}
    and 
    \begin{align} \label{LlolL-2}
        \int_t ^{t+\tau } \int_\Omega n^2 \leq C\qquad \text{for all }t\in (0,T_{\rm max}-\tau ),
    \end{align}
    where $\tau = \min \left \{1, \frac{T_{\rm max}}{2} \right \}$.
\end{lemma}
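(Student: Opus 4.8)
The plan is to reduce the whole statement to a single differential inequality for
\[
y(t):=\int_\Omega n\log(n+1),\qquad t\in(0,T_{\rm max}),
\]
of the form $y'(t)+c_0\int_\Omega n^2\le h(t)\,y(t)+g(t)$, where $c_0>0$ is a fixed constant (depending only on $|\Omega|$ and $m$) and $h(t)=C\int_\Omega|u(\cdot,t)|^4$, $g(t)=C\int_\Omega|u(\cdot,t)|^4+C$ are nonnegative continuous functions whose $\tau$‑translated integrals are bounded by virtue of \eqref{L2u-3}. Granting this, \eqref{LlolL-1} follows at once from Lemma~\ref{ODI'}, since \eqref{L1-1} supplies the remaining hypothesis $\int_t^{t+\tau}y\le C$; and \eqref{LlolL-2} then follows by integrating the differential inequality over $(t,t+\tau)$, discarding the nonnegative term $y(t+\tau)$ and invoking \eqref{LlolL-1} together with \eqref{L2u-3}.

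To produce the differential inequality I would test the first equation of \eqref{1} against $\log(n+1)+\tfrac{n}{n+1}$. The convective term drops because $\nabla\cdot u=0$ and $u|_{\partial\Omega}=0$, and after one integration by parts in the chemotactic term (using $\partial_\nu c=0$) one arrives at
\[
\frac{d}{dt}\int_\Omega n\log(n+1)+\int_\Omega\frac{n+2}{(n+1)^2}|\nabla n|^2=-\int_\Omega\chi(n)\,\Delta c,\qquad \chi(\xi):=\int_0^\xi\frac{(s+2)s\,S(s)}{(s+1)^2}\,ds,
\]
which is precisely the ``indirect control via $\Delta c$'' alluded to in the introduction. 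On the left, $\frac{n+2}{(n+1)^2}\ge\frac{1}{n+1}$ yields $\int_\Omega\frac{n+2}{(n+1)^2}|\nabla n|^2\ge 4\int_\Omega|\nabla\sqrt{n+1}|^2$, and the two–dimensional Gagliardo--Nirenberg inequality applied to $\sqrt{n+1}$, whose $L^2$‑norm is fixed by \eqref{Local.2}, upgrades this to $\int_\Omega\frac{n+2}{(n+1)^2}|\nabla n|^2\ge 2c_0\int_\Omega n^2-C$ for a suitable $c_0>0$.

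Everything then hinges on estimating $-\int_\Omega\chi(n)\Delta c$, and this is where the decay condition $\lim_{\xi\to\infty}S(\xi)=0$ of \eqref{S} is exploited. Given $\epsilon>0$, choose $A_\epsilon$ with $|S|\le\epsilon$ on $(A_\epsilon,\infty)$; since $0\le\frac{(s+2)s}{(s+1)^2}<1$, splitting the integral defining $\chi$ at $A_\epsilon$ gives the pointwise bound $|\chi(\xi)|\le\|S\|_{L^\infty((0,\infty))}A_\epsilon+\epsilon\,\xi$ for all $\xi\ge0$. Hence $\bigl|\int_\Omega\chi(n)\Delta c\bigr|\le\|S\|_{L^\infty((0,\infty))}A_\epsilon\int_\Omega|\Delta c|+\epsilon\int_\Omega n|\Delta c|$, and Young's inequality turns each summand into a small multiple of $\int_\Omega|\Delta c|^2$ plus a small multiple of $\int_\Omega n^2$ plus a fixed constant (the latter coming from the $A_\epsilon$ piece). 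Substituting Lemma~\ref{LM} for $\int_\Omega|\Delta c|^2$ and optimizing the Young parameters leads to
\[
\Bigl|\int_\Omega\chi(n)\Delta c\Bigr|\le(4\eta+2\epsilon)\int_\Omega n^2+C\Bigl\{\int_\Omega|u|^4\Bigr\}y(t)+C\int_\Omega|u|^4+C(\epsilon,\eta),
\]
and choosing $\eta,\epsilon$ small enough that $4\eta+2\epsilon<c_0$ (which then fixes $A_\epsilon$ and all remaining constants) lets the $\int_\Omega n^2$ term be absorbed into $2c_0\int_\Omega n^2$ on the left. This is exactly the sought differential inequality, after which Lemma~\ref{ODI'} and the final integration conclude the proof as described.

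The one genuinely delicate point — and where I expect the effort to concentrate — is this last estimate. The crude bound $|\chi(\xi)|\le\|S\|_{L^\infty((0,\infty))}\xi$ is not affordable, since the resulting coefficient of $\int_\Omega n^2$ need not fall below the fixed coercivity constant $c_0$ coming from Gagliardo--Nirenberg; and replacing $n\log(n+1)$ by the weighted functional $\int_\Omega(n+B)\log(n+B)$ with $B$ large, so that Lemma~\ref{LTW} makes the chemotactic coefficient uniformly $\le\epsilon$, does not rescue matters, because the same enlargement degrades $c_0$ to order $1/B$ while $\epsilon B$ cannot be kept small. It is precisely the decomposition $|\chi(\xi)|\le\|S\|_{L^\infty((0,\infty))}A_\epsilon+\epsilon\,\xi$ of $\chi$ into a uniformly bounded part — contributing only a harmless additive constant after Young's inequality — and a part of arbitrarily small slope that reconciles the two constants.
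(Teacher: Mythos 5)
Your proposal is correct and follows essentially the same route as the paper: testing with (the derivative of) an entropy functional, controlling the chemotactic term indirectly through $\Delta c$ via Lemma~\ref{LM}, exploiting the decomposition of the primitive of $S$ into a bounded part plus an arbitrarily small slope $\epsilon\xi$ (the paper's \eqref{LlolL.3}), absorbing $\int_\Omega n^2$ through Gagliardo--Nirenberg with the fixed mass, and concluding with Lemma~\ref{ODI'} together with \eqref{L1-1} and \eqref{L2u-3}, then integrating to get \eqref{LlolL-2}. The only deviations --- working with $\int_\Omega n\log(n+1)$ and the test function $\log(n+1)+\tfrac{n}{n+1}$ instead of $\int_\Omega n\log n$ and $\log n+1$, and building the $\int_\Omega n^2$ coercivity directly into the differential inequality rather than retaining $\int_\Omega \frac{|\nabla n|^2}{n}$ and converting afterwards --- are cosmetic.
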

\begin{proof}
    Testing the first equation of \eqref{1} by $\log n+1$ and noting that $\nabla \cdot u=0$, we obtain
    \begin{align} \label{LlolL.1}
        \frac{d}{dt}\int_\Omega n \log n &= \int_\Omega (\log n+1)(\Delta n - \nabla \cdot (nS(n)\nabla c) -u \cdot \nabla n ) \notag \\
        &= - \int_\Omega \frac{|\nabla n|^2}{n} +\int_\Omega S(n)\nabla n \cdot \nabla c \notag \\
        &=  - \int_\Omega \frac{|\nabla n|^2}{n} - \int_\Omega \psi(n)\Delta c,
    \end{align}
    where $\psi(n) = \int_0^n S(\xi )\, d\xi$. Invoking Young's inequality with $\epsilon>0$ and using  Lemma \ref{LM}[\eqref{LM-1}], there exist $c_1>0$ and $c_2>0$ such that 
    \begin{align} \label{LlolL.2}
        - \int_\Omega \psi(n)\Delta c &\leq \epsilon \int_\Omega |\Delta c|^2 + \frac{1}{4\epsilon} \int_\Omega \psi^2(n) \notag \\
        &\leq 4\epsilon \int_\Omega n^2 +c_1 \epsilon \left \{ \int_\Omega |u|^4 \right \} \cdot \int_\Omega n \log n +c_2 \epsilon \int_\Omega |u|^4+ \frac{1}{4\epsilon} \int_\Omega \psi^2(n).
    \end{align}
    Next, we show that for any $\delta>0$, there exists $C=C(\delta)>0$ such that
    \begin{align}  \label{LlolL.3}
        |\psi(\xi)| \leq \delta \xi +C \qquad \text{for all }\xi \geq 0.
    \end{align}
    Indeed, the condition \eqref{S} asserts that there exists $K>0$ such that 
    \begin{align*}
        |S(\sigma )| \leq \delta \qquad \text{for all }\sigma \geq K.
    \end{align*}
    This leads to 
    \begin{align*}
        \int_0 ^\xi |S(\sigma)| \, d\sigma \leq \int_0 ^K |S(\sigma)| \, d\sigma \qquad \text{for all }\xi \leq K
    \end{align*}
    and 
    \begin{align*}
         \int_0 ^\xi |S(\sigma)| \, d\sigma \leq  \int_0 ^K |S(\sigma)| \, d\sigma +\int_K ^\xi |S(\sigma)| \, d\sigma \qquad \text{for all }\xi > K.
    \end{align*}
    The above estimates imply that 
    \begin{align*}
         \int_0 ^\xi |S(\sigma)| \, d\sigma \leq K  \left \|S \right \|_{L^\infty((0, \infty))} + \delta \xi \qquad \text{for all }\xi \geq 0,
    \end{align*}
    which implies \eqref{LlolL.3}. Now, employing \eqref{LlolL.3} with $\delta=2\epsilon$, there exists $c_3=c_3(\epsilon)>0$ such that 
    \begin{align}\label{LlolL.4}
        \frac{1}{4\epsilon}\int_\Omega \psi^2(n) \leq \epsilon \int_\Omega n^2+c_3.
    \end{align}
    By Gagliardo–Nirenberg interpolation inequality, there exists $c_4>0$ such that 
    \begin{align} \label{LlolL.5}
        \int_\Omega n^2 &\leq c_4 \left ( \int_\Omega n \right )\cdot \int_\Omega \frac{|\nabla n|^2}{n}+c_4\left ( \int_\Omega n \right )^2 \notag \\
        &\leq c_4m \int_\Omega \frac{|\nabla n|^2}{n} +c_4m^2,
    \end{align}
    where $m =\int_\Omega n_0$. Setting 
    \begin{align*}
        y(t)= \int_\Omega n \log n + \frac{|\Omega|}{e} \qquad \text{for all }t\in (0,T_{\rm max}),
    \end{align*}
     we see that $y $ is nonnegative due to $x\log x \geq -\frac{1}{e}$ for all $x>0$.
     Choosing $\epsilon= \frac{1}{10 c_4m}$ and collecting  \eqref{LlolL.1}, \eqref{LlolL.2}, \eqref{LlolL.4} and \eqref{LlolL.5}, one can find $c_5>0$ such that 
     \begin{align*}
         y'(t) &\leq  - \int_\Omega \frac{|\nabla n|^2}{m}+ 5\epsilon \int_\Omega n^2 + c_1 \epsilon \left \{ \int_\Omega |u|^4 \right \} \cdot \int_\Omega n \log n +c_2 \epsilon \int_\Omega |u|^4+c_3 \notag \\
         &\leq \left ( 5c_4 m \epsilon-1  \right )\int_\Omega \frac{|\nabla n|^2}{m} +c_5 \left \{ \int_\Omega |u|^4 \right \} \cdot y(t)+ c_5 \int_\Omega |u|^4+c_5 \notag \\
         &\leq -\frac{1}{2}\int_\Omega \frac{|\nabla n|^2}{n} +c_5 \left \{ \int_\Omega |u|^4 \right \} \cdot y(t)+ c_5 \int_\Omega |u|^4+c_5 \qquad\text{for all }t\in (0,T_{\rm max}).
     \end{align*}
     This entails that 
     \begin{align} \label{LlolL.6}
         y'(t)+\frac{1}{2}\int_\Omega \frac{|\nabla n|^2}{n} \leq h(t)y(t)+g(t)\qquad\text{for all }t\in (0,T_{\rm max}),
     \end{align}
     where $h(t)= c_5 \int_\Omega |u|^4$ and $g(t)=c_5 \int_\Omega |u|^4+c_5$. By invoking Lemma \ref{L2u}[\eqref{L2u-3}] and Lemma \ref{L1}[\eqref{L1-1}], there exists $c_6>0$ such that 
     \begin{align*}
         \int_t^{t+\tau} h(s) \, ds \leq c_6 \qquad \text{for all }t\in (0,T_{\rm max}-\tau)
     \end{align*}
     and 
     \begin{align*}
         \int_t^{t+\tau} g(s) \, ds \leq c_6 \qquad \text{for all }t\in (0,T_{\rm max}-\tau)
     \end{align*}
     as well as
     \begin{align*}
         \int_t^{t+\tau} y(s) \, ds \leq c_6 \qquad \text{for all }t\in (0,T_{\rm max}-\tau),
     \end{align*}
      which enables us to apply Lemma \ref{ODI'} to \eqref{LlolL.6}  to deduce 
      that 
      \begin{align*}
          \int_\Omega n(\cdot,t) \log n(\cdot,t) \leq c_7 \qquad \text{for all } t\in (0,T_{\rm max}),
      \end{align*}
      for some $c_7>0$. This implies \eqref{LlolL-1} since 
      \begin{align*}
          \int_\Omega n \log (n+1) &= \int_{ \left \{ n\leq 1 \right \} }n \log (n+1) +\int_{ \left \{ n > 1 \right \} }n \log (n+1) \notag \\
          &\leq |\Omega| \log 2  + \int_{ \left \{ n > 1 \right \} }n \log (2n) \notag \\
          &\leq  |\Omega| \log 2 +m \log 2 + \int_\Omega n \log n -  \int_{ \left \{ n\leq 1 \right \} }n \log n \notag \\
          &\leq |\Omega| \log 2 +m \log 2 +c_7+ \frac{|\Omega|}{e},
      \end{align*}
      where we used $x\log x\geq -\frac{1}{e}$ in the last inequality. Integrating \eqref{LlolL.6} over $(t,t+\tau )$, we obtain that 
      \begin{align}\label{LlolL.7}
          \int_t^{t+\tau }\int_\Omega \frac{|\nabla n|^2}{n} &\leq2 \int_t^{t+\tau }h(s)y(s)\, ds+2\int_t^{t+\tau }g(s)\, ds +2y(t) \notag \\
          &\leq c_8 \qquad \text{for all }t \in (0,T_{\rm max}-\tau),
      \end{align}
      where $c_8= 2\left (c_7+\frac{|\Omega|}{e} \right )c_6+2c_6+2c_7+\frac{2|\Omega|}{e} $. From \eqref{LlolL.5} and \eqref{LlolL.7}, it follows that 
      \begin{align*}
            \int_t^{t+\tau }\int_\Omega  n^2 \leq c_4c_8m+c_4m^2 \tau,
      \end{align*}
      which proves \eqref{LlolL-2}. The proof is now complete.

\end{proof}

Next, let us derive a uniform in time $W^{1,2}$ bound for $u$ in the following lemma. Although the proof of the lemma is similar to  \cite{YM-2016}[Lemma 3.6], we still give a detailed proof here for completeness.
\begin{lemma} \label{Gradu}
    There exists $C_1>0$ such that 
    \begin{align} \label{Gradu-1}
        \int_\Omega |\nabla u (\cdot,t)|^2 \leq C_1 \qquad \text{for all }t\in (0,T_{\rm max}).
    \end{align}
    Moreover, for any $p\geq 1$ there exists $C_2=C_2(p)>0$ such that 
    \begin{align} \label{Gradu-2}
        \int_\Omega |u(\cdot,t)|^p \leq C_2\qquad \text{for all }t\in (0,T_{\rm max}).
    \end{align}
\end{lemma}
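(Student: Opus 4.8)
The plan is to test the third equation of \eqref{1} against the Stokes operator $Au$ (equivalently, against $\Delta u$ after invoking the Helmholtz projection) in order to produce the quantity $\frac{d}{dt}\int_\Omega |\nabla u|^2$ on the left-hand side together with a good dissipation term $\int_\Omega |Au|^2$. Concretely, since $\kappa=0$ here, one obtains the identity
\begin{align*}
    \frac{1}{2}\frac{d}{dt}\int_\Omega |\nabla u|^2 + \int_\Omega |Au|^2 = \int_\Omega \mathcal{P}(n\nabla\phi)\cdot Au \leq \frac{1}{2}\int_\Omega |Au|^2 + \frac{1}{2}\|\nabla\phi\|_{L^\infty(\Omega)}^2\int_\Omega n^2,
\end{align*}
where I used that $\mathcal{P}$ is an orthogonal projection and Young's inequality. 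This leaves
\begin{align*}
    \frac{d}{dt}\int_\Omega |\nabla u|^2 + \int_\Omega |Au|^2 \leq \|\nabla\phi\|_{L^\infty(\Omega)}^2\int_\Omega n^2 \qquad \text{for all }t\in(0,T_{\rm max}).
\end{align*}

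Next I would convert this into an absorbing differential inequality suitable for Lemma~\ref{ODI}. On the one hand, by the equivalence of $\|A\cdot\|_{L^2(\Omega)}$ with $\|\cdot\|_{W^{2,2}(\Omega)}$ on $D(A)$ and the Poincaré inequality (recall $u=0$ on $\partial\Omega$), there is $c_1>0$ with $\int_\Omega |Au|^2 \geq c_1\int_\Omega |\nabla u|^2$. On the other hand, the right-hand side $\int_\Omega n^2$ is not known pointwise in time, but Lemma~\ref{LlolL}[\eqref{LlolL-2}] provides exactly the spatio-temporal bound $\int_t^{t+\tau}\int_\Omega n^2 \leq C$ for all $t\in(0,T_{\rm max}-\tau)$ with $\tau=\min\{1,T_{\rm max}/2\}$. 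Thus, setting $y(t)=\int_\Omega|\nabla u|^2$ and $h(t)=\|\nabla\phi\|_{L^\infty(\Omega)}^2\int_\Omega n^2$, the hypotheses of Lemma~\ref{ODI} are met with $a=c_1$ and $b=C\|\nabla\phi\|_{L^\infty(\Omega)}^2/\tau$, which immediately yields the uniform bound \eqref{Gradu-1}.

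Finally, for \eqref{Gradu-2} I would appeal to Sobolev embedding in dimension $N=2$: the bound \eqref{Gradu-1} together with the $L^2$ bound \eqref{L2u-1} from Lemma~\ref{L2u} gives a uniform $W^{1,2}(\Omega)$ bound for $u(\cdot,t)$, and since $W^{1,2}(\Omega)\hookrightarrow L^p(\Omega)$ continuously for every $p\in[1,\infty)$ when $N=2$, the estimate \eqref{Gradu-2} follows for arbitrary $p\geq 1$ with $C_2$ depending on $p$ through the embedding constant.

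The main obstacle in this argument is a technical one concerning the boundary term when testing against $Au$: for the Stokes operator with no-slip boundary conditions one must justify that $\int_\Omega (\Delta u)\cdot(Au) = -\int_\Omega |Au|^2$ (or rather that the nonlinear/pressure contributions vanish under $\mathcal{P}$), which is standard but requires the regularity $u(\cdot,t)\in D(A)$ for $t>0$ guaranteed by Lemma~\ref{Local}; with $\kappa=0$ there is no convective term $(u\cdot\nabla)u$ to estimate, so this is the only subtlety and it is handled exactly as in \cite{YM-2016}[Lemma~3.6]. Everything else is a direct combination of the already-established estimates.
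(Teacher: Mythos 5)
There is a genuine gap, and it is exactly at the point you describe as unproblematic: you set $\kappa=0$ and discard the convective term. But this lemma belongs to Section~\ref{S3}, which treats the two-dimensional Keller--Segel--\emph{Navier}--Stokes case: Lemma~\ref{Local} fixes $\kappa=1$ when $N=2$ (the ``$\kappa=0$'' in the statement of Theorem~\ref{thm1} is evidently a misprint; the abstract, the section heading, and all subsequent proofs use $\kappa=1$). For the solution $(n,c,u,P)$ to which the lemma refers, testing the projected momentum equation with $Au$ therefore produces the extra term $-\int_\Omega \mathcal{P}\bigl((u\cdot\nabla)u\bigr)\cdot Au$, which --- unlike in the $L^2$ energy estimate, where $\int_\Omega (u\cdot\nabla)u\cdot u=0$ --- does not vanish, so your energy identity is false for the system at hand and the main difficulty of the lemma is skipped. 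Handling this term is the substance of the argument: one estimates $\int_\Omega |(u\cdot\nabla)u|^2 \le \|u\|_{L^\infty(\Omega)}^2\int_\Omega|\nabla u|^2 \le c\,\|u\|_{L^2(\Omega)}\|Au\|_{L^2(\Omega)}\int_\Omega|\nabla u|^2$ (two-dimensional Gagliardo--Nirenberg, together with the uniform $L^2$ bound \eqref{L2u-1}), absorbs the factor $\|Au\|_{L^2(\Omega)}^2$ by Young's inequality, and is left with a contribution $c\bigl(\int_\Omega|\nabla u|^2\bigr)^2$. This makes the resulting differential inequality \emph{superlinear} in $y(t)=\int_\Omega|\nabla u|^2$, so your plan of a linear absorption via Poincar\'e and Lemma~\ref{ODI} cannot work; instead one writes $y'\le h_1(t)\,y+h_2(t)$ with $h_1=c\int_\Omega|\nabla u|^2$ and $h_2=c\int_\Omega n^2$, notes that $\int_t^{t+\tau}h_1\le C$ by \eqref{L2u-2}, $\int_t^{t+\tau}h_2\le C$ by \eqref{LlolL-2}, and $\int_t^{t+\tau}y\le C$ again by \eqref{L2u-2}, and then applies Lemma~\ref{ODI'} to conclude \eqref{Gradu-1}.

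The remaining ingredients of your proposal are fine and coincide with the intended argument: the buoyancy term is estimated exactly as you do, the spatio-temporal bound \eqref{LlolL-2} is the correct source of control for $\int_\Omega n^2$, and \eqref{Gradu-2} does follow from \eqref{Gradu-1}, \eqref{L2u-1} and the two-dimensional embedding $W^{1,2}(\Omega)\hookrightarrow L^p(\Omega)$ for every finite $p$. Had the system genuinely carried $\kappa=0$ in two dimensions, your linear argument with Lemma~\ref{ODI} would be a correct (and simpler) proof; as the lemma stands, the missing treatment of $(u\cdot\nabla)u$ and the consequent need for Lemma~\ref{ODI'} is a real gap, not a technicality.
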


\begin{proof}
  Applying the Helmholtz projector $\mathcal{P}$ to the third equation in \eqref{1} and then taking the inner product of the resulting expression with $Au$, we obtain an identity from which, by invoking Young's inequality together with the orthogonality property of $\mathcal{P}$, it follows that 
    \begin{align} \label{Lpu.2}
        \frac{1}{2} \frac{d}{dt}\int_\Omega |A^\frac{1}{2}u|^2 &= -\int_\Omega |Au|^2 - \int_\Omega  \mathcal{P}((u\cdot \nabla)u) \cdot Au + \int_\Omega  \mathcal{P}(n \nabla \phi) \cdot Au  \notag \\
        &\leq -\frac{3}{4}\int_\Omega |Au|^2+ \int_\Omega |(u \cdot \nabla )u|^2 + \left \| \nabla \phi \right \|_{L^\infty(\Omega)} \int_\Omega n^2. 
    \end{align}
     Using Lemma \ref{L2u}[\eqref{L2u-1}], Gagliardo–Nirenberg interpolation inequality and Young's inequality, there exists $c_1>0$ and $c_2>0$ such that
    \begin{align} \label{Lpu.3}
         \int_\Omega |(u \cdot \nabla )u|^2 &\leq \left \| u \right \|^2_{L^\infty(\Omega)} \int_\Omega |\nabla u|^2 \notag \\
         &\leq c_1 \left ( \int_\Omega |u|^2 \right )^{\frac{1}{2}} \left ( \int_\Omega |Au|^2 \right )^\frac{1}{2}\int_\Omega |\nabla u|^2  \notag \\
         &\leq \frac{1}{4} \int_\Omega |Au |^2 + c_2 \left ( \int_\Omega |\nabla u|^2 \right )^2 \qquad \text{for all }t \in (0,T_{\rm max}). 
    \end{align}
    Setting 
    \begin{align*}
        h_1(t) = c_2\int_\Omega |\nabla u(\cdot,t)|^2,
    \end{align*}
    and
    \begin{align*}
        h_2(t)= \left \| \nabla \phi \right \|_{L^\infty(\Omega)} \int_\Omega n^2(\cdot,t). 
    \end{align*}
    then from Lemma \ref{L2u}  [\eqref{L2u-2}] and  Lemma \ref{LlolL}[\eqref{LlolL-2}], there exists $c_3>0$ such that
    \begin{align}\label{Lpu.4}
        \int_t ^{t+1} h_1(s)\, ds  \leq c_3 \qquad \text{for all }t \in (0,T_{\rm max})
    \end{align}
    and 
    \begin{align}\label{Lpu.5}
        \int_t ^{t+1} h_2(s)\, ds \leq c_3 \qquad \text{for all }t \in (0,T_{\rm max}).
    \end{align} 
    Collecting \eqref{Lpu.2} and \eqref{Lpu.3}, and noting that $\int_\Omega |A^\frac{1}{2}u | =\int_\Omega |\nabla u|^2$ gives us that 
    \begin{align*}
        \frac{d}{dt}\int_\Omega |\nabla u(\cdot,t)|^2 \leq h_1(t)\int_\Omega |\nabla u(\cdot,t)|^2 +h_2(t) \qquad \text{for all }t \in (0,T_{\rm max}).
    \end{align*}
    Using the estimates \eqref{Lpu.4} and \eqref{Lpu.5} and applying Lemma \ref{ODI'}, we deduce \eqref{Gradu-1}. By applying Hölder's inequality and Lemma \ref{L2u}[\eqref{L2u-1}], \eqref{Gradu-2} is followed for any $p \in (1,2)$. In case $p\geq 2$, we apply Sobolev's inequality together with Lemma \ref{L2u}[\eqref{L2u-1}] and \eqref{Gradu-1} to deduce that 
    \begin{align} \label{Lpu.1}
        \int_\Omega |u|^p \leq  c_4 \left ( \int_\Omega |\nabla u|^2 \right )^{\frac{p}{2}} +c_4 \left ( \int_\Omega |u|^2 \right )^{\frac{p}{2}} \leq c_5 \qquad \text{for all }t\in (0,T_{\rm max}), 
    \end{align}
    where $c_4>0$ and $c_5>0$.
\end{proof}

We are now able to present an $L^p$ boundedness result for $n$ as follows.
\begin{lemma} \label{Lpn}
    For any $p>1$, there exists $C=C(p)>0$ such that 
    \begin{align*}
        \int_\Omega n^p(\cdot,t) \leq C \qquad \text{for all }t\in (0,T_{\rm max}).
    \end{align*}
\end{lemma}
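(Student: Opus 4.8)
The strategy is a standard $L^p$-energy estimate for $n$ driven by the $L^p$ bounds for $u$ from Lemma~\ref{Gradu} and the $L\log L$ and $L^2$ control from Lemma~\ref{LlolL}. I would fix $p>1$ and test the first equation of \eqref{1} against $n^{p-1}$. Using $\nabla\cdot u=0$ to kill the convection term, this gives
\[
\frac{1}{p}\frac{d}{dt}\int_\Omega n^p = -(p-1)\int_\Omega n^{p-2}|\nabla n|^2 + (p-1)\int_\Omega n^{p-1}S(n)\,\nabla n\cdot\nabla c.
\]
Rewriting the first term via $\int_\Omega n^{p-2}|\nabla n|^2 = \tfrac{4}{p^2}\int_\Omega |\nabla n^{p/2}|^2$, the task reduces to absorbing the cross term. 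Here I want to integrate by parts once more, writing $n^{p-1}S(n)\nabla n = \nabla\Psi(n)$ with $\Psi(\xi)=\int_0^\xi \sigma^{p-1}S(\sigma)\,d\sigma$, so the cross term becomes $-(p-1)\int_\Omega \Psi(n)\,\Delta c$. By the bound \eqref{S}, $|S(\xi)|\le K_S(\xi+1)^{-\alpha}$ (and in the $N=2$ case $S$ is just bounded and vanishing at infinity, which is even more favorable), one gets $|\Psi(\xi)|\le C(\xi+1)^{p-\alpha}$ when $p>\alpha$ — in two dimensions $\alpha$ can be taken as large as we like on compact sets but here only the tail matters, so effectively $|\Psi(\xi)|\lesssim \varepsilon\xi^{p} + C_\varepsilon$ for $p\ge 1$ by the same argument as \eqref{LlolL.3}. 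Then Young's inequality splits $-(p-1)\int_\Omega\Psi(n)\Delta c$ into a small multiple of $\int_\Omega|\Delta c|^2$ plus $\int_\Omega\Psi^2(n)$.

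The next step is to control $\int_\Omega|\Delta c|^2$ in terms of $n$ and $u$, exactly as in Lemma~\ref{LM}: testing the second equation against $\Delta c$ yields $\int_\Omega|\Delta c|^2 \le \tfrac12\int_\Omega|\Delta c|^2 + \int_\Omega|u|^2|\nabla c|^2 + \int_\Omega n^2$, and then $\int_\Omega|u|^2|\nabla c|^2$ is handled by Hölder and the Gagliardo–Nirenberg/elliptic-regularity inequality $\|\nabla c\|_{L^4}^2\le c_1\|\Delta c\|_{L^2}\|\nabla c\|_{L^2}$ together with the now-available higher integrability of $u$ (Lemma~\ref{Gradu}, which gives $u\in L^q(\Omega)$ for every $q$) and the $L^p$ bounds for $c$ (Lemma~\ref{Lpc}). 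After absorbing the $\|\Delta c\|_{L^2}^2$ terms, one is left with $\int_\Omega|\Delta c|^2 \le 2\int_\Omega n^2 + (\text{bounded})$. Feeding this back, the differential inequality takes the shape
\[
\frac{d}{dt}\int_\Omega n^p + \frac{2(p-1)}{p}\int_\Omega |\nabla n^{p/2}|^2 \le \delta\int_\Omega n^2 + C_\delta\int_\Omega n^p + C_\delta,
\]
and one must dominate $\int_\Omega n^2$ and $\int_\Omega n^p$ by the good gradient term plus lower-order quantities.

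The main obstacle is the $\int_\Omega n^p$ (and, when $p$ is close to $1$, the $\int_\Omega n^2$) term on the right: to close the estimate one needs $\int_\Omega n^p \le \varepsilon\int_\Omega|\nabla n^{p/2}|^2 + C_\varepsilon$, which in two dimensions follows from the Gagliardo–Nirenberg inequality $\|n^{p/2}\|_{L^2}^2 \le C\|\nabla n^{p/2}\|_{L^2}^{2\theta}\|n^{p/2}\|_{L^{1/p \cdot 2}}^{2(1-\theta)}+\dots$ combined with the uniform mass bound \eqref{Local.2} and, for the borderline low-$p$ case, the uniform $L\log L$ bound \eqref{LlolL-1} (which already gives $n$ bounded in $L^1\log L^1$, hence enough compactness in GN). Concretely I would first prove the claim for $p\in(1,2]$ using $\int_t^{t+\tau}\int_\Omega n^2 \le C$ from \eqref{LlolL-2} to control the forcing in an ODE-comparison argument (Lemma~\ref{ODI} or Lemma~\ref{ODI'}), obtaining a uniform $L^2$ bound for $n$; then bootstrap: with $n$ bounded in $L^2$ uniformly in time, the right-hand side forcing $\int_\Omega n^2$ is genuinely bounded, and one repeats the computation for larger $p$, at each stage using the previous $L^{p'}$ bound to absorb the lower-order terms via Gagliardo–Nirenberg, so that after finitely many steps every $p>1$ is covered. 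The only real care needed is the $\Psi$-estimate and making the coefficient of $\int_\Omega n^2$ (equivalently, $\varepsilon$ in Young's inequality) small enough to be swallowed by the diffusion term, which is exactly the role played by choosing $\delta$ small in the display above.
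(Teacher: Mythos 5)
Your argument breaks down at the Young step, and the displayed differential inequality hides the problem. With only $\lim_{\xi\to\infty}S(\xi)=0$ at hand, the best bound on $\Psi(\xi)=\int_0^\xi\sigma^{p-1}S(\sigma)\,d\sigma$ is $|\Psi(\xi)|\le\delta\xi^{p}+C_\delta$, so splitting $-(p-1)\int_\Omega\Psi(n)\Delta c\le\epsilon\int_\Omega|\Delta c|^2+\tfrac{C}{\epsilon}\int_\Omega\Psi^2(n)$ produces a term of size $\tfrac{C\delta^2}{\epsilon}\int_\Omega n^{2p}$, \emph{not} $C_\delta\int_\Omega n^{p}$ as your display asserts. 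This $\int_\Omega n^{2p}$ cannot be absorbed into the dissipation with the tools you invoke: in two dimensions Gagliardo--Nirenberg gives
\begin{align*}
\int_\Omega n^{2p}\le C\Bigl(\int_\Omega|\nabla n^{p/2}|^2\Bigr)\cdot\Bigl(\int_\Omega n^{p}\Bigr)+C\Bigl(\int_\Omega n^{p}\Bigr)^2,
\end{align*}
so the absorption requires precisely the uniform bound on $\int_\Omega n^p$ you are trying to prove; the mass bound and the $L\log L$ bound do not suffice, and the argument is circular. (This is exactly why the ``Young against $\Delta c$'' device is used in the paper only at the $L\log L$ level, Lemma~\ref{LlolL}, where $\psi(n)\lesssim\delta n+C$ and the resulting $\int_\Omega n^2$ is subcritical relative to $\int_\Omega\frac{|\nabla n|^2}{n}$ together with the mass.) Your proposed repair for $p\in(1,2]$ also does not go through as described: with the correct right-hand side the coefficient multiplying $y=\int_\Omega n^2$ is of order $\delta^2\int_\Omega|\nabla n|^2$, and no spatio-temporal bound on $\int_\Omega|\nabla n|^2$ is available (only on $\int_\Omega\frac{|\nabla n|^2}{n}$ and $\int_\Omega\frac{|\nabla n|^2}{(n+1)^2}$), so Lemma~\ref{ODI'} cannot be applied; one would need a genuine smallness-plus-continuation argument exploiting that $\delta$ may be chosen small, which you do not supply.

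For comparison, the paper never generates a quadratic expression in $\psi(n)$: instead of Cauchy--Schwarz against $\Delta c$, it substitutes the elliptic equation pointwise, $\Delta c=u\cdot\nabla c+c-n$, so the cross term becomes $\int_\Omega n\psi(n)-\int_\Omega c\,\psi(n)-\int_\Omega\psi(n)\,u\cdot\nabla c$. The first piece is $\le\delta\int_\Omega n^{p+1}+C$, absorbable by Gagliardo--Nirenberg and the mass bound, while the convective piece is integrated by parts once more to give $(p-1)\int_\Omega n^{p-1}S(n)\,c\,u\cdot\nabla n$, and after Young one is left with $\int_\Omega n^pS^2(n)c^2|u|^2$, which is dominated by a small multiple of $\int_\Omega n^{p+1}$ plus bounded quantities thanks to the uniform $L^q$ bounds for $u$ (Lemma~\ref{Gradu}) and $c$ (Lemma~\ref{Lpc}). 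In other words, the paper's route only ever raises the power of $n$ by one, which is subcritical; your route raises it to $2p$, which is not, and that is the gap you would have to close (e.g.\ by first proving a uniform $L^2$ bound via a smallness/continuation argument and only then bootstrapping) before the rest of your plan can work.
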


\begin{proof}
    Multiplying the first equation of \eqref{1}, noting that $\nabla \cdot u =0$, and using the second equation of \eqref{1}, we obtain 
    \begin{align} \label{Lpn.1}
        \frac{1}{p}\frac{d}{dt} \int_\Omega n^p &= \int_\Omega n^{p-1} \left ( \Delta n - \nabla \cdot (n S(n)\nabla c) - u \cdot \nabla n \right ) \notag \\
        &= -(p-1) \int_\Omega n^{p-2}|\nabla n|^2 + (p-1) \int_\Omega n^{p-1} S(n ) \nabla n \cdot \nabla c \notag \\
        &=  -(p-1) \int_\Omega n^{p-2}|\nabla n|^2 +  \int_\Omega  \nabla \psi (n) \cdot \nabla c \notag \\
         &=  - (p-1)\int_\Omega n^{p-2}|\nabla n|^2 -  \int_\Omega  \psi (n) \Delta c \notag \\
         &= - (p-1)\int_\Omega n^{p-2}|\nabla n|^2 +  \int_\Omega  n \psi (n) - \int_\Omega u \cdot \nabla c \, \psi(n),
    \end{align}
    where 
    \begin{align*}
        \psi(\xi ) = (p-1)\int_0 ^\xi \sigma^{p-1}S(\sigma)\, d\sigma \qquad \text{for all }\xi \geq 0.
    \end{align*}
    Integrating by parts, using the divergence free condition of $u$, and invoking Young's inequality  yields
    \begin{align} \label{Lpn.2}
        - \int_\Omega u \cdot \nabla c \, \psi(n) &= (p-1) \int_\Omega n^{p-1}S(n) c u \cdot \nabla n \notag\\
        &\leq \frac{p-1}{2} \int_\Omega n^{p-2}|\nabla n|^2 + \frac{p-1}{2} \int_\Omega n^p S^2(n) c^2 |u|^2.
    \end{align}
   By Gagliardo–Nirenberg interpolation inequality and Lemma \ref{Local}[\eqref{Local.2}], there exist $c_1>0$ and $c_2>0$ such that 
   \begin{align} \label{Lpn.3}
       \int_\Omega n^{p+1} &\leq c_1 \left ( \int_\Omega n \right ) \cdot \int_\Omega n^{p-2}|\nabla n|^2 +c_1 \left ( \int_\Omega n \right ) ^{p+1} \notag \\
       &\leq c_2 \int_\Omega n^{p-2}|\nabla n|^2 +c_2.
   \end{align}
   Since $S$ is bounded due to \eqref{S}, we apply Young's inequality together with \eqref{Local.2} and Lemma \ref{Gradu}[\ref{Gradu-2}] and Lemma \ref{Lpc} to deduce that there exist positive constants $c_3, c_4,$ and $c_5$ fulfilling 
   \begin{align} \label{Lpn.4}
       \frac{p-1}{2} \int_\Omega n^p S^2(n) c^2 |u|^2 &\leq c_3 \int_\Omega n^p c^2 |u|^2 \notag \\
       &\leq \frac{p-1}{4c_2}\int_\Omega n^{p+1}+c_4 \int_\Omega c^{2p+2}|u|^{2p+2} \notag \\
       &\leq \frac{p-1}{4}\int_\Omega n^{p-2}|\nabla n|^2 + \frac{p-1}{4} +c_4 \int_\Omega c^{4p+4} +c_4 \int_\Omega |u|^{4p+4} \notag \\
       &\leq \frac{p-1}{4}\int_\Omega n^{p-2}|\nabla n|^2 +c_5.
   \end{align}
   Next, we show that for any $\delta>0$, there exists $C=C(\delta)>0$ such that
   \begin{align} \label{Lpn.5}
       |\psi(\xi)| \leq \delta \xi + C \qquad \text{for all }\xi \geq 0.
   \end{align}
   Indeed, the condition \eqref{S} implies that $S$ is bounded and satisfies that for any $\delta>0$, there exists $K=K(\delta)>0$ such that
   \begin{align*}
       |S(\xi)| \leq \delta \qquad \text{for all }\xi \geq K.
   \end{align*}
   This further leads to 
    \begin{align*}
        (p-1)\int_0^\xi \sigma^{p-1}|S(\sigma)|\, d\sigma \leq \frac{p-1}{p} \left \| S \right \|_{L^\infty((0,\infty))} K^p\qquad \text{for all }\xi < K.
    \end{align*}
    and
     \begin{align*}
        (p-1)\int_0^\xi \sigma^{p-1}|S(\sigma)|\, d\sigma &\leq (p-1) \int_0^K \sigma^{p-1}|S(\sigma)|\, d\sigma +(p-1)\int_K ^\xi \sigma^{p-1}|S(\sigma)|\, d\sigma \notag \\
        &\leq \frac{p-1}{p} \left \| S \right \|_{L^\infty((0,\infty))} K^p +\frac{p-1}{p} \delta \xi^p  \qquad \text{for all }\xi \geq K.
    \end{align*}
    Therefore, we have
    \begin{align*}
         (p-1)\int_0^\xi \sigma^{p-1}|S(\sigma)|\, d\sigma \leq \delta \xi^p +\left \| S \right \|_{L^\infty((0,\infty))} K^p  \qquad \text{for all }\xi \geq 0,
    \end{align*}
    which implies \eqref{Lpn.5}. Combining \eqref{Lpn.5}, \eqref{Lpn.3} and \eqref{Local.2} and employing Young's inequality, one can find $c_6>0$ and $c_7>0$ such that 
    \begin{align} \label{Lpn.6}
       \frac{1}{p}\int_\Omega n^p+ \int_\Omega n \psi(n) &\leq \frac{p-1}{4c_2} \int_\Omega n^{p+1}+c_6 \int_\Omega n \notag \\
        &\leq \frac{p-1}{4} \int_\Omega n^{p-2}|\nabla n|^2 + c_7.
    \end{align}
    Collecting \eqref{Lpn.1}, \eqref{Lpn.2}, \eqref{Lpn.4}, and \eqref{Lpn.6}, we arrive at
    \begin{align*}
        \frac{1}{p} \frac{d}{dt} \int_\Omega n^p+\frac{1}{p}\int_\Omega n^p \leq c_8 \qquad \text{for all }t\in (0,T_{\rm max}),
    \end{align*}
    where $c_8=c_5+c_7$. This, together with Gronwall's inequality implies that 
    \begin{align*}
        \int_\Omega n^p(\cdot,t) \leq \max \left \{ \int_\Omega n_0^p, pc_8 \right \}\qquad \text{for all }t\in (0,T_{\rm max}),
    \end{align*}
    which completes the proof.
\end{proof}

\subsection{Proof of the main result for $N=2$}  \label{S3-3}

As a consequence of $L^p$ boundedness of $n$ established in Lemma \ref{Lpn}, we can now derive an $L^\infty$ bound for $u$ as the following lemma.

\begin{lemma}\label{uLinf}
    For any $\beta \in \left ( \frac{1}{2}, 1 \right )$, there exists $C>0$ such that 
    \begin{align}\label{uLinf-1}
        \left \| A^\beta u (\cdot,t) \right \|_{L^2(\Omega)} \leq C \quad \text{for all }t \in (0,T_{\rm max})
    \end{align}
    and 
    \begin{align}\label{uLinf-2}
         \left \|u (\cdot,t) \right \|_{L^\infty(\Omega)} \leq C \quad \text{for all }t \in (0,T_{\rm max}).
    \end{align}
\end{lemma}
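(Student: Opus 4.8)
The plan is to derive both bounds from the variation-of-constants representation of $u$ combined with the a priori estimates already available, namely $\sup_{t}\|\nabla u(\cdot,t)\|_{L^2(\Omega)}<\infty$ and $\sup_{t}\|u(\cdot,t)\|_{L^q(\Omega)}<\infty$ for every $q\ge1$ from Lemma~\ref{Gradu}, together with $\sup_{t}\|n(\cdot,t)\|_{L^2(\Omega)}<\infty$ from Lemma~\ref{Lpn}. Since $(n,c,u,P)$ solves \eqref{1} classically, we may write
\[
u(\cdot,t)=e^{-tA}u_0-\int_0^t e^{-(t-s)A}\mathcal{P}\big[(u\cdot\nabla)u\big](\cdot,s)\,ds+\int_0^t e^{-(t-s)A}\mathcal{P}\big[n\nabla\phi\big](\cdot,s)\,ds,
\]
apply $A^\beta$, and estimate the three resulting terms in $L^2(\Omega)$ by means of the standard Stokes-semigroup smoothing estimates on a bounded smooth domain $\Omega\subset\mathbb{R}^2$: there exist $\lambda>0$ and $C>0$ such that, for all $\sigma>0$,
\[
\|A^\beta e^{-\sigma A}v\|_{L^2(\Omega)}\le C\sigma^{-\beta}e^{-\lambda\sigma}\|v\|_{L^2(\Omega)},\qquad
\|A^\beta e^{-\sigma A}\mathcal{P} g\|_{L^2(\Omega)}\le C\sigma^{-\beta-\left(\frac1q-\frac12\right)}e^{-\lambda\sigma}\|g\|_{L^q(\Omega)}\quad(1<q\le2).
\]
Once the resulting estimate \eqref{uLinf-1} is in hand, \eqref{uLinf-2} follows at once from the continuous embedding $D(A^\beta)\hookrightarrow L^\infty(\Omega)$, which holds in $\mathbb{R}^2$ exactly because $\beta>\tfrac12$.

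The first and third terms are routine. For the initial-data term, $\|A^\beta e^{-tA}u_0\|_{L^2(\Omega)}=\|e^{-tA}A^\beta u_0\|_{L^2(\Omega)}\le\|A^\beta u_0\|_{L^2(\Omega)}<\infty$ by \eqref{initial}. For the buoyancy term, $\|n(\cdot,s)\nabla\phi\|_{L^2(\Omega)}\le\|\nabla\phi\|_{L^\infty(\Omega)}\|n(\cdot,s)\|_{L^2(\Omega)}\le C$ by \eqref{phi} and Lemma~\ref{Lpn}, so that, using the first smoothing estimate and $\beta<1$,
\[
\int_0^t\big\|A^\beta e^{-(t-s)A}\mathcal{P}\big[n\nabla\phi\big](\cdot,s)\big\|_{L^2(\Omega)}\,ds\le C\int_0^\infty\sigma^{-\beta}e^{-\lambda\sigma}\,d\sigma<\infty.
\]

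The only delicate point — and the step I expect to be the main obstacle — is the convective term $(u\cdot\nabla)u$ (present in the Navier--Stokes case $\kappa=1$; when $\kappa=0$ it is absent and the argument simplifies). The key is \emph{not} to rewrite it as $\nabla\cdot(u\otimes u)$: that would cost an extra half-derivative on the semigroup and produce the singularity $\sigma^{-\beta-\frac12}$, which fails to be integrable once $\beta\ge\tfrac12$. Instead I would estimate $(u\cdot\nabla)u$ directly in $L^r(\Omega)$ for some $r<2$ close to $2$, trading the gradient on $u$ against the higher integrability furnished by Lemma~\ref{Gradu}. Concretely, fix $r\in\big(\tfrac{2}{3-2\beta},\,2\big)$, which is a nonempty subinterval of $(1,2)$ because $\tfrac12<\beta<1$; then Hölder's inequality together with \eqref{Gradu-1} and \eqref{Gradu-2} gives
\[
\big\|(u\cdot\nabla)u(\cdot,s)\big\|_{L^r(\Omega)}\le\|u(\cdot,s)\|_{L^{2r/(2-r)}(\Omega)}\,\|\nabla u(\cdot,s)\|_{L^2(\Omega)}\le C\qquad\text{for all }s\in(0,T_{\rm max}),
\]
while the choice of $r$ ensures $\beta+\big(\tfrac1r-\tfrac12\big)<1$, so that by the second smoothing estimate
\[
\int_0^t\big\|A^\beta e^{-(t-s)A}\mathcal{P}\big[(u\cdot\nabla)u\big](\cdot,s)\big\|_{L^2(\Omega)}\,ds\le C\int_0^\infty\sigma^{-\beta-\frac1r+\frac12}e^{-\lambda\sigma}\,d\sigma<\infty.
\]
Adding the three contributions yields \eqref{uLinf-1}, and then \eqref{uLinf-2} via the embedding recalled above. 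Everything apart from this convective estimate is an immediate consequence of the bounds established earlier.
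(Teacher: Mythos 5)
Your proposal is correct and follows essentially the same route as the paper: the same Duhamel representation with Stokes-semigroup smoothing, the same $L^2$ bound on $n\nabla\phi$ via Lemma~\ref{Lpn}, the same treatment of the convective term by estimating $(u\cdot\nabla)u$ in $L^r(\Omega)$ with $\tfrac1r<\tfrac32-\beta$ through H\"older's inequality and Lemma~\ref{Gradu}, and the same embedding $D(A^\beta)\hookrightarrow L^\infty(\Omega)$ for $\beta>\tfrac12$. No gaps to report.
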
    
\begin{proof}
    Noting that $\beta <1$, we can choose $r\in (1,2)$ such that $\frac{1}{r}< \frac{3}{2}-\beta$, and apply known smoothing properties of Stokes semigroup $(e^{-tA})_{t\geq 0}$ (see \cite{Giga1986}[p.201] and \cite{Winkler+2015}[Lemma 3.1] ) and continuity features of $\mathcal{P}$ to find $c_1>0$ and $\lambda>0$ such that 
   \begin{align} \label{uLinf.1}
\left\| A^\beta u(\cdot,t) \right\|_{L^2(\Omega)}
&= \left\| A^\beta e^{-tA}u_0
+ \int_0^t A^\beta e^{-(t-s)A} \mathcal{P}\big[n(\cdot,s)\nabla \phi\big] \, ds \right. \notag \\
&\qquad\left.
- \int_0^t A^\beta e^{-(t-s)A} \mathcal{P}\big[(u(\cdot,s)\cdot\nabla)u(\cdot,s)\big] \, ds
\right\|_{L^2(\Omega)} \notag \\
&\leq \left \| A^\beta u_0 \right \|_{L^2(\Omega)} +c_1\int_0^t (t-s)^{-\beta } e^{-\lambda(t-s)}\left \| n(\cdot,s) \right \|_{L^2(\Omega)} \notag \\
&\quad+c_1\int_0^t (t-s)^{-\beta - \frac{1}{r}+\frac{1}{2}} e^{-\lambda(t-s)} \left \| (u(\cdot,s)\cdot\nabla)u(\cdot,s) \right \|_{L^r(\Omega)} \qquad \text{for all }t\in (0,T_{\rm max}).
\end{align}
Applying Lemma \ref{Gradu} and Hölder's inequality, there exists $c_2>0$ such that
\begin{align} \label{uLinf.2}
    \left \| (u\cdot\nabla)u \right \|_{L^r(\Omega)} \leq \left \| u \right \|_{L^{\frac{2r }{2-r}}(\Omega)} \left \| \nabla u \right \|_{L^2(\Omega)} \leq c_2 \qquad \text{for all }t\in (0,T_{\rm max}).
\end{align}
Combining \eqref{uLinf.1} and \eqref{uLinf.2}, and apply Lemma \ref{Lpn}, there exists $c_3>0$ such that
\begin{align}\label{uLinf.3}
    \left\| A^\beta u(\cdot,t) \right\|_{L^2(\Omega)} &\leq \left \| A^\beta u_0 \right \|_{L^2(\Omega)} +c_1 \sup_{t\in (0,T_{\rm max})} \left \| n(\cdot,t) \right \|_{L^2(\Omega )} \int_0^\infty z^{-\beta}e^{-\lambda z}\, dz \notag \\
    &\quad+c_1c_2 \int_0 ^\infty z^{-\beta -\frac{1}{r}+\frac{1}{2}}e^{-\lambda z}\, dz \notag \\
    &\leq  c_3 \qquad \text{for all }t\in (0,T_{\rm max}). 
\end{align}
As a consequence of the continuity embedding $D(A^\beta) \to L^\infty (\Omega; \mathbb{R}^2)$ and \eqref{uLinf.3}, there exists $c_4>0$ satisfying 
\begin{align*}
     \left\|  u(\cdot,t) \right\|_{L^\infty(\Omega)} \leq c_4  \left\| A^\beta u(\cdot,t) \right\|_{L^2(\Omega)}\leq c_4c_3\qquad \text{for all }t\in (0,T_{\rm max}). 
\end{align*}
The proof is now complete.

\end{proof}

Thanks to the $L^p$ boundedness of $n$, the $L^\infty$ boundedness of $u$, and the regularity theory for elliptic equations, we can now establish the uniform boundedness of $c$ in $W^{1,\infty}(\Omega)$.

\begin{lemma} \label{cbdd'}
    There exists $C>0$ such that 
    \begin{align*}
        \left \| c(\cdot,t) \right \|_{W^{1, \infty}(\Omega)} \leq C \qquad \text{for all }t\in (0,T_{\rm max}).
    \end{align*}
\end{lemma}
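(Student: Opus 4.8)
The plan is to regard the second equation of \eqref{1} as an elliptic problem for $c$ at each frozen time $t$, namely
\[
-\Delta c + c = n - u\cdot\nabla c \quad\text{in }\Omega,\qquad \frac{\partial c}{\partial\nu}=0 \text{ on }\partial\Omega,
\]
and to run a two-step bootstrap. The key structural point is that every $L^p$ bound at our disposal is uniform in $t\in(0,T_{\rm max})$, so the elliptic estimates produced along the way are uniform in time as well, and the operator $-\Delta+1$ with homogeneous Neumann data is boundedly invertible on $L^q(\Omega)$ into $W^{2,q}(\Omega)$.

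\textbf{Step 1 (from $L^p$ to $W^{1,p}$).} Using $\nabla\cdot u = 0$ I rewrite the right-hand side in divergence form as $n - \nabla\cdot(uc)$. By Lemma~\ref{Lpc}, $c$ is bounded in $L^q(\Omega)$ for every $q>1$, and by Lemma~\ref{uLinf}, $u$ is bounded in $L^\infty(\Omega)$; hence $uc$ is bounded in $L^q(\Omega;\mathbb{R}^2)$, while by Lemma~\ref{Lpn}, $n$ is bounded in $L^q(\Omega)$, all uniformly in $t$. Since $(-\Delta+1)^{-1}$ maps $L^q(\Omega)$ into $W^{2,q}(\Omega)$ and $(-\Delta+1)^{-1}\nabla\cdot$ maps $L^q(\Omega;\mathbb{R}^2)$ into $W^{1,q}(\Omega)$ boundedly (standard elliptic regularity), there is $C(q)>0$ with
\[
\|c(\cdot,t)\|_{W^{1,q}(\Omega)}\le C(q)\big(\|n(\cdot,t)\|_{L^q(\Omega)}+\|u(\cdot,t)\|_{L^\infty(\Omega)}\|c(\cdot,t)\|_{L^q(\Omega)}\big)\qquad\text{for all }t\in(0,T_{\rm max}),
\]
so that $c$ is bounded in $W^{1,q}(\Omega)$ for every $q>1$, uniformly in time.

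\textbf{Step 2 (from $W^{1,p}$ to $W^{2,p}$ and $W^{1,\infty}$).} Now $u\cdot\nabla c$ makes pointwise sense, and combining Step 1 with Lemma~\ref{uLinf} gives $\|u\cdot\nabla c\|_{L^q(\Omega)}\le\|u\|_{L^\infty(\Omega)}\|\nabla c\|_{L^q(\Omega)}$ bounded uniformly in $t$ for every $q$. Hence the full source $n - u\cdot\nabla c$ is bounded in $L^q(\Omega)$, and $W^{2,q}$ elliptic regularity for $-\Delta+1$ with Neumann data yields a uniform-in-time bound for $\|c(\cdot,t)\|_{W^{2,q}(\Omega)}$ for every $q>1$. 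Fixing any $q>2=N$, the Sobolev embedding $W^{2,q}(\Omega)\hookrightarrow C^1(\bar\Omega)\hookrightarrow W^{1,\infty}(\Omega)$ yields the assertion.

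\textbf{Main obstacle.} There is no genuine analytic difficulty; the two points deserving attention are (i) that the elliptic constants are really independent of $t$, which is automatic once one notes that $n$, $u$, $c$ enter only through the uniform bounds of Lemmas~\ref{Lpn}, \ref{uLinf}, \ref{Lpc}, and (ii) the order of the bootstrap: it is the divergence-form estimate in Step~1 that first places $\nabla c$ in $L^q$, which is precisely what is needed before $u\cdot\nabla c$ can be absorbed as an $L^q$ source in Step~2.
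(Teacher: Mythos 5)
Your argument is correct, and it reaches the conclusion by a route that differs from the paper's in how the troublesome convection term is first brought under control. The paper stays entirely with the non-divergence form $-\Delta c + c = n - u\cdot\nabla c$: it gets a uniform $W^{1,2}$ bound by testing the equation with $c$, then applies the $W^{2,3}$ elliptic estimate and absorbs $\|u\cdot\nabla c\|_{L^3}\le \|u\|_{L^\infty}\|\nabla c\|_{L^3}$ via the Gagliardo--Nirenberg interpolation $\|\nabla c\|_{L^3}\lesssim \|c\|_{W^{2,3}}^{1/2}\|\nabla c\|_{L^2}^{1/2}$ and Young's inequality, before concluding with $W^{2,3}(\Omega)\hookrightarrow W^{1,\infty}(\Omega)$. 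You instead exploit $\nabla\cdot u=0$ to write $u\cdot\nabla c=\nabla\cdot(uc)$ and invoke the $W^{1,q}$ elliptic estimate for divergence-form data together with Lemma~\ref{Lpc}, which gives $\nabla c\in L^q$ in one stroke and then lets you treat $u\cdot\nabla c$ as a plain $L^q$ source in a second elliptic step. Both routes rest on Lemmas~\ref{Lpn} and~\ref{uLinf} and on uniform-in-time constants; yours buys a cleaner bootstrap (and yields $W^{2,q}$ bounds for all $q$) at the price of a slightly less elementary ingredient, namely the duality-type bound for $(-\Delta+1)^{-1}\nabla\cdot$ on $L^q$ with Neumann data, whereas the paper needs only the standard $W^{2,q}$ estimate plus interpolation and does not use Lemma~\ref{Lpc} at all. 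Two small points you should make explicit: the rewriting $u\cdot\nabla c=\nabla\cdot(uc)$ produces no boundary contribution in the weak formulation precisely because $u=0$ on $\partial\Omega$ (so the classical solution $c$ coincides with the variational solution to which the divergence-form estimate applies), and the $W^{1,q}$ mapping property of $(-\Delta+1)^{-1}\nabla\cdot$ should be backed by a reference or a short duality argument rather than labeled merely ``standard''.
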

\begin{proof}
     Testing the second equation of \eqref{1} by $c$, using Lemma \ref{Lpn} and applying Young's inequality yields 
    \begin{align*}
        \int_\Omega |\nabla c|^2 +\int_\Omega c^2 &= \int_\Omega nc \notag \\
        &\leq \frac{1}{2}\int_\Omega n^2 + \frac{1}{2} \int_\Omega c^2,
    \end{align*}
    which leads to
    \begin{align} \label{cbdd'.1}
        \left \| c(\cdot,t) \right \|_{W^{1,2}(\Omega)} \leq c_1 \qquad \text{for all } t\in (0,T_{\rm max}),
    \end{align}
    for some $c_1>0$. Applying Lemma \ref{Lpn} with $p=3$ and the uniform boundedness of $u$ in Lemma \ref{uLinf}[\eqref{uLinf-2}], one can find $c_2>0$ and $c_3>0$ such that 
    \begin{align*}
         \left \| n(\cdot,t) \right \|_{L^3(\Omega)} \leq c_2 \qquad \text{for all }t\in (0,T_{\rm max}),
    \end{align*}
    and 
    \begin{align*}
         \left \| u(\cdot,t) \right \|_{L^\infty(\Omega)} \leq c_3 \qquad \text{for all }t\in (0,T_{\rm max}).
    \end{align*}
    Owning to $L^p$ elliptic regularity theory, \eqref{cbdd'.1}, and Gagliardo–Nirenberg interpolation inequality, we can find positive constants  $c_4, c_5$ and $c_6$ such that 
    \begin{align*} 
        \left \| c \right \|_{W^{2,3}(\Omega)} &\leq c_4 \left \| n \right \|_{L^3(\Omega)} +c_4 \left \| u \cdot \nabla c  \right \|_{L^3(\Omega)}  \notag \\
        &\leq c_2c_4 +c_3c_4  \left \| \nabla c  \right \|_{L^3(\Omega)} \notag \\
        &\leq c_2c_4 +c_3c_4c_5  \left \| c \right \|_{W^{2,3}(\Omega)}^{\frac{1}{2}} \left \| \nabla c  \right \|^{\frac{1}{2}}_{L^2(\Omega)} \notag \\
        &\leq \frac{1}{2} \left \| c \right \|_{W^{2,3}(\Omega)}  +c_6 \qquad\text{for all } t\in (0,T_{\rm max}),
    \end{align*}
    which entails that 
    \begin{align*} 
        \left \| c(\cdot,t) \right \|_{W^{2,3}(\Omega)} \leq 2c_6 \qquad \text{for all } t\in (0,T_{\rm max}).
    \end{align*}
    This, together with the Sobolev embedding $W^{2,3}(\Omega) \to W^{1, \infty}(\Omega)$ yields 
     \begin{align*} 
        \left \| c(\cdot,t) \right \|_{W^{1,\infty}(\Omega)} \leq c_7 \left \| c(\cdot,t) \right \|_{W^{2,3}(\Omega)} \leq 2c_7c_6 \qquad \text{for all } t\in (0,T_{\rm max}).
    \end{align*}
    for some $c_7>0$. The proof is now complete.
\end{proof}

With the uniform boundedness of $\nabla c$ at hand, and by employing a Moser-type iterative argument, we can now establish an $L^\infty$ bound for $n$ as follows.

\begin{lemma}\label{nbdd'}
    There exists $C>0$ such that 
    \begin{align*}
        \left \| n(\cdot,t) \right \|_{L^{ \infty}(\Omega)} \leq C \qquad \text{for all }t\in (0,T_{\rm max}).
    \end{align*}
\end{lemma}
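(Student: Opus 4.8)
The plan is to run the standard variation-of-constants argument for $n$, since every required ingredient is now available: $S$ is bounded on $[0,\infty)$ by \eqref{S}, $\nabla c$ is bounded in $L^\infty(\Omega)$ uniformly in time by Lemma~\ref{cbdd'}, $u$ is bounded in $L^\infty(\Omega)$ by Lemma~\ref{uLinf}, and $n$ is bounded in $L^q(\Omega)$ for every finite $q$ by Lemma~\ref{Lpn}. Because $\nabla\cdot u=0$, the first equation of \eqref{1} can be written as $n_t=\Delta n-\nabla\cdot F$ with $F:=nS(n)\nabla c+un$, and the boundary conditions \eqref{bdry} give $F\cdot\nu=nS(n)\,\partial_\nu c+(u\cdot\nu)\,n=0$ together with $\partial_\nu n=0$ on $\partial\Omega$, so the representation formula associated with the homogeneous Neumann heat semigroup applies; moreover the bounds just listed yield $\sup_{t\in(0,T_{\rm max})}\|F(\cdot,t)\|_{L^q(\Omega)}<\infty$ for every $q\in[1,\infty)$.

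Concretely, I would write $n(\cdot,t)=e^{t\Delta}n_0-\int_0^t e^{(t-s)\Delta}\nabla\cdot F(\cdot,s)\,ds$ and invoke the well-known smoothing estimate $\|e^{\sigma\Delta}\nabla\cdot f\|_{L^\infty(\Omega)}\le c_1\big(1+\sigma^{-1/2-1/q}\big)e^{-\lambda\sigma}\|f\|_{L^q(\Omega)}$ for the Neumann heat semigroup in two spatial dimensions, with suitable $c_1,\lambda>0$. Choosing any $q\in(2,\infty)$, say $q=3$, makes the temporal singularity $\sigma^{-1/2-1/q}$ integrable near $\sigma=0$, while the exponential factor renders $\int_0^\infty\big(1+\sigma^{-1/2-1/q}\big)e^{-\lambda\sigma}\,d\sigma$ finite; combining this with $\|e^{t\Delta}n_0\|_{L^\infty(\Omega)}\le\|n_0\|_{L^\infty(\Omega)}$ and $\sup_{s}\|F(\cdot,s)\|_{L^q(\Omega)}\le c_2$ then gives $\|n(\cdot,t)\|_{L^\infty(\Omega)}\le\|n_0\|_{L^\infty(\Omega)}+c_1c_2\int_0^\infty\big(1+\sigma^{-1/2-1/q}\big)e^{-\lambda\sigma}\,d\sigma$ for all $t\in(0,T_{\rm max})$, which is the asserted bound.

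In the spirit of the Moser-type iteration announced above, one may instead test the first equation of \eqref{1} against $n^{p-1}$ with $p=2^k$: the convective term disappears because $\nabla\cdot u=0$, the chemotactic contribution $(p-1)\int_\Omega n^{p-1}S(n)\nabla n\cdot\nabla c$ is absorbed into $\tfrac{p-1}{2}\int_\Omega n^{p-2}|\nabla n|^2$ via Young's inequality and the bound $|S(n)\nabla c|\le\|S\|_{L^\infty((0,\infty))}\|\nabla c\|_{L^\infty(\Omega)}$, and the two-dimensional Gagliardo--Nirenberg inequality applied to $v=n^{p/2}$ converts the result into a differential inequality of the form $\tfrac{d}{dt}\int_\Omega n^p+\int_\Omega n^p\le c\,p^{\gamma}\big(\int_\Omega n^{p/2}\big)^2$ for some fixed $\gamma>0$; setting $M_k:=\sup_{t\in(0,T_{\rm max})}\int_\Omega n^{2^k}$ one obtains $M_k\le\max\{\int_\Omega n_0^{2^k},\,c\,2^{\gamma k}M_{k-1}^2\}$, and iterating this bounds $M_k^{2^{-k}}$ uniformly because $\sum_k k2^{-k}<\infty$, whence $\sup_{t}\|n(\cdot,t)\|_{L^\infty(\Omega)}=\lim_k M_k^{2^{-k}}<\infty$. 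In either route no essential obstacle remains once Lemmas~\ref{Lpn}, \ref{uLinf} and~\ref{cbdd'} are in hand; the points deserving attention are, for the semigroup route, the boundary compatibility of the divergence form together with the restriction $q>N=2$ in the smoothing estimate, and, for the iteration route, checking that the constants grow at most geometrically in $2^k$ so that the recursion closes.
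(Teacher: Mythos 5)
Your proposal is correct, and in fact it contains the paper's own argument as its second route: the paper tests the first equation against $pn^{p-1}$, drops the convective term via $\nabla\cdot u=0$, absorbs the chemotactic term by Young's inequality using the boundedness of $S$ from \eqref{S} and of $\nabla c$ from Lemma~\ref{cbdd'}, arrives at $\frac{d}{dt}\int_\Omega n^p\le-\frac{p(p-1)}{2}\int_\Omega n^{p-2}|\nabla n|^2+c_1p(p-1)\int_\Omega n^p$ with $c_1$ independent of $p$, and then simply cites the standard Moser--Alikakos iteration (or Lemma A.1 of \cite{Winkler-2011}) together with the $L^p$ bounds of Lemma~\ref{Lpn}; your iteration sketch, including the recursion $M_k\le\max\{\int_\Omega n_0^{2^k},\,c\,2^{\gamma k}M_{k-1}^2\}$ and the observation that only polynomial (hence geometrically summable after taking $2^{-k}$-th roots) growth of the constants is needed, is exactly what that citation hides. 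Your first route is genuinely different from the paper: instead of iterating, you write $n_t=\Delta n-\nabla\cdot F$ with $F=nS(n)\nabla c+un$, note $\sup_t\|F(\cdot,t)\|_{L^q(\Omega)}<\infty$ for every finite $q$ by Lemmas~\ref{Lpn}, \ref{uLinf} and~\ref{cbdd'}, and conclude via the Duhamel formula and the $L^q$--$L^\infty$ smoothing estimate for $e^{\sigma\Delta}\nabla\cdot$ with $q>N=2$; the two points you flag (boundary compatibility $F\cdot\nu=0$, which indeed follows from $\partial_\nu c=0$ and $u=0$ on $\partial\Omega$, and integrability of the singularity $\sigma^{-1/2-1/q}$) are precisely the ones that need checking, and they do check out. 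The semigroup route buys a shorter, iteration-free conclusion at the price of invoking the Neumann heat semigroup machinery, while the paper's (and your second) route stays entirely within energy estimates and a black-box iteration; both rest on the same prior lemmas, so either is acceptable here.
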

\begin{proof}
      Testing the first equation of \eqref{1} by $pn^{p-1}$, applying Lemma \ref{cbdd'}, Young's inequality and noting that $S$ is bounded due to \eqref{S}, there exists $c_1>0$ independent of $p$ satisfying
    \begin{align*}
        \frac{d}{dt}\int_\Omega n^p &=-p(p-1) \int_\Omega n^{p-2}|\nabla n|^2 +p(p-1) \int_\Omega n^{p-1}S(n) \nabla n \cdot \nabla c \notag \\
        &\leq -\frac{p(p-1)}{2}) \int_\Omega n^{p-2}|\nabla n|^2 + \frac{p(p-1)}{2}\int_\Omega n^p S^2(n)|\nabla c|^2 \notag \\
        &\leq -\frac{p(p-1)}{2}) \int_\Omega n^{p-2}|\nabla n|^2  + c_1 p(p-1) \int_\Omega n^p \qquad \text{for all } t\in (0,T_{\rm max}). 
    \end{align*}
    Owning to Lemma \ref{Lpn}, we have  that $n \in L^\infty \left ((0,T_{\rm max});L^p(\Omega) \right )$ for any $p>1$. Using this and standard Moser-type iterative argument (see  \cite{Alikakos1}, \cite{Alikakos2}) or \cite{Winkler-2011}[Lemma A.1]), the lemma is thus followed. The proof is now complete.
\end{proof}

Having prepared the above Lemmas, we are now ready to present the proof of our first main result.
\begin{proof}[Proof of Theorem \ref{thm1}]
 The global solvability and boundedness feature of the solution $(n,c,u)$ \eqref{thm1-1} is an immediate consequence of Lemmas \ref{Local}[\eqref{Local.1}], \ref{nbdd'}, \ref{uLinf} and \ref{cbdd'}.
\end{proof}

\section{The Three Dimension Keller-Segel-Stokes Systems}\label{S4}
In this section, we consider the case $N = 3$ and assume that $S$ satisfies condition~\eqref{S} with $\alpha > \frac{1}{3}$. We begin by establishing some a priori estimates in the next subsection. Subsequently, we derive the cornerstone of our analysis, namely the $L^p$ estimate for $n$, and conclude with the proof of Theorem~\ref{thm2} in Subsection~\ref{S4-2}.

\subsection{A Priori Estimates} \label{S4-1}
By combining the Stokes semigroup estimates with the time-uniform boundedness of $n \nabla \phi$, we obtain the following $L^{p}$-estimate for $u$. As the argument is standard and can be found, for example, in \cite{YZ2016}, Lemma~2.5, and \cite{Winkler-2018}, Lemma~2.3, we omit the detailed proof here to avoid redundancy.
 
\begin{lemma} \label{L2}
    For any $p \in (1,3)$, there exists $C>0$ such that 
    \begin{align}
        \left \|u(\cdot,t) \right \|_{L^p(\Omega)} \leq C\qquad \text{for all }t\in (0,T_{\rm max}).
    \end{align}
\end{lemma}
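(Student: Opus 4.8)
The plan is to exploit that when $\kappa=0$ the third line of \eqref{1} becomes the linear inhomogeneous Stokes system $u_t + Au = \mathcal{P}[n\nabla\phi]$, $\nabla\cdot u = 0$, $u(\cdot,0)=u_0$, so that the variation-of-constants formula gives
\[
u(\cdot,t) = e^{-tA}u_0 + \int_0^t e^{-(t-s)A}\mathcal{P}\big[n(\cdot,s)\nabla\phi\big]\,ds, \qquad t\in(0,T_{\rm max}),
\]
and then to feed into this the only time-uniform a priori information on $n$ available at this point, namely the mass identity $\|n(\cdot,t)\|_{L^1(\Omega)} = m$ from Lemma \ref{Local}, \eqref{Local.2}, via the smoothing estimates for the Stokes semigroup.

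First I would note that, thanks to \eqref{phi} and \eqref{Local.2}, $\|n(\cdot,s)\nabla\phi\|_{L^1(\Omega)} \le m\|\nabla\phi\|_{L^\infty(\Omega)} =: c_0$ for all $s\in(0,T_{\rm max})$. Next, for fixed $p\in(1,3)$, I would invoke the known $L^1$--$L^p$ smoothing property of the Stokes semigroup together with the boundedness of $\mathcal{P}$ (see \cite{Giga1986}, \cite{Winkler+2015}): there are $c_1>0$ and $\lambda>0$ with $\|e^{-tA}\mathcal{P}f\|_{L^p(\Omega)} \le c_1 t^{-\frac{3}{2}(1-\frac1p)} e^{-\lambda t}\|f\|_{L^1(\Omega)}$ for all $t>0$ and all $f\in L^1(\Omega;\mathbb{R}^3)$. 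Applying this with $f=n(\cdot,s)\nabla\phi$ in the Duhamel formula yields
\[
\|u(\cdot,t)\|_{L^p(\Omega)} \le \|e^{-tA}u_0\|_{L^p(\Omega)} + c_0 c_1\int_0^t (t-s)^{-\frac{3}{2}(1-\frac1p)} e^{-\lambda(t-s)}\,ds.
\]
Here the first term is bounded uniformly in $t$, since by \eqref{initial} $u_0\in D(A^\beta)\hookrightarrow L^\infty(\Omega;\mathbb{R}^3)$ for $\beta\in(\tfrac34,1)$ and $(e^{-tA})_{t\ge0}$ is a bounded family on $L^p(\Omega)$; and for the second term the substitution $z=t-s$ gives the bound $c_0 c_1\int_0^\infty z^{-\frac{3}{2}(1-\frac1p)}e^{-\lambda z}\,dz$, which is finite because the exponent satisfies $\frac{3}{2}(1-\frac1p)<1$ precisely when $p<3$ (the large-$z$ part being absorbed by the exponential factor). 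Combining the two estimates produces a constant $C=C(p)$ independent of $t$.

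The only genuinely delicate point --- and the source of the restriction $p<3$ --- is the integrability near $s=t$ of the kernel $(t-s)^{-\frac{3}{2}(1-\frac1p)}$: since one only controls $n$ in $L^1(\Omega)$ at this stage, the Stokes smoothing costs $\frac{3}{2}\big(1-\frac1p\big)$ powers of time, and requiring this exponent to be strictly less than $1$ forces $p < \frac{N}{N-2} = 3$. Apart from this, the argument is entirely routine, as in \cite{YZ2016}, Lemma~2.5, and \cite{Winkler-2018}, Lemma~2.3.
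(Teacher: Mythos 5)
Your argument is exactly the one the paper has in mind: it omits the proof precisely because it is this standard combination of the Duhamel representation for the Stokes subsystem, the time-uniform $L^1$ bound $\|n(\cdot,s)\nabla\phi\|_{L^1(\Omega)}\le m\|\nabla\phi\|_{L^\infty(\Omega)}$ from \eqref{Local.2} and \eqref{phi}, and the $L^1$--$L^p$ smoothing of the Stokes semigroup with the singularity exponent $\tfrac32\bigl(1-\tfrac1p\bigr)<1$ forcing $p<3$, as in \cite{YZ2016} and \cite{Winkler-2018}. The only cosmetic caveat is that the smoothing estimate should be quoted for the composite operator $e^{-tA}\mathcal{P}$ (as in your displayed inequality), since $\mathcal{P}$ itself is not bounded on $L^1(\Omega)$; with that reading the proof is complete and coincides with the paper's intended one.
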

In preparation for the proof of Lemma~\ref{L5}, we first establish the following lemma, which provides an $L^p$ bound for $c$ with $p < 3$.

\begin{lemma} \label{L3}
        For any $p \in (1,3)$, there exists $C>0$ such that 
    \begin{align}
        \left \|c(\cdot,t) \right \|_{L^p(\Omega)} \leq C\qquad \text{for all }t\in (0,T_{\rm max}).
    \end{align}
\end{lemma}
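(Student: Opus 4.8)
The plan is to adapt almost verbatim the elliptic-testing argument already used for the two-dimensional counterpart in Lemma~\ref{Lpc}; the only new feature is that in three dimensions the Sobolev embedding $W^{1,2}(\Omega)\hookrightarrow L^6(\Omega)$ caps the attainable exponent at $p<3$, which is exactly the content of the statement. Pleasantly, no bound on the velocity field is required here (so Lemma~\ref{L2} is not invoked), because the convective term will drop out identically.

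First I would fix $p\in(1,3)$ and pick $q\in(0,1)$ close enough to $1$ that $3q>p$. Testing the second equation of~\eqref{1} against $(c+1)^{q-1}$ and integrating by parts using the no-flux condition on $c$, the key observation is that the convective contribution vanishes: writing $(c+1)^{q-1}\nabla c=\tfrac1q\nabla\big[(c+1)^q\big]$ one obtains $\int_\Omega (u\cdot\nabla c)(c+1)^{q-1}=\tfrac1q\int_\Omega u\cdot\nabla\big[(c+1)^q\big]=-\tfrac1q\int_\Omega(\nabla\cdot u)(c+1)^q=0$, by the divergence-free condition together with $u=0$ on $\partial\Omega$ (the classical regularity furnished by Lemma~\ref{Local} legitimises these manipulations). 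The Laplacian term contributes, after transfer to the left-hand side, the good term $(1-q)\int_\Omega(c+1)^{q-2}|\nabla c|^2$, while $(c+1)^{q-1}\le 1$ and $n\ge0$ give $\int_\Omega c(c+1)^{q-1}-\int_\Omega n(c+1)^{q-1}\le\int_\Omega c=m$ by~\eqref{Local.2}. Hence $\int_\Omega|\nabla(c+1)^{q/2}|^2\le c_1$ with $c_1>0$ independent of $t$.

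Next I would apply a Gagliardo--Nirenberg inequality to $w:=(c+1)^{q/2}\in W^{1,2}(\Omega)$: since $\tfrac{2p}{q}<6$, there is a suitable $\theta\in(0,1)$ with $\|w\|_{L^{2p/q}(\Omega)}\le c_2\|\nabla w\|_{L^2(\Omega)}^{\theta}\|w\|_{L^{2/q}(\Omega)}^{1-\theta}+c_2\|w\|_{L^{2/q}(\Omega)}$, and $\|w\|_{L^{2/q}(\Omega)}^{2/q}=\int_\Omega(c+1)=m+|\Omega|$ is bounded by~\eqref{Local.2}. Combining this with the previous step yields $\int_\Omega(c+1)^p\le c_3$ uniformly in $t\in(0,T_{\rm max})$, whence $\|c(\cdot,t)\|_{L^p(\Omega)}\le\|c(\cdot,t)+1\|_{L^p(\Omega)}\le c_3^{1/p}$, which is the claim. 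The only genuine constraint along the way is the three-dimensional Sobolev exponent $6$, which is precisely why $p$ cannot be pushed up to $3$; beyond that the argument is routine bookkeeping with no real obstacle.
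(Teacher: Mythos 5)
Your proposal is correct and follows essentially the same route as the paper: test the elliptic equation for $c$ with $(c+1)^{q-1}$, $q<1$ (the convective term vanishing by incompressibility), obtain a uniform bound on $\int_\Omega|\nabla(c+1)^{q/2}|^2$, and conclude via the three-dimensional Sobolev/Gagliardo--Nirenberg embedding together with the mass bound $\int_\Omega c=m$. The only (immaterial) difference is that the paper takes $q=p/3$ so the $L^6$ embedding yields $\int_\Omega(c+1)^p$ directly, whereas you take $q$ near $1$ with $3q>p$ and interpolate.
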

\begin{proof}
    Multiplying the second equation of \eqref{1} by $(c+1)^{\frac{p}{3}-1}$ and integrating by parts yields
    \begin{align*}
        \left (1 -\frac{p}{3} \right ) \int_\Omega (c+1)^{\frac{p}{3}-2} |\nabla c|^2 &= \int_\Omega c(c+1)^{\frac{p}{3}-1} - \int_\Omega n (c+1)^{\frac{p}{3}-1} \notag  \\
        &\leq \int_\Omega c\qquad \text{for all }t\in (0,T_{\rm max}).
    \end{align*}
    This, together with Lemma \ref{Local}[\eqref{Local.2}] implies that 
    \begin{align*}
        \int_\Omega |\nabla (c+1)^\frac{p}{6}|^2 \leq c_1 \qquad \text{for all }t\in (0,T_{\rm max}) ,
    \end{align*}
    for some $c_1>0$. Applying Sobolev's inequality and Lemma \ref{Local}[\eqref{Local.2}], we can find $c_2>0$ and $c_3>0$ such that
    \begin{align*}
        \int_\Omega (c+1)^p &\leq  c_2 \left (  \int_\Omega |\nabla (c+1)^\frac{p}{6}|^2 \right )^{3} + c_2\left (\int_\Omega (c+1) \right )^p \notag \\
        &\leq c_3  \qquad \text{for all }t\in (0,T_{\rm max}),
    \end{align*}
    which completes the proof.
\end{proof}

The following lemma provides us a vital estimate allowing us to approximate $c$ as $n^{\frac{1}{\lambda}}$ with $\lambda<3$.  

\begin{lemma} \label{L4}
    For $\lambda \in [1,3)$ and $p> \lambda$, there exists $C=C(p,\lambda)>0$ such that 
    \begin{align}
        \int_\Omega c^{ p} \leq C \int_\Omega n^{\frac{p}{\lambda}}+C\qquad \text{for all }t\in (0,T_{\rm max}).
    \end{align}
\end{lemma}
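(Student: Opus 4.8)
\textbf{Proof proposal for Lemma~\ref{L4}.}
The plan is to exploit elliptic regularity for the second equation of \eqref{1}, rewritten as $-\Delta c + c = n - u\cdot\nabla c$, in order to gain integrability on $c$, and then bootstrap. First I would fix $\lambda\in[1,3)$ and $p>\lambda$. The natural estimate comes from testing the second equation by a suitable power of $c$, or equivalently from elliptic $W^{2,q}$-theory: since $c$ solves $-\Delta c + c = n - u\cdot\nabla c$ with homogeneous Neumann data, for any $q\in(1,\infty)$ one has $\|c\|_{W^{2,q}(\Omega)} \le C\big(\|n\|_{L^q(\Omega)} + \|u\cdot\nabla c\|_{L^q(\Omega)}\big)$. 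The idea is to choose $q$ so that $W^{2,q}(\Omega)\hookrightarrow L^p(\Omega)$ (in three dimensions this requires $\tfrac1p \ge \tfrac1q - \tfrac23$, i.e. $q \ge \tfrac{3p}{3+2p}$), while at the same time $q \le p/\lambda$ so that the right-hand term $\|n\|_{L^q}$ is controlled by $\big(\int_\Omega n^{p/\lambda}\big)^{\lambda/p}|\Omega|^{1/q-\lambda/p}$ via H\"older. Such a $q$ exists precisely because $\lambda<3$: the constraint $\tfrac{3p}{3+2p}\le q\le \tfrac{p}{\lambda}$ is nonempty as soon as $\tfrac{3p}{3+2p}\le \tfrac p\lambda$, i.e. $\lambda\le \tfrac{3+2p}{3}$, which holds for all $p>\lambda$ when $\lambda<3$ (and a short computation confirms it; one may also simply enlarge $p$-exponents using $L^{p/\lambda}\subset L^q$ on the bounded domain).

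The remaining term $\|u\cdot\nabla c\|_{L^q(\Omega)}$ is the one that has to be absorbed. Here I would use H\"older to write $\|u\cdot\nabla c\|_{L^q} \le \|u\|_{L^{r}}\|\nabla c\|_{L^{s}}$ with $\tfrac1q=\tfrac1r+\tfrac1s$, invoke Lemma~\ref{L2} to bound $\|u\|_{L^r}$ uniformly for any $r<3$, and then control $\|\nabla c\|_{L^s}$ by interpolating between the uniform $L^{p_0}$ bound on $c$ from Lemma~\ref{L3} (for some fixed $p_0<3$, which via the $q=p_0$ elliptic estimate also gives a uniform bound on $\|\nabla c\|_{L^{s_0}}$ for a range of $s_0$) and the $W^{2,q}$ norm we are trying to estimate. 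Concretely, a Gagliardo--Nirenberg inequality of the form $\|\nabla c\|_{L^s} \le C\|c\|_{W^{2,q}}^{\theta}\|c\|_{L^{p_0}}^{1-\theta}$ with $\theta<1$ lets Young's inequality absorb $\tfrac12\|c\|_{W^{2,q}}$ into the left-hand side, provided $r$ is taken close enough to $3$ so that the complementary exponent $s$ stays in the admissible range and $\theta<1$. This is the step where the three-dimensional thresholds are tight, and it is where I expect the main obstacle to lie: one must verify that the exponents $r<3$ (from Lemma~\ref{L2}), $s$ (the Gagliardo--Nirenberg target), $q$ (the elliptic exponent) and $p_0<3$ (from Lemma~\ref{L3}) can be chosen simultaneously with $\theta<1$; the bounded-domain embeddings $L^a\subset L^b$ for $a\ge b$ give enough slack to push this through for every $p>\lambda$ with $\lambda<3$.

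Once the absorption is done, we arrive at $\|c\|_{W^{2,q}(\Omega)} \le C\big(\|n\|_{L^{p/\lambda}(\Omega)} + 1\big)$ uniformly in $t\in(0,T_{\rm max})$, and the Sobolev embedding $W^{2,q}(\Omega)\hookrightarrow L^p(\Omega)$ together with the elementary inequality $(a+b)^p\le 2^{p-1}(a^p+b^p)$ yields $\int_\Omega c^p \le C\int_\Omega n^{p/\lambda}+C$, which is the claim. I would organize the write-up as: (i) state the elliptic $W^{2,q}$ estimate and fix all exponents; (ii) estimate the drift term $u\cdot\nabla c$ via H\"older, Lemma~\ref{L2}, Lemma~\ref{L3}, and Gagliardo--Nirenberg; (iii) absorb and conclude via Sobolev embedding. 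The only genuinely delicate point is the bookkeeping of exponents in step (ii); everything else is routine.
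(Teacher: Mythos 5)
There is a genuine gap at precisely the step you flag as the main obstacle: the absorption of the drift term cannot be carried out with the information available in the paper. Your plan bounds $\|u\cdot\nabla c\|_{L^q}\le\|u\|_{L^r}\|\nabla c\|_{L^s}$ with $\frac1q=\frac1r+\frac1s$, where Lemma~\ref{L2} only gives a uniform bound on $\|u\|_{L^r}$ for $r<3$, and then seeks a Gagliardo--Nirenberg estimate $\|\nabla c\|_{L^s}\le C\|c\|_{W^{2,q}}^{\theta}\|c\|_{L^{p_0}}^{1-\theta}$ with $\theta<1$ so that Young's inequality can absorb the $W^{2,q}$ norm. In three dimensions such an interpolation inequality requires
\begin{equation*}
\frac1s\;\ge\;\frac13+\theta\Bigl(\frac1q-\frac23\Bigr)+(1-\theta)\,\frac1{p_0},
\end{equation*}
and since $p_0<3$ (Lemma~\ref{L3} gives nothing better), the right-hand side is strictly decreasing in $\theta$ and tends to $\frac1q-\frac13$ as $\theta\to 1^-$. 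Hence $\theta<1$ is admissible only if $\frac1s>\frac1q-\frac13$, which via $\frac1q=\frac1r+\frac1s$ is equivalent to $r>3$ --- exactly the range \emph{not} covered by Lemma~\ref{L2}. With $r<3$ one is forced to $\theta>1$, which is inadmissible; the bounded-domain embeddings $L^a\subset L^b$ ($a\ge b$) only lose integrability and therefore give slack in the wrong direction, and bootstrapping does not repair a supercritical drift: $u\cdot\nabla c$ is critical for this elliptic problem when $u\in L^3$ and the available $u\in L^r$, $r<3$, lies on the wrong side. So step (ii) of your outline cannot be completed as proposed.

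What the proposal misses is the structural fact the paper exploits, which makes the drift term disappear altogether: since $\nabla\cdot u=0$ and $u=0$ on $\partial\Omega$, one has $\int_\Omega (u\cdot\nabla c)\,c^{p-\lambda}=\frac{1}{p-\lambda+1}\int_\Omega u\cdot\nabla\bigl(c^{p-\lambda+1}\bigr)=0$. The paper therefore tests the second equation of \eqref{1} directly with $c^{p-\lambda}$, obtaining $(p-\lambda)\int_\Omega c^{p-\lambda-1}|\nabla c|^2+\int_\Omega c^{p-\lambda+1}=\int_\Omega n\,c^{p-\lambda}$ with no contribution from $u$ at all; it then combines this with a Gagliardo--Nirenberg inequality of the form $\int_\Omega c^{p}\le C\int_\Omega c^{p-\lambda-1}|\nabla c|^2+C$, where the interpolation endpoint is the uniform bound on $\int_\Omega c^{3(\lambda-1)/2}$ from Lemma~\ref{L3} (this is exactly where $\lambda<3$ enters), and concludes with Young's inequality $\int_\Omega n\,c^{p-\lambda}\le \varepsilon\int_\Omega c^{p}+C\int_\Omega n^{p/\lambda}$. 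If you replace the elliptic $W^{2,q}$ framework by this testing procedure, the lemma follows in a few lines; as written, your absorption argument fails on exponent grounds.
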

\begin{proof}
    Using Lemma \ref{L3} and noting that $\frac{3(\lambda-1)}{2}\in [0,3)$ when $1 \leq \lambda<3$, we can find $c_1>0$ such that 
    \begin{align*}
        \int_\Omega c^{\frac{3(\lambda-1)}{2}}(\cdot,t) \leq c_1 \qquad \text{for all }t\in (0,T_{\rm max}).
    \end{align*}
    By combining Gagliardo–Nirenberg interpolation inequality the above estimate, there exist $c_2>0$ and $c_3>0$ such that 
    \begin{align} \label{L4.1}
        \int_\Omega c^p&\leq c_2 \int_\Omega c^{p-\lambda-1}|\nabla c|^2 \left ( \int_\Omega c^{\frac{3(\lambda-1)}{2}}  \right )^{\frac{2p-3\lambda+3}{3p-3\lambda+\frac{3}{2}}} +c_2 \left ( \int_\Omega c^{\frac{3(\lambda-1)}{2}} \right )^{\frac{2p}{3(\lambda-1)}} \notag \\
        &\leq c_3 \int_\Omega c^{p-\lambda-1}|\nabla c|^2+c_3.
    \end{align}
    Multiplying the second equation of \eqref{1} by $c^{p-\lambda}$ and integrating by parts yields
    \begin{align*}
        (p-\lambda) \int_\Omega c^{p-\lambda-1}|\nabla c|^2 + \int_\Omega c^{p-\lambda+1}= \int_\Omega n c^{p-\lambda}.
    \end{align*}
    Dropping the second term, utilizing \eqref{L4.1} and applying Young's inequality, we can find $c_4>0$ such that
    \begin{align*}
        \frac{p-\lambda}{c_3}\int_\Omega c^p -p+\lambda &\leq \int_\Omega nc^{p-\lambda} \notag \\
        &\leq \frac{p-\lambda}{2c_3}\int_\Omega c^p + c_4 \int_\Omega n^{\frac{p}{\lambda}}.
    \end{align*}
    This further entails that 
    \begin{align*}
        \int_\Omega c^p \leq \frac{2c_3c_4}{p-\lambda}\int_\Omega n^{\frac{p}{\lambda}}+ 2c_3,
    \end{align*}
    which completes the proof.
\end{proof}

Building on the preliminary insight provided by the previous lemma, we can now derive the central result of our analysis.

\begin{lemma} \label{L5}
    For any $p>1$, there exists $C>0$ such that 
    \begin{align}
        \int_\Omega n^p(\cdot,t) \leq C \qquad \text{for all }t\in (0,T_{\rm max}).
    \end{align}
\end{lemma}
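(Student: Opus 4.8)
The plan is to test the first equation of \eqref{1} against $pn^{p-1}$ and, exactly as in the heuristic described in the introduction, track the functional $H(t)=\int_\Omega n^p$. Using $\nabla\cdot u=0$ to eliminate the convective term and rewriting the cross term via $\psi(\xi)=(p-1)\int_0^\xi\sigma^{p-1}S(\sigma)\,d\sigma$, integration by parts and the second equation of \eqref{1} give
\begin{align*}
\frac1p\frac{d}{dt}\int_\Omega n^p
= -(p-1)\int_\Omega n^{p-2}|\nabla n|^2
+ \int_\Omega n\psi(n) - \int_\Omega (u\cdot\nabla c)\,\psi(n),
\end{align*}
precisely as in the opening lines of the proof of Lemma~\ref{Lpn}. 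The bound $|S(\xi)|\le K_S(\xi+1)^{-\alpha}$ with $\alpha>\tfrac13$ shows $|\psi(\xi)|\le c(\xi+1)^{p-\alpha}$, so after one more integration by parts the dangerous term becomes $(p-1)\int_\Omega n^{p-1}S(n)\,c\,u\cdot\nabla n$, which Young's inequality splits into $\tfrac{p-1}{2}\int_\Omega n^{p-2}|\nabla n|^2$ plus a multiple of $\int_\Omega n^{p-2\alpha}|u|^2c^2$ (using $n^{2p-2}S^2(n)\lesssim n^{2p-2-2\alpha}$). The first half is absorbed into diffusion; the entire difficulty is in estimating $\int_\Omega n^{p-2\alpha}|u|^2c^2$, exactly the term flagged in the introduction.

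First I would fix a small parameter and choose the exponents for a triple Hölder split of $\int_\Omega n^{p-2\alpha}|u|^2c^2$. The idea is to put $|u|^2$ in $L^{q}$ for some $q<\tfrac32$ (so that $|u|^{2q}$ has a uniform bound by Lemma~\ref{L2}), $c^2$ in some $L^{s}$, and $n^{p-2\alpha}$ in the remaining conjugate exponent; then invoke Lemma~\ref{L4} with a suitable $\lambda<3$ to dominate the $c$-factor by a power of $n$. Concretely, with $\lambda$ close to $3$ one expects $\int_\Omega c^{s'}\lesssim \int_\Omega n^{s'/\lambda}+1$, turning the whole product into $\big(\int_\Omega n^{\theta}\big)^{\#}+1$ with $\theta$ slightly below $3p-6\alpha+2$ when one optimises; since $\alpha>\tfrac13$ gives $3p-6\alpha+2<3p$, this fits under the $L^{p+1}$-type term that Gagliardo--Nirenberg produces from the diffusion. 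The Gagliardo--Nirenberg step is the familiar one:
\begin{align*}
\int_\Omega n^{p+1}\le c\Big(\int_\Omega n\Big)\int_\Omega n^{p-2}|\nabla n|^2+c\Big(\int_\Omega n\Big)^{p+1}
\le c'\int_\Omega n^{p-2}|\nabla n|^2+c',
\end{align*}
and more generally any $\int_\Omega n^{\theta}$ with $\theta<p+1$ — equivalently any power strictly below $3p$ after the cube root coming from the exponent-$\tfrac13$ Hölder factor on $n$ — can be absorbed into $\varepsilon\int_\Omega n^{p-2}|\nabla n|^2 + C_\varepsilon$ using Young's inequality. Collecting everything yields $\tfrac1p H'(t)+H(t)\le C$, so Gronwall gives the claimed uniform bound.

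The main obstacle is bookkeeping the exponents so that all three Hölder constraints hold simultaneously: $q<\tfrac32$ for the velocity bound, $\lambda<3$ for Lemma~\ref{L4}, and the resulting power of $n$ strictly below $p+1$ (after accounting for any interpolation factor), with the additional freedom that $\lambda$ may be chosen as close to $3$ as needed but then $p/\lambda$ must still keep Lemma~\ref{L4} applicable, i.e.\ one needs $p>\lambda$, which forces $p$ to be large — harmless since an $L^p$ bound for all large $p$ is all that is required for the subsequent $L^\infty$ bootstrap. I expect the clean way to organise this is to first prove the lemma for all sufficiently large $p$ and then note smaller $p$ follow by Hölder's inequality and \eqref{Local.2}. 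Verifying that $\alpha>\tfrac13$ is exactly the threshold at which the optimised exponent $3p-6\alpha+2$ drops below $3p$ is the one genuinely delicate inequality, but it is elementary once the Hölder triple is pinned down.
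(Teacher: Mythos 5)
Your overall architecture is the same as the paper's proof of Lemma~\ref{L5}: test with $n^{p-1}$, substitute the elliptic equation for $\Delta c$, integrate the fluid term by parts and split it by Young into diffusion plus $\int_\Omega n^{p-2\alpha}|u|^2c^2$, then control that term by H\"older with $u$ in $L^{q'}$ for some $q'<3$ (Lemma~\ref{L2}), Lemma~\ref{L4} with $\lambda<3$, a Gagliardo--Nirenberg absorption, and Gronwall. However, the step on which everything hinges is both mis-stated and left unverified, and one displayed inequality is false in three dimensions. (i) The estimate $\int_\Omega n^{p+1}\le c\,(\int_\Omega n)\int_\Omega n^{p-2}|\nabla n|^2+c\,(\int_\Omega n)^{p+1}$ is the two-dimensional Gagliardo--Nirenberg bound (it is \eqref{Lpn.3} in the 2D Lemma~\ref{Lpn}) and fails for $\Omega\subset\mathbb{R}^3$: in 3D the diffusion term controls $\bigl(\int_\Omega n^{3p}\bigr)^{1/3}$, so a \emph{full} power $\int_\Omega n^\theta$ can be absorbed only for $\theta<p+\tfrac23$, not $\theta<p+1$; your "equivalently $\theta<3p$ after the cube root" conflates two different criteria, of which only the cube-root one is correct. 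Under your criterion $\theta<p+1$, the term $\int_\Omega n\psi(n)\lesssim\int_\Omega n^{p+1-\alpha}$ would appear absorbable for every $\alpha>0$, whereas its GN exponent is $\tfrac{3p-3\alpha}{3p-1}$, so $\alpha>\tfrac13$ is already needed there (this is \eqref{L5.9} in the paper); the wrong criterion thus hides one of the places where the hypothesis enters.

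(ii) For the main term, your expectation that the optimised power of $n$ is "slightly below $3p-6\alpha+2$" is backwards. Because $u$ is bounded only in $L^{q'}$ with $q'<3$ strictly, and Lemma~\ref{L4} requires $\lambda<3$ strictly, the exponent necessarily lands slightly \emph{above} $3p-6\alpha+2$, with outer power slightly below $\tfrac13$; whether the strict inequality $\alpha>\tfrac13$ still leaves enough room is precisely the delicate point, and it is the entire substance of the paper's argument: the explicit choices $r=\tfrac{9}{2+3\alpha}$, $\lambda=1+\tfrac{2}{r(p-2\alpha)}$ and $3<q<\min\bigl\{\tfrac{3}{(2-3\alpha)_+},\,\tfrac{3p}{p-2\alpha+2/r}\bigr\}$ reduce the absorption condition to $\alpha>\tfrac16+\tfrac1r-\tfrac1{2q}$, which is then checked. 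Your proposal defers exactly this verification ("elementary once the H\"older triple is pinned down") without pinning the triple down, so the proof is not complete as written, even though the plan is recoverable. A minor further slip: your differential identity drops the term $-\int_\Omega \psi(n)\,c$ coming from $\Delta c=u\cdot\nabla c+c-n$; since $S$, hence $\psi$, has no sign, it must be kept, though it is harmless and handled by $|\psi(\xi)|\lesssim\xi^{p-\alpha}$, Lemma~\ref{L4} and the same GN absorption, as in \eqref{L5.8}.
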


\begin{proof}
   As $\Omega$ is bounded, without loss of generality, we assume further that $p>\max \left \{2\alpha,1 \right \}$. Testing the first equation of \eqref{1} by $n^{p-1}$, together with several applications of integration by parts and using the second equation of \eqref{1}, we obtain 
    \begin{align} \label{L5.1}
        \frac{d}{dt} \int_\Omega n^p +p(p-1)\int_\Omega n^{p-2}|\nabla n|^2&= p(p-1) \int_\Omega n^{p-1}S(n) \nabla n \cdot \nabla c \notag \\
        &= p(p-1) \int_\Omega  \nabla \psi (n) \cdot \nabla c \notag \\
        &=-p(p-1) \int_\Omega \psi(n ) \Delta c \notag\\
        &=p(p-1) \int_\Omega \psi(n)n - p(p-1) \int_\Omega \psi (n) c - \int_\Omega \psi (n) u \cdot \nabla c,
    \end{align}
    where 
    \begin{align*}
        \psi (s)= \int_0^s z^{p-1}S(z) \, dz .
    \end{align*}
    Thanks to \eqref{S}, we can estimate
    \begin{align} \label{L5.1'}
        |\psi (s)|&\leq K_S \int_0^s z^{p-1}(1+z)^{-\alpha}\, dz \notag \\
        &\leq K_S \int_0^s  z^{p-\alpha-1}\, dz \notag \\
        &\leq \frac{K_S}{p-\alpha} s^{p-\alpha}.
    \end{align}
    By using divergence free condition of $u$ and Young's inequality, there exists $c_1>0$ and $c_2>0$ such that 
    \begin{align} \label{L5.2}
        - \int_\Omega \psi (n) u \cdot \nabla c &= \int_\Omega c \nabla \psi (n) \cdot u \notag \\
        &\leq c_1 \int_\Omega n^{p-\alpha-1} |\nabla n||u|c \notag \\
        &\leq \frac{p(p-1)}{2} \int_\Omega n^{p-2}|\nabla n|^2 +c_2 \int_\Omega n^{p-2\alpha} |u|^2c^2.
    \end{align}
    Employing Hölder's inequality and Lemma \ref{L2}, we deduce that 
    \begin{align} \label{L5.3}
        c_2 \int_\Omega n^{p-2\alpha} |u|^2c^2 &\leq c_2 \left ( \int_\Omega |u|^{\frac{2q}{q-1}} \right )^\frac{q-1}{q} \left ( \int_\Omega n^{q(p-2\alpha)}c^{2q} \right )^\frac{1}{q} \notag \\
        &\leq c_3\left ( \int_\Omega n^{q(p-2\alpha)}c^{2q} \right )^\frac{1}{q}, 
    \end{align}
    where $q>3$ and $c_3>0$. Now, we apply Young's inequality and Lemma \ref{L4} to obtain that 
    \begin{align} \label{L5.4}
        \int_\Omega n^{q(p-2\alpha)}c^{2q} &\leq \int_\Omega n^{q(p-2\alpha) \lambda} + \int_\Omega c^{\frac{2q\lambda}{\lambda-1}} \notag \\
        &\leq \int_\Omega n^{q(p-2\alpha) \lambda} + c_4\int_\Omega n^{\frac{2q \lambda}{r(\lambda-1)}} +c_5 ,
    \end{align}
    for some $\lambda>1$, $1<r<3$, $c_4>0$, and $c_5>0$. By fixing
     \begin{align}\label{L5.4'}
        r= \frac{9}{2+3\alpha},\qquad \text{and }\lambda = 1+\frac{2}{r(p-2\alpha)},
    \end{align}
    then we find that 
    \begin{align*}
        q(p-2\alpha) \lambda = \frac{2q\lambda}{\lambda-1}= q(p-2\alpha)+\frac{2q}{r}.
    \end{align*}
    Substituting this into \eqref{L5.4} and \eqref{L5.3}, we arrive at 
    \begin{align*}
          c_2 \int_\Omega n^{p-2\alpha} |u|^2c^2 &\leq c_3 \left ( (c_4+1) \int_\Omega n^{q(p-2\alpha)+\frac{2q}{r}} +c_5 \right )^{\frac{1}{q}} \notag \\
          &\leq c_6 \left (\int_\Omega n^{q(p-2\alpha)+\frac{2q}{r}} \right )^\frac{1}{q} +c_7,
    \end{align*}
    where $c_6=c_3(c_4+1)^{\frac{1}{q}}$ and $c_7=c_3c_5^\frac{1}{q}$. Choosing 
    \begin{equation}\label{L5.4''}
        3<q< \min \left \{ \frac{3}{(2-3\alpha)_+} , \frac{3p}{p-2\alpha+\frac{2}{r}} \right \}
    \end{equation}
   allowing us to apply Gagliardo–Nirenberg interpolation inequality, we can find positive constants $c_8$, $c_9$ and $c_{10}$ such that 
    \begin{align}\label{L5.5}
         c_6 \left (\int_\Omega n^{q(p-2\alpha)+\frac{2q}{r}} \right )^\frac{1}{q} &\leq c_8 \left ( \int_\Omega n^{p-2} |\nabla n|^2 \right )^{\frac{q(p-2\alpha)+\frac{2q}{r}-1}{pq -\frac{q}{3}}} \left ( \int_\Omega n \right )^{\frac{1- \frac{q(p-2\alpha)+\frac{2q}{r}}{3p}}{pq-\frac{q}{3}}} \notag \\
         &\quad+ c_8 \left ( \int_\Omega n \right )^{p-2\alpha+ \frac{2}{r}} \notag \\
         &\leq c_{10}\left ( \int_\Omega n^{p-2} |\nabla n|^2 \right )^{\frac{q(p-2\alpha)+\frac{2q}{r}-1}{pq -\frac{q}{3}}} +c_{10},
    \end{align}
    where we used Lemma $\ref{L1}$ in the last inequality. Noting that 
\begin{align}\label{L5.6}
    {\frac{q(p-2\alpha)+\frac{2q}{r}-1}{pq -\frac{q}{3}}} <1
\end{align}
when 
\begin{align*}
    \alpha > \frac{1}{6} + \frac{1}{r} - \frac{1}{2q},
\end{align*}  
which is true since from \eqref{L5.4'} and \eqref{L5.4''} we have
\begin{align*}
    \frac{1}{r} -\frac{1}{3} = \frac{1}{3} \left ( \alpha-\frac{1}{3} \right )
\end{align*}
and 
\begin{align*}
    \frac{1}{6}-\frac{1}{2q} < \frac{1}{2} \left ( \alpha- \frac{1}{3} \right ).
\end{align*}
Thanks to \eqref{L5.6}, we can apply Young's inequality to \eqref{L5.5} and then substitute it into \eqref{L5.3} to obtain
\begin{align}\label{L5.7}
   c_2 \int_\Omega n^{p-2\alpha} |u|^2c^2 \leq \frac{p(p-1)}{4} \int_\Omega n^{p-2}|\nabla n|^2 +c_{11},
\end{align}
where $c_{11}>0$. Using \eqref{L5.1'}, Young's inequality and Lemma \ref{L4}, we can estimate 
\begin{align}\label{L5.8}
   \int_\Omega n^p+ p(p-1) \int_\Omega \psi(n)n -p(p-1)\int_\Omega \psi(n)c &\leq\int_\Omega n^p+ c_{12} \int_\Omega n^{p-\alpha+1} + c_{12} \int_\Omega n^{p-\alpha}c \notag \\
    &\leq \int_\Omega n^p+ 2c_{12} \int_\Omega n^{p-\alpha+1}+ + c_{12} \int_\Omega c^{p-\alpha+1} \notag \\
    &\leq c_{13} \int_\Omega n^{p-\alpha+1} +c_{14}.
\end{align}
We apply Gagliardo–Nirenberg interpolation inequality and Lemma \ref{L1}, to find $c_{15}>0$ and $c_{16}>0$ such that
\begin{align*}
    c_{13}\int_\Omega n^{p-\alpha+1} &\leq  c_{15} \left ( \int_\Omega n^{p-2}|\nabla n|^2 \right ) ^{\frac{3p-3\alpha}{3p-1}} \left ( \int_\Omega n \right )^{\frac{2p+\alpha-1}{3p-1}} +c_{15} \left ( \int_\Omega n \right )^{p-\alpha+1} \notag \\
    &\leq c_{16}\left ( \int_\Omega n^{p-2}|\nabla n|^2 \right ) ^{\frac{3p-3\alpha}{3p-1}} +c_{16}.
\end{align*}
Since $\frac{3p-3\alpha}{3p-1}<1$ when $\alpha>\frac{1}{3}$, we can employ Young's inequality to this to find $c_{17}>0$ such that
\begin{align}\label{L5.9}
    c_{13}\int_\Omega n^{p-\alpha+1} &\leq  \frac{p(p-1)}{4} \int_\Omega n^{p-2}|\nabla n|^2 + c_{17}.
\end{align}
Collecting \eqref{L5.1}, \eqref{L5.7}, \eqref{L5.8}, and \eqref{L5.9}, we arrive at 
\begin{align*}
    \frac{d}{dt}\int_\Omega n^p +\int_\Omega n^p \leq c_{18} \qquad \text{for all }t\in (0,T_{\rm max}),
\end{align*}
for some $c_{18}>0$. Finally, we apply Grönwall's inequality to complete the proof.

\end{proof}

\subsection{Proof of the main result for $N=3$} \label{S4-2}
With the aid of Lemma~\ref{L5}, the regularity of $u$ can be further improved by employing standard arguments based on the regularity properties of the Stokes semigroup.

\begin{lemma} \label{ubdd}
    There exists $C>0$ such that 
    \begin{align}\label{ubdd-1}
        \left \|A^\beta u(\cdot,t) \right \|_{L^2(\Omega)} \leq  C \qquad \text{for all }t\in (0,T_{\rm max}),
    \end{align}
    and 
    \begin{align}\label{ubdd-2}
        \left \| u(\cdot,t) \right \|_{L^\infty(\Omega)} \leq  C \qquad \text{for all }t\in (0,T_{\rm max}).
    \end{align}
\end{lemma}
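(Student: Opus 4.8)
The plan is to treat the Stokes component $u$ via its variation-of-constants representation and exploit the now-available bound $\sup_{t}\|n(\cdot,t)\|_{L^p(\Omega)}\le C$ for every $p>1$ from Lemma~\ref{L5}. Since $\kappa=0$, applying the Helmholtz projection $\mathcal{P}$ to the third equation of~\eqref{1} turns it into the abstract evolution equation $u_t+Au=\mathcal{P}[n\nabla\phi]$, so that
$$u(\cdot,t)=e^{-tA}u_0+\int_0^t e^{-(t-s)A}\mathcal{P}[n(\cdot,s)\nabla\phi]\,ds.$$
Applying $A^\beta$, taking $L^2(\Omega)$ norms, and invoking the standard smoothing estimate $\|A^\beta e^{-\sigma A}\mathcal{P}f\|_{L^2(\Omega)}\le c\,\sigma^{-\beta}e^{-\lambda\sigma}\|f\|_{L^2(\Omega)}$ for the Stokes semigroup (with some $\lambda>0$ related to the first eigenvalue of $A$; see \cite{Giga1986} and \cite{Winkler+2015}) together with the boundedness of $\mathcal{P}$ on $L^2$, I would obtain
$$\|A^\beta u(\cdot,t)\|_{L^2(\Omega)}\le\|A^\beta u_0\|_{L^2(\Omega)}+c_1\int_0^t(t-s)^{-\beta}e^{-\lambda(t-s)}\|n(\cdot,s)\nabla\phi\|_{L^2(\Omega)}\,ds.$$

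Next I would bound the integrand by $\|\nabla\phi\|_{L^\infty(\Omega)}\|n(\cdot,s)\|_{L^2(\Omega)}$, which is uniformly controlled by Lemma~\ref{L5} (with $p=2$) and~\eqref{phi}, and pull it outside the integral. Since $\beta<1$, the remaining kernel is integrable, $\int_0^\infty z^{-\beta}e^{-\lambda z}\,dz<\infty$, which yields~\eqref{ubdd-1}. In contrast to the two-dimensional situation of Lemma~\ref{uLinf}, here no convective contribution $(u\cdot\nabla)u$ enters because $\kappa=0$, so the argument is strictly shorter and one does not even need to optimize the exponent $r$ of an auxiliary $L^r$-norm.

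Finally, \eqref{ubdd-2} follows from \eqref{ubdd-1} via the continuous embedding $D(A^\beta)\hookrightarrow L^\infty(\Omega;\mathbb{R}^3)$, which is available precisely because $\beta>\tfrac{3}{4}$ in three dimensions (as assumed in~\eqref{initial}); hence $\|u(\cdot,t)\|_{L^\infty(\Omega)}\le c_2\|A^\beta u(\cdot,t)\|_{L^2(\Omega)}\le C$. I do not anticipate a genuine obstacle in this lemma: the only points requiring mild care are checking that the semigroup-kernel exponent $-\beta$ lies in $(-1,0)$ (guaranteed by $\beta\in(\tfrac{3}{4},1)$) and that the embedding $D(A^\beta)\hookrightarrow L^\infty$ holds in this range of $\beta$. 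The substantive work has already been carried out in establishing the $L^p$ bound for $n$ in Lemma~\ref{L5}, and the present lemma is essentially a corollary of it combined with the Stokes smoothing theory.
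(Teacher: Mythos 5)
Your proposal is correct and coincides with the paper's own argument: both use the variation-of-constants representation for the Stokes subsystem, the smoothing estimate $\|A^\beta e^{-\sigma A}\mathcal{P}f\|_{L^2(\Omega)}\le c\,\sigma^{-\beta}e^{-\lambda\sigma}\|f\|_{L^2(\Omega)}$ together with the uniform $L^2$ bound on $n$ from Lemma~\ref{L5} and \eqref{phi}, and then the embedding $D(A^\beta)\hookrightarrow L^\infty(\Omega;\mathbb{R}^3)$ for $\beta>\tfrac{3}{4}$. No gaps; this matches the paper's proof essentially line by line.
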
 

\begin{proof}
    By using the fractional power $A^\beta $ to the Duhamel formula with Stokes subsystem of \eqref{1}
    \begin{align*}
        u(\cdot,t)= e^{-tA}u_0 + \int_0^t e^{-(t-\tau )A} \mathcal{P}(n \nabla \phi )(\cdot, \tau )\, d\tau \qquad \text{for }t\in (0,T_{\rm max}),
    \end{align*}
    we obtain 
    \begin{align*}
        \left \| A^\beta u(\cdot,t) \right \|_{L^2(\Omega)} \leq \left \| e^{-tA}A^\beta u_0 \right \|_{L^2(\Omega)} + \int_0^t \left \| A^\beta e^{-(t-\tau )A} \mathcal{P}(n \nabla \phi )(\cdot, \tau ) \right \|_{L^2(\Omega)} \, d\tau.
    \end{align*}
    Thanks to the smoothing properties of the Stokes semigroup \cite{Sohr2001} and Lemma \ref{L5}, we can find $c_1>0$ and $\lambda_1>0$ such that 
    \begin{align}
         \left \| A^\beta u(\cdot,t) \right \|_{L^2(\Omega)} &\leq \left \|A^\beta u_0 \right \|_{L^2(\Omega)} + c_1\int_0^t (t-s)^{-\beta }e^{-\lambda_1(t-s)} \left \| \mathcal{P}(n \nabla \phi )(\cdot, \tau ) \right \|_{L^2(\Omega)} \, d\tau \notag \\
         &\leq \left \|A^\beta u_0 \right \|_{L^2(\Omega)} + c_1 \left \| \nabla \phi  \right \|_{L^\infty(\Omega)} \cdot \sup_{t\in (0,T_{\rm max})} \left \{ \left \|n(\cdot ,t) \right \|_{L^2(\Omega)} \right \}  \cdot \int_0^\infty s^{-\beta }e^{-\lambda_1 s} \, ds
    \end{align}
    which implies \eqref{ubdd-1} since $u_0 \in D(A^\beta )$ and $\int_0^\infty s^{-\beta }e^{-\lambda_1 s} \, ds <\infty$. As the assumption $\beta \in \left ( \frac{3}{4},1 \right )$ ensures that $D(A^\beta) \to L^\infty (\Omega; \mathbb{R}^3 )$ \cite{Giga1981,Hen}, also entails \eqref{ubdd-2}. The proof is now complete.
\end{proof}

By utilizing the similar arguments in the proof of Lemma \ref{cbdd'}, we can establish the uniform boundedness of $\nabla c$ as follows:

\begin{lemma}\label{cbdd}
    There exists $C>0$ such that 
    \begin{align} 
         \left \| c(\cdot,t) \right \|_{W^{1,\infty}(\Omega)} \leq  C \qquad \text{for all }t\in (0,T_{\rm max}).
    \end{align}
\end{lemma}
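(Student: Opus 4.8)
The plan is to follow the proof structure already used for Lemma~\ref{cbdd'} in the two-dimensional setting, adapting the exponents to $N=3$. First I would test the second equation of \eqref{1} by $c$ and use Young's inequality together with the $L^2$ bound on $n$ from Lemma~\ref{L5} (with $p=2$) to obtain
\begin{align*}
    \int_\Omega |\nabla c|^2 + \int_\Omega c^2 = \int_\Omega nc \leq \tfrac{1}{2}\int_\Omega n^2 + \tfrac12 \int_\Omega c^2,
\end{align*}
which yields a uniform $W^{1,2}(\Omega)$ bound for $c$.

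Next I would invoke Lemma~\ref{L5} with a suitably large exponent, say $p=4$, to get $\|n(\cdot,t)\|_{L^4(\Omega)} \leq C$, and use the $L^\infty$ bound on $u$ from Lemma~\ref{ubdd}[\eqref{ubdd-2}]. Then, by the $L^p$ elliptic regularity theory applied to the second equation of \eqref{1}, written as $\Delta c = c - n + u\cdot\nabla c$, together with a Gagliardo--Nirenberg interpolation of $\|\nabla c\|_{L^4(\Omega)}$ between $\|c\|_{W^{2,4}(\Omega)}$ and $\|\nabla c\|_{L^2(\Omega)}$, one absorbs the $\|u\cdot\nabla c\|_{L^4(\Omega)} \leq \|u\|_{L^\infty(\Omega)}\|\nabla c\|_{L^4(\Omega)}$ term into the left-hand side. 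This gives a uniform bound on $\|c(\cdot,t)\|_{W^{2,4}(\Omega)}$. Since in three dimensions $W^{2,4}(\Omega) \hookrightarrow W^{1,\infty}(\Omega)$ (as $4>N=3$), the desired conclusion $\|c(\cdot,t)\|_{W^{1,\infty}(\Omega)} \leq C$ follows by Sobolev embedding.

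I expect the only mild subtlety to be the choice of integrability exponent: one needs $\|n\|_{L^q}$ with $q>N=3$ to run the elliptic bootstrap cleanly and land in a space that embeds into $W^{1,\infty}$, so taking $q=4$ (available from Lemma~\ref{L5}) is the natural choice, and the interpolation exponents then work out exactly as in the two-dimensional Lemma~\ref{cbdd'}. There is no serious obstacle here, since all the hard analytic work—the $L^p$ bound for $n$ and the $L^\infty$ bound for $u$—has already been carried out in Lemmas~\ref{L5} and~\ref{ubdd}; this lemma is a routine elliptic-regularity consequence of those estimates.
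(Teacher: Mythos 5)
Your proposal follows essentially the same route as the paper's proof: a $W^{1,2}$ bound for $c$ via testing the second equation by $c$ together with Lemma~\ref{L5}, then $L^p$ elliptic regularity with $\|n\|_{L^4}$ and $\|u\|_{L^\infty}$, absorbing the $\|u\cdot\nabla c\|_{L^4}$ term by Gagliardo--Nirenberg interpolation against $\|\nabla c\|_{L^2}$, and concluding with the embedding $W^{2,4}(\Omega)\hookrightarrow W^{1,\infty}(\Omega)$. The argument is correct and matches the paper's proof in all essential steps.
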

\begin{proof}
    Testing the second equation of \eqref{1} by $c$, using Lemma \ref{L5} and applying Young's inequality yields 
    \begin{align*}
        \int_\Omega |\nabla c|^2 +\int_\Omega c^2 &= \int_\Omega nc \notag \\
        &\leq \frac{1}{2}\int_\Omega n^2 + \frac{1}{2} \int_\Omega c^2,
    \end{align*}
    which implies that 
    \begin{align} \label{cbdd.1}
        \left \| c(\cdot,t) \right \|_{W^{1,2}(\Omega)} \leq c_1 \qquad \text{for all } t\in (0,T_{\rm max}),
    \end{align}
    for some $c_1>0$. Owing to Lemma \ref{L5} with $p=4$ and the uniform boundedness of $u$ in Lemma \ref{ubdd}, there exist $c_2>0$ and $c_3>0$ such that 
    \begin{align*}
         \left \| n(\cdot,t) \right \|_{L^4(\Omega)} \leq c_2 \qquad \text{for all }t\in (0,T_{\rm max}),
    \end{align*}
    and 
    \begin{align*}
         \left \| u(\cdot,t) \right \|_{L^\infty(\Omega)} \leq c_3 \qquad \text{for all }t\in (0,T_{\rm max}).
    \end{align*}
    We apply $L^p$ elliptic regularity theory, \eqref{cbdd.1}, and Gagliardo–Nirenberg interpolation inequality to find positive constants  $c_4, c_5$ and $c_6$ such that 
    \begin{align*} 
        \left \| c \right \|_{W^{2,4}(\Omega)} &\leq c_4 \left \| n \right \|_{L^4(\Omega)} +c_4 \left \| u \cdot \nabla c  \right \|_{L^4(\Omega)}  \notag \\
        &\leq c_2c_4 +c_3c_4  \left \| \nabla c  \right \|_{L^4(\Omega)} \notag \\
        &\leq c_2c_4 +c_3c_4c_5  \left \| c \right \|_{W^{2,4}(\Omega)}^{\frac{3}{7}} \left \| \nabla c  \right \|^{\frac{4}{7}}_{L^2(\Omega)} \notag \\
        &\leq \frac{1}{2} \left \| c \right \|_{W^{2,4}(\Omega)}  +c_6,
    \end{align*}
    which implies that 
    \begin{align*} 
        \left \| c(\cdot,t) \right \|_{W^{2,4}(\Omega)} \leq 2c_6 \qquad \text{for all } t\in (0,T_{\rm max}).
    \end{align*}
    This, together with the Sobolev embedding $W^{2,4}(\Omega) \to W^{1, \infty}(\Omega)$ yields 
     \begin{align*} 
        \left \| c(\cdot,t) \right \|_{W^{1,\infty}(\Omega)} \leq c_7 \left \| c(\cdot,t) \right \|_{W^{2,4}(\Omega)} \leq 2c_7c_6 \qquad \text{for all } t\in (0,T_{\rm max}).
    \end{align*}
    for some $c_7>0$. The proof is now complete.
    
\end{proof}

Equipped with the boundedness of $\nabla c$ established in the previous lemma, we can now apply a standard Moser iteration argument to prove the uniform boundedness of $n$ as follows.

\begin{lemma} \label{nbdd}
    There exists $C>0$ such that 
    \begin{align} \label{nbdd.1}
         \left \| n(\cdot,t) \right \|_{L^{\infty}(\Omega)} \leq  C \qquad \text{for all }t\in (0,T_{\rm max}).
    \end{align}
\end{lemma}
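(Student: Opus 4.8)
The plan is to follow verbatim the route taken for the two-dimensional counterpart in Lemma~\ref{nbdd'}: for every sufficiently large $p$ derive a differential inequality for $\int_\Omega n^p$ whose coefficients grow only polynomially in $p$, and then feed this family of inequalities, together with the uniform $L^p$ bounds already secured in Lemma~\ref{L5}, into the standard Moser-type iteration. No new analytic input beyond Lemmas~\ref{L5}, \ref{ubdd} and \ref{cbdd} should be needed.

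First I would test the first equation of \eqref{1} against $p n^{p-1}$. Since $\nabla\cdot u=0$, the transport contribution vanishes, $\int_\Omega p n^{p-1}\,u\cdot\nabla n=\int_\Omega u\cdot\nabla(n^p)=0$, leaving
\[
\frac{d}{dt}\int_\Omega n^p
= -p(p-1)\int_\Omega n^{p-2}|\nabla n|^2
+ p(p-1)\int_\Omega n^{p-1}S(n)\,\nabla n\cdot\nabla c .
\]
Next I would control the cross term by invoking the $W^{1,\infty}$-bound for $c$ from Lemma~\ref{cbdd}, the boundedness of $S$ guaranteed by \eqref{S}, and Young's inequality, obtaining a constant $c_1>0$ \emph{independent of $p$} such that
\[
p(p-1)\int_\Omega n^{p-1}S(n)\,\nabla n\cdot\nabla c
\le \frac{p(p-1)}{2}\int_\Omega n^{p-2}|\nabla n|^2
+ c_1\,p(p-1)\int_\Omega n^p ,
\]
and hence
\[
\frac{d}{dt}\int_\Omega n^p
+ \frac{p(p-1)}{2}\int_\Omega n^{p-2}|\nabla n|^2
\le c_1\,p(p-1)\int_\Omega n^p
\qquad\text{for all }t\in(0,T_{\rm max}).
\]

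Finally, since Lemma~\ref{L5} furnishes $n\in L^\infty\big((0,T_{\rm max});L^p(\Omega)\big)$ for every $p\ge1$, I would apply the standard Moser iteration procedure (as in \cite{Alikakos1}, \cite{Alikakos2}, or \cite{Winkler-2011}, Lemma~A.1) to the above differential inequality to conclude the claimed uniform $L^\infty$ bound for $n$.

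The step I expect to require the most care is not conceptually deep: one must verify that the constant appearing in the cross-term estimate does not depend on $p$, so that the recursion driving the Moser iteration actually closes; this holds because that constant is built only from $\|S\|_{L^\infty((0,\infty))}$ and $\|\nabla c\|_{L^\infty(\Omega)}$. All genuinely nontrivial work—propagating the $L^p$ bounds for $n$ through the coupling with $u$ and $c$—was already carried out in Lemmas~\ref{L5}, \ref{ubdd} and \ref{cbdd}, so the present lemma is essentially a transcription of Lemma~\ref{nbdd'} to the three-dimensional setting.
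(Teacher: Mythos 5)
Your proposal matches the paper's proof essentially verbatim: test the first equation against $pn^{p-1}$, use the divergence-free condition, bound the cross term via the boundedness of $S$ from \eqref{S} and the $W^{1,\infty}$ bound on $c$ from Lemma~\ref{cbdd} together with Young's inequality, and then invoke the standard Moser iteration using the uniform $L^p$ bounds from Lemma~\ref{L5}. The argument is correct and no gaps are present.
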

\begin{proof}
    Multiplying the first equation of \eqref{1} by $pn^{p-1}$, applying Lemma \ref{cbdd}, Young's inequality and noting that $S(n ) \leq K_S$, we can find $c_1>0$ such that 
    \begin{align*}
        \frac{d}{dt}\int_\Omega n^p &=-p(p-1) \int_\Omega n^{p-2}|\nabla n|^2 +p(p-1) \int_\Omega n^{p-1}S(n) \nabla n \cdot \nabla c \notag \\
        &\leq -\frac{p(p-1)}{2}) \int_\Omega n^{p-2}|\nabla n|^2 + \frac{p(p-1)}{2}\int_\Omega n^p S^2(n)|\nabla c|^2 \notag \\
        &\leq -\frac{p(p-1)}{2}) \int_\Omega n^{p-2}|\nabla n|^2  +c_1 p(p-1)\int_\Omega n^p \qquad \text{for all } t\in (0,T_{\rm max}). 
    \end{align*}
    Since $n \in L^\infty \left ((0,T_{\rm max});L^p(\Omega) \right )$ for any $p>1$, we can use Moser-type iterative argument (see  \cite{Alikakos1}, \cite{Alikakos2}) or \cite{Winkler-2011}[Lemma A.1]) to prove \eqref{nbdd.1}. The proof is now complete.
\end{proof}

We are now in position to prove the second main result.
\begin{proof}[Proof of Theorem \ref{thm2}]
    By combining Lemmas \ref{nbdd}, \ref{cbdd}, \ref{ubdd} and \ref{Local}[\eqref{Local.1}], it follows that $T_{\rm max}= \infty$ and 
    \begin{align*}
        \sup_{t>0} \left \{\left \| n(\cdot,t) \right \|_{L^\infty(\Omega)} +    \left \| c(\cdot,t) \right \|_{W^{1,\infty}(\Omega)} +  \left \| u(\cdot,t) \right \|_{L^\infty(\Omega)}  \right \} < \infty,
    \end{align*}
    which implies \eqref{thm2-1} and thereby concludes the proof.
\end{proof}

\paragraph{Acknowledgments} Minh Le was supported by the Hangzhou Postdoctoral Research Grant (No.~207010136582503).

     \paragraph{Data Availability}
 Data sharing not applicable to this article as no datasets were generated or analyzed during
the current study.

\section*{Conflict of Interest}

The author declares that there is no conflict of interest.

\end{document}